\newtheorem{remark}{Remark}
\newtheorem{example}{Example}
\newtheorem{theorem}{Theorem}
\newtheorem{lemma}{Lemma}
\newtheorem{corollary}{Corollary}
\newtheorem{assumption}{Assumption}
\newtheorem{problem}{Problem}
\newtheorem{definition}{Definition}
\newcommand\R{\mathbb{R}}
\newcommand\pr{\operatorname{Pr}}
\newcommand\E{\mathbb{E}}
\newcommand\cov{\operatorname{Cov}}
\newcommand\dd{\mathrm{d}}
\newcommand\tr{\operatorname{tr}}
\begin{document}
\title{\LARGE \bf Robust Probabilistic Prediction for Stochastic Dynamical Systems}
\author{Tao Xu and Jianping He
   \thanks{The authors are with the Department of Automation, Shanghai Jiao Tong University, and Key Laboratory of System Control and Information Processing, Ministry of Education of China, Shanghai 200240, China. E-mail: \{Zerken, jphe\}@sjtu.edu.cn.}}

\maketitle

\begin{abstract}
   It is critical and challenging to design robust predictors for stochastic dynamical systems (SDSs) with uncertainty quantification (UQ) in the prediction. Specifically, robustness guarantees the worst-case performance when the predictor's information set of the system is inadequate, and UQ characterizes how confident the predictor is about the predictions. However, it is difficult for traditional robust predictors to provide robust UQ because they were designed to robustify the performance of point predictions.
   In this paper, we investigate how to robustify the probabilistic prediction for SDS, which can inherently provide robust distributional UQ. To characterize the performance of probabilistic predictors, we generalize the concept of likelihood function to likelihood functional, and prove that this metric is a proper scoring rule. Based on this metric, we propose a framework to quantify when the predictor is robust and analyze how the information set affects the robustness. Our framework makes it possible to design robust probabilistic predictors by solving functional optimization problems concerning different information sets. In particular, we design a class of moment-based optimal robust probabilistic predictors and provide a practical Kalman-filter-based algorithm for implementation. Extensive numerical simulations are provided to elaborate on our results.
\end{abstract}

\begin{IEEEkeywords}
   Stochastic Dynamical System, Robust Prediction, Uncertainty Quantification, Probabilistic Prediction.
\end{IEEEkeywords}

\section{Introduction}
\subsection{Background}
Stochastic dynamical systems (SDSs) play a critical role in deepening our comprehension of the changing world full of uncertainties. Within the analysis of SDS, there is a significant need to predict the system outputs, which is crucial across various fields, including climate science, robotics, and finance. When the predictor's information set of the system is inadequate, designing robust predictors helps to guarantee the worst-case prediction performance.

In addition to robustness, it attracts increasing attention to provide uncertainty quantification (UQ) for the prediction. Because the prediction serves as a fundamental basis for many subsequent algorithms, a UQ of high quality can provide more side information to improve their performances. For example, a popular line of recent research incorporates predictions in the design of algorithms such as online learning \cite{khodakLearningPredictionsAlgorithms2022}, smart optimization \cite{elmachtoubSmartPredictThen2022}, and online optimal control \cite{liOnlineOptimalControl2019}.

To ensure a predictor performs well regardless of its inadequate information set of an SDS, tremendous efforts have been made to robustify the Kalman filter \cite{bertsekasRecursiveStateEstimation1971,xieRobustKalmanFiltering1994,hassibiLinearEstimationKrein1996,shenGameTheoryApproach1997,karlgaardHuberBasedDividedDifference2007,gandhiRobustKalmanFilter2010,wangRobustInformationFilter2016,chenMaximumCorrentropyKalman2017a, huangNovelOutlierRobustKalman2021,shafieezadehabadehWassersteinDistributionallyRobust2018,wangRobustStateEstimation2021,wangDistributionallyRobustState2022a}. However, it is difficult for traditional robust predictors to simultaneously provide robust UQ because they were originally designed to robustify the performance of point predictions. Moreover, the existing UQs for the SDS predictor usually need relatively strong information about the system dynamics \cite{spallAsymptoticDistributionTheory1984,spallKantorovichInequalityError1995,maryakUncertaintiesRecursiveEstimators1995a,maryakUseKalmanFilter2004a,branickiQuantifyingBayesianFilter2014}, thus incurring a trade-off between robustness and the quality of UQ. To ensure the robustness of prediction and UQ simultaneously, we ask:
\textit{How to design robust predictors for SDS with UQ?}

\subsection{Motivations}
To design robust predictors for SDS with UQ, a direct idea is ``first-robust-then-UQ'' based on previous works. That is, a robust predictor is first used to make a point prediction, and then UQ is provided. However, traditional robust predictors were originally designed for point prediction, and most of them hold relatively stringent assumptions on the posterior distributions (e.g., the first two moments are finite). Therefore, the quality and robustness of UQ cannot be guaranteed.

Our method is to think in reverse: ``first-UQ-then-robust''. Specifically, we consider probabilistic prediction for SDS and then robustify it. Probabilistic prediction is of significant importance in various fields. For example, in transportation planning and management, a probabilistic predictor can provide probability distribution of traffic conditions at different times and locations, thus capturing different modes of behavior \cite{knittelDiPAProbabilisticMultiModal2023,huangMultimodalTrajectoryPrediction,fengReviewComparativeStudy2022}. This idea works because a probabilistic predictor can inherently provide UQ by predicting distributions rather than a single point \cite{robertsProbabilisticPrediction1965b,gneitingEditorialProbabilisticForecasting2008,buizzaValueProbabilisticPrediction2008,gneitingProbabilisticForecasting2014}, and the robustness of UQ can also be guaranteed. Then, our problem can be further specified as:
\textit{How to design robust probabilistic predictors for SDS?}

\subsection{Challenges}
To design robust probabilistic predictors for SDS, we face some new challenges.

First, before trying to guarantee the worst-case performance, a metric that measures the performance of a probabilistic predictor needs to be specified. Theoretically, this metric should be a proper scoring rule \cite{gneitingProbabilisticForecasting2014} that assesses calibration and sharpness simultaneously. Practically, this metric should be a local scoring rule \cite{gneitingStrictlyProperScoring2007a} (i.e., depends only on the predictive distribution and the realized observation), thus can be calculated without the need to know the ground-truth distribution. Additionally, since predicting the trajectory of SDS is an online algorithm, the metric should also support an easy online implementation.

Second, the meaning of robustness should be specified in the context of probabilistic prediction. The performance of a probabilistic predictor is deeply affected by the information set. If the information about the system is too strong, it may be too optimistic about some trajectories. Consequently, a robust predictor designed based on this information is no longer robust. For example, inappropriately assuming a heavy-tailed noise to be Gaussian will lead to significant performance degradation under the classical settings of a Kalman filter. If the information set is too weak, poor performance happens because the predictor takes those trajectories with very small possibilities into consideration.

Third, a real-world probabilistic predictor's information set of an SDS can be very restrictive, subjective and time-variant. For example, it may only know the value of some lower-order moments or the support of these distributions rather than the probability density function. Even worse, the predictor may just have a subjective belief that the moments under a certain order exist rather than knowing their exact values. Since both the knowledge and belief of $\Phi$ can be updated as more outputs are generated and observed, the information set is time-variant.

\subsection{Contributions}
The contributions of this paper are as follows:
\begin{itemize}
   \item We propose a metric that is both theoretically proper and practically implementable to measure the performance of a probabilistic predictor for SDS. Theoretically, it generalizes the concept of likelihood function to likelihood functional, and is proved to be a proper scoring rule. Practically, it measures the log-likelihood that a trajectory can be predicted by a probabilistic predictor, and can be easily updated online.
   \item We propose a functional-optimization-based framework to quantify when a probabilistic predictor is robust. Based on this framework, a paradigm for designing robust probabilistic predictors is provided. Then we analyze how the restrictiveness and subjectiveness of a predictor's information set of an SDS affect the robustness.
   \item We design a class of moment-based robust probabilistic predictors when the information set is restricted to the moment knowledge. Moreover, we derive their optimal form concerning different information sets. Finally, we implement a moment-based robust online probabilistic predictor based on the Kalman filter, which can adaptively adjust its information set.
\end{itemize}

The remainder of this paper is organized as follows. Section II introduces the related works. Section III introduces some preliminaries on moment and entropy, then formulates the problem of interest.  Sec. IV defines the log-likelihood functional and verifies the optimality condition. Sec. V proposes a framework to define what is a robust probabilistic predictor, derives the necessary conditions for a class of moment-based robust probabilistic predictors and solves their optimal forms. Based on this framework, Sec. VI implements a complete moment-based robust online probabilistic predictor by integrating the Kalman filter. Sec. VII shows simulation results and analysis. Sec. VIII presents concluding remarks.

\section{Related Works}
Within the prediction research of SDS, there has been extensive research on designing robust predictors and facilitating predictors with UQ. This section gives a brief overview.
\paragraph{Robust predictor}
When the predictor's information set about the system is inadequate, robustness is needed to guarantee the worst-case performance. Therefore, a prior assumption that each robust predictor should declare is the content of its information set. Classified by the types of information set, there are parameter-robust predictors \cite{bertsekasRecursiveStateEstimation1971,xieRobustKalmanFiltering1994,hassibiLinearEstimationKrein1996,shenGameTheoryApproach1997}, outlier-robust predictors \cite{karlgaardHuberBasedDividedDifference2007,gandhiRobustKalmanFilter2010,wangRobustInformationFilter2016,chenMaximumCorrentropyKalman2017a, huangNovelOutlierRobustKalman2021}, distributionally-robust predictors \cite{shafieezadehabadehWassersteinDistributionallyRobust2018,wangRobustStateEstimation2021,wangDistributionallyRobustState2022a}, etc. Since robust predictors were originally designed to guarantee the performance of point prediction rather than the performance of probabilistic prediction, the information sets under consideration are relatively strong in the context of probabilistic distribution. For example, nearly all of the robust predictors assume the existence of expectation, which is utilized as the predicted output. However, many heavy-tailed distributions cannot guarantee the existence of expectation, e.g., the Cauchy distribution.

\paragraph{Prediction with UQ}
One of the most frequently used UQs for an SDS predictor is the covariance of the prediction error, which usually requires very strong assumptions on the system, e.g., linear dynamics and Gaussian noises. If the system is nonlinear and non-Gaussian, the covariances of prediction errors usually do not have explicit expressions \cite{maryakUseKalmanFilter2004a}. The uncertainty in prediction error can be quantified from other different perspectives, e.g., a scalar ratio error setting \cite{heffesEffectErroneousModels1966}, convergence and divergence analysis \cite{sangsuk-iamAnalysisDiscretetimeKalman1990}, ordering and relative closeness for three mean square error (MSE) based metrics \cite{gePerformanceAnalysisKalman2016}, to name a few.
Apart from exactly characterizing the prediction error, many works contributed by approximating the prediction error with probabilistic inequalities \cite{maryakUseKalmanFilter2004a,spallKantorovichInequalityError1995,weiUncertaintyQuantificationExtended2022}. Another group of work utilizes the asymptotic Gaussian assumption for further asymptotic characterization \cite{maryakUncertaintiesRecursiveEstimators1995a,spallAsymptoticDistributionTheory1984,alievEvaluationConvergenceRate1999a}.
All these point predictors with probabilistic UQs have motivated the ideas of probabilistic interval prediction and probabilistic prediction, which are the most popular techniques for uncertainty quantification \cite{abdarReviewUncertaintyQuantification2021}. A probabilistic interval predictor predicts the outcome by an interval with high probability, see \cite{bravoCombinedStochasticDeterministic2015,carnereroProbabilisticIntervalPredictor2022,mirasierraPredictionErrorQuantification2022} and references therein.
To provide further information for the prediction, a natural extension of the probabilistic interval predictor is the probabilistic predictor. Because the posterior distributions of SDS are typically intractable to have explicit expression, it is challenging to provide probabilistic distributional UQ for SDS. Approximated Bayesian inference methods such as variational Bayesian inference \cite{chappellVariationalBayesianInference2009,daunizeauVariationalBayesianIdentification2009,smidlVariationalBayesianFiltering2008,liRobustVariationalBasedKalman2021} and sequential Monte Carlo methods \cite{djuricParticleFiltering2003,doucetTutorialParticleFiltering2009,arulampalamTutorialParticleFilters2002,gustafssonParticleFilterTheory2010} can approximate the posterior state distributions. However, since these methods were originally developed for point prediction, the approximated distributions provided by them cannot guarantee the quality of UQ. Another line of probabilistic predictors is motivated by the safety certification \cite{wabersichProbabilisticModelPredictive2022a} requirements of stochastic model predictive control (SMPC), and \cite{landgrafProbabilisticPredictionMethods2023} has provided a thorough review for these predictors. Nevertheless, these probabilistic predictors can only be applied to the SDSs that are perfectly observed.

\begin{table}[t]
   \centering
   \caption{Comparison of Works on Predictors for SDS}
   \label{tb:related-works}
   \resizebox{\linewidth}{!}{
      \begin{tabular}{cccccc}
         \toprule
         \textbf{Works}                                                     & \cite{gePerformanceAnalysisKalman2016}               & \cite{maryakUseKalmanFilter2004a}                          & \cite{carnereroProbabilisticIntervalPredictor2022}         & \cite{landgrafProbabilisticPredictionMethods2023}              & this work                                                      \\
         \midrule
         \textbf{\begin{tabular}{c} Partially \\ Observable \end{tabular}}  & \Checkmark                                           & \Checkmark                                                 & \Checkmark                                                 & \XSolidBrush                                                   & \Checkmark                                                     \\
         \midrule
         \textbf{\begin{tabular}{c} Type of \\ Predictor \end{tabular}}     & \begin{tabular}{c} point \\ prediction \end{tabular} & \begin{tabular}{c} point \\ prediction \end{tabular}       & \begin{tabular}{c} probabilistic \\ interval \end{tabular} & \begin{tabular}{c} probabilistic \\ distribution \end{tabular} & \begin{tabular}{c} probabilistic \\ distribution \end{tabular} \\
         \midrule
         \textbf{\begin{tabular}{c} Prediction \\ Robustness \end{tabular}} & \XSolidBrush                                         & \Checkmark                                                 & \Checkmark                                                 & \Checkmark                                                     & \Checkmark                                                     \\
         \midrule
         \textbf{\begin{tabular}{c} Type of \\ UQ \end{tabular}}            & MSE                                                  & \begin{tabular}{c} probabilistic \\ interval \end{tabular} & \begin{tabular}{c} probabilistic \\ interval \end{tabular} & \begin{tabular}{c} probabilistic \\ distribution \end{tabular} & \begin{tabular}{c} probabilistic \\ distribution \end{tabular} \\
         \midrule
         \textbf{\begin{tabular}{c} UQ \\ Robustness \end{tabular}}         & \Checkmark                                           & \XSolidBrush                                               & \XSolidBrush                                               & \XSolidBrush                                                   & \Checkmark                                                     \\
         \bottomrule
      \end{tabular}}
\end{table}

\section{Preliminaries and Problem Formulation}
\subsection{Preliminaries and Notations}
\subsubsection{Random Vector and Moment}
In this paper, we use bold letters to distinguish random vectors from constant vectors. Let $\mathbf{x}\in \R^d$ be a random vector with probability density function (pdf) $p_\mathbf{x}(\cdot)$. Given $\alpha \in \R_+^d$, the $\alpha$-moment of $\mathbf{x}$ is defined as \[\mu_\alpha(\mathbf{x}) := \int s^\alpha p_\mathbf{x}(s) \dd s,\] where
$s^\alpha = \prod_{i=1}^{d} (s^{(i)})^{\alpha_i}$, and the superscript $(i)$ denotes the $i$th element of $s$.
The order of an $\alpha$-moment, denoted as $|\alpha|$, is the sum of all $\alpha^{(i)}$ such that $|\alpha| = \sum_{i=1}^{d}\alpha^{(i)}$. For example, the expectation of $\mathbf{x}$ is expressed as a vector containing all the first-order moments,
\begin{equation*}
   \begin{aligned}
      \E (\mathbf{x}) = & \begin{bmatrix}
                             \mu_{e_1}(\mathbf{x}) & \mu_{e_2}(\mathbf{x}) & \cdots & \mu_{e_d}(\mathbf{x})
                          \end{bmatrix}^\top,
   \end{aligned}
\end{equation*}
and the covariance of $\mathbf{x}$ is a matrix containing all the second-order moments,
\begin{equation*}
   \begin{aligned}
      \cov (\mathbf{x}) = & \begin{bmatrix*}
                               \mu_{e_1 + e_1}(\mathbf{x})&\mu_{e_1 + e_2}(\mathbf{x})&\cdots & \mu_{e_1 + e_d}(\mathbf{x})\\
                               \vdots &\vdots&&\vdots \\
                               \mu_{e_d + e_1}(\mathbf{x})&\mu_{e_d + e_2}(\mathbf{x})&\cdots & \mu_{e_d + e_d}(\mathbf{x})
                            \end{bmatrix*},
   \end{aligned}
\end{equation*}
where $e_i\in\R^d$ is a unit vector with the $i$-the element equals $1$.
For the convenience and unity of notation, we denote $\E \mathbf{x}$ and $\cov \mathbf{x}$ as $\mu_1(\mathbf{x})$ and $\mu_2(\mathbf{x})$ respectively.

Some distributions can be uniquely determined by a finite order of moments, e.g., Gaussian distribution can be uniquely determined by the first two moments.
The more orders of moments are known, the more accurately the distribution of $\mathbf{x}$ can be characterized. However, it is not always possible to describe a distribution by moments, e.g., when the order of a Student's t distribution is smaller than $2$, no covariance exists; when the order is smaller than $1$, even no expectation exists.


\subsubsection{Entropy}
The differential entropy of a random variable $\mathbf{x}$ with support $\mathcal{X}$ and pdf $p_\mathbf{x}(x)$ is,
$$
   \mathrm{H}(\mathbf{x}):=-\int_{x \in \mathcal{X}} p_\mathbf{x}(x) \log p_\mathbf{x}(x) \dd x .
$$
We denote a sequence as $(\cdot)_{1:k}:= (\cdot)_1, (\cdot)_2, \ldots, (\cdot)_k$.
Let $\mathbf{x}_{1:2}$ be a pair of random variables with the joint pdf $p_{\mathbf{x}_{1:2}}\left(x_{1:2}\right)$ and the support $\mathcal{X} \times \mathcal{X}$. The joint entropy of $\mathbf{x}_{1:2}$ is
$$
   \mathrm{H}\left(\mathbf{x}_{1:2}\right):=\int_{x_1 \in \mathcal{X}}\! \int_{x_2 \in \mathcal{X}} p_{\mathbf{x}_{1:2}}\!\left(x_{1:2}\right) \log p_{\mathbf{x}_{1:2}}\!\left(x_{1:2}\right) \dd x_1 \dd x_2.
$$
The KL-divergence measures how much distant $\mathbf{x}_2$ diverges away from $\mathbf{x}_1$, i.e.,
\[D_{KL}(\mathbf{x}_1 \| \mathbf{x}_2):=\int_{x\in\mathcal{X}} p_{\mathbf{x}_1}(x) \log \left(\frac{p_{\mathbf{x}_1}(x)}{p_{\mathbf{x}_2}(x)}\right) \dd x.\]

\subsubsection{Probabilistic Prediction and Proper Scoring Rules}
A probabilistic prediction is to predict a random vector $\textbf{y}$ with pdf $p_\mathbf{y} \in \mathcal{H}$ by a pdf $\hat{p}_{\mathbf{y}} \in \mathcal{H}$, where $\mathcal{H}$ is the prediction space.
A scoring rule assigns a numerical score $\mathrm{S}(\hat{p}_{\mathbf{y}}, y)$ to each pair $(\hat{p}_{\mathbf{y}}, y)$, where $y$ is a realized outcome of $\textbf{y}$. It is a local scoring rule if it depends on the predictive distribution only through its value at the event y that realizes.
We write the expected value of a scoring rule as
\begin{equation*}
   \mathrm{S}(\hat{p}_{\mathbf{y}}, p_{\mathbf{y}}):=\E_{y} \mathrm{S}(\hat{p}_{\mathbf{y}}, y).
\end{equation*}
A scoring rule $\mathrm{S}$ is proper under the prediction space $\mathcal{H}$ if
\begin{equation}\label{eq:proper_scoring_rule}
   \mathrm{S}(\hat{p}_{\mathbf{y}}, p_{\mathbf{y}}) \geq \mathrm{S}(p_{\mathbf{y}}, p_{\mathbf{y}})
\end{equation}
holds for all $\hat{p}_{\mathbf{y}}, p_{\mathbf{y}} \in \mathcal{H}$. It is strictly proper if and only if equation \eqref{eq:proper_scoring_rule} holds when $\hat{p}_{\mathbf{y}}= p_{\mathbf{y}}$.

\subsection{System Dynamic}
Consider a class of discrete-time nonlinear stochastic dynamical systems,
\begin{equation}\label{eq:sds}
   \Phi: \left\{\begin{aligned}
      \mathbf{x}_{k} & = f(\mathbf{x}_{k-1}, \mathbf{u}_{k-1}) + \mathbf{w}_{k-1} \\
      \mathbf{y}_k   & = h(\mathbf{x}_k) + \mathbf{v}_k,
   \end{aligned}\right.
\end{equation}
where $\mathbf{x}_k\in\R^{d_x}$ is the system state vector, $\mathbf{u}_k\in\R^{d_u}$ is the control input, and $\mathbf{w}_k\in\R^{d_x}$ is the independent process noises.
$\mathbf{y}_k\in\R^{d_y}$ is the observation vector of $x_k$, $\mathbf{v}_k\in\R^{d_y}$ is the independent observation noise, and there is no cross-correlation among $\mathbf{w}_k, \mathbf{v}_k$ and $\mathbf{u}_k$. The initial state $\mathbf{x}_0$ is also independent with $\mathbf{w}_k, \mathbf{v}_k$ and $\mathbf{u}_k$.

\begin{assumption}
   The system dynamics $f: \R^{d_x} \to \R^{d_x}, h: \R^{d_x}\to d_y$, and the distributions of $\mathbf{w}_{1:n}, \mathbf{v}_{1:n}, \mathbf{u}_{1:n}, \mathbf{x}_0$ may be unknown to the predictor.
\end{assumption}

\subsection{Problem in Interests}
Suppose a probabilistic predictor keeps observing the trajectory generated from an SDS.
At time step $k$, the trajectory $y_{1:k-1}$ is observed, and the next output $y_k$ will be generated from a conditional distribution $p_{\mathbf{y}_k \mid \mathbf{y}_{1:k-1}}(\cdot \mid y_{1:k-1})$. A one-step probabilistic predictor $\mathscr{F}_k$ possesses a information set $\mathcal{I}$ of the system,
\begin{equation*}
   \mathcal{I} := \left\{\text{estimations of }f, h; \text{ features of }\mathbf{w}_{1:n}, \mathbf{v}_{1:n}, \mathbf{u}_{1:n}, \mathbf{x}_0 \right\},
\end{equation*}
where the estimations of $f, h$ may differ from the ground truth, and the features of those random vectors may include their moments, quantiles, supports, etc.
It aims to predict the conditional distribution based on previous observations $y_{1:k-1}$ and the information set $\mathcal{I}$:
\begin{equation}
   \mathscr{F}_k(y_{1:k-1}; \mathcal{I}) = \hat{p}_{\mathbf{y}_k \mid \mathbf{y}_{1:k-1}}(\cdot \mid y_{1:k-1}).
\end{equation}
After the system $\Phi$ generates $y_k$ from the conditional distribution, the prediction performance is evaluated, the observed trajectory and the information set are updated and the next round of prediction continues. This recursive procedure is visualized in Fig. \ref{fig:problem}.

The prior tool for analyzing an online probabilistic prediction is a proper performance metric.
\begin{problem}
Measure the prediction performance of an online probabilistic predictor by a metric that i) is a proper local scoring rule and ii) can be easily implemented online.
\end{problem}

When $\mathcal{I}$ is adequate (i.e., the pdfs of $\mathbf{w}_{1:n}, \mathbf{v}_{1:n}, \mathbf{u}_{1:n}, \mathbf{x}_0$ are known and the estimations of $f,h$ are precise), the conditional distributions can be uniquely determined. However, the information set in practice can be:
\begin{itemize}
   \item restrictive: knowing the value of some lower-order moments or the support of these distributions rather than the probability density function.
   \item subjective: a subjective belief that the moments with a certain order exist rather than knowing their exact values.
   \item time-variant: the knowledge of $\Phi$ can be updated as more outputs are observed and analyzed.
\end{itemize}
To guarantee the worst-case prediction performance against the above-listed problems, a robust predictor is needed.
\begin{problem}
Quantitatively specify the meaning of a robust probabilistic predictor and analyze how the restrictiveness and subjectiveness of the information set affect the robustness.
\end{problem}

Finally, we are interested in designing implementable robust probabilistic predictors with optimal performances.
\begin{problem}
Given different kinds of information sets, design robust probabilistic predictors and optimize their performances without violating the robustness.
\end{problem}

\begin{figure}[t]
   \centering
   \captionsetup{justification=centering}
   \includegraphics[width=\linewidth]{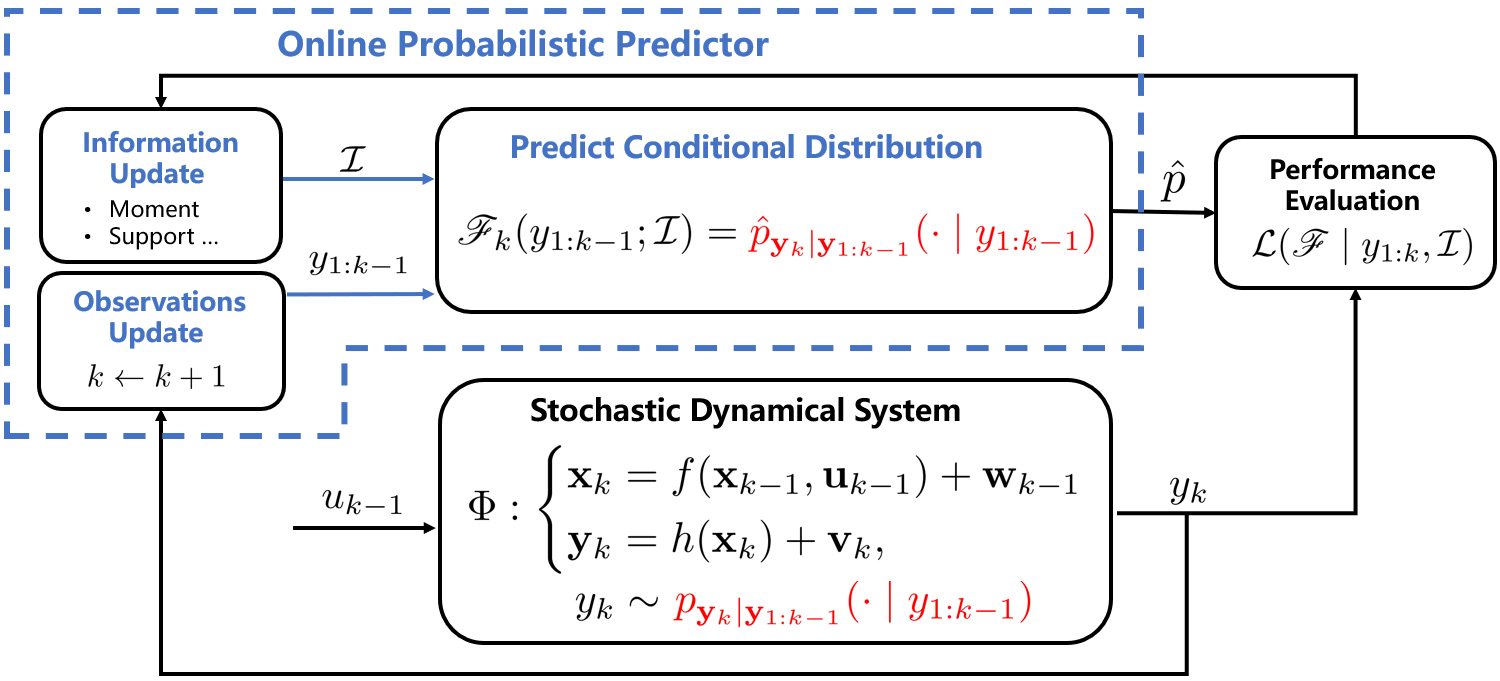}
   \caption{Online Probabilistic Predictor $\mathscr{F}$ for an SDS $\Phi$}
   \label{fig:problem}
\end{figure}

\section{Prediction Performance: Log-likelihood Functional Analysis}
In this section, we propose a metric to measure the prediction performance of a probabilistic predictor for SDS. To begin with, we define the metric by generalizing the idea of classical log-likelihood functions to log-likelihood functionals. Then, we provide a formal evaluation for the expected log-likelihood functional. Based on the evaluation, this metric is verified to be a proper scoring rule such that the metric is optimized when the predictive distributions equal the ground truth. It is also strictly proper in the Lebesgue measure sense. Nevertheless, we point out that an optimal performance is impossible when the predictor's information set is inadequate. Even worse, unrobust utilizing the information set is dangerous in unboundedly decreasing the performance. We provide an example to recognize this danger.

\subsection{Metric: Log-likelihood Functional}
When a $y_k$ is generated from the conditional distribution $p_{\mathbf{y}_k \mid \mathbf{y}_{1:k-1}}$, it is predicted by $\hat{p}_{\mathbf{y}_k \mid \mathbf{y}_{1:k-1}}$. To measure the prediction performance for this one-step probabilistic prediction, existing proper scoring rules are all theoretically acceptable. However, as the predictor's information set is inadequate, the real conditional distribution cannot be uniquely determined. Therefore, our metric should be local, i.e., it can evaluate the prediction performance based only through $\hat{p}_{\mathbf{y}_k \mid \mathbf{y}_{1:k-1}}$ and $y_k$ without knowing $p_{\mathbf{y}_k \mid \mathbf{y}_{1:k-1}}$. It can be shown that the log score is the only proper scoring rule that is local \cite{gneitingStrictlyProperScoring2007a,dawidTheoryApplicationsProper2014}, which characterizes the log-likelihood that $y_k$ is generated from $\hat{p}_{\mathbf{y}_k \mid \mathbf{y}_{1:k-1}}$. Naturally, we should define the prediction performance of a probabilistic predictor $\mathscr{F}$ on a trajectory as the likelihood that this trajectory can be generated from $\mathscr{F}$.

A standard likelihood function is of the form $\mathcal{L}(\theta | o_{1:n})$, where $o_{1:n}$ is the observations generated from some statistical model that can be parametrized by a vector $\theta$.
However, since the set containing all the online predictors is a functional space that may not be parametrized, we should generalize the idea of likelihood function to the likelihood functional.

\begin{definition}
   The log-likelihood functional of the online predictor $\mathscr{F}$ on a given trajectory of observations $y_{1:n}$ under the information set $\mathcal{I}$, is given as
   \begin{equation*}
      \begin{aligned}
         \mathcal{L}\left(\mathscr{F}, y_{1:n}\right)
          & :=\log \hat{p}_{\mathbf{y}_{1:n}}(y_1, \ldots, y_n)                                       \\
          & = \sum_{k=1}^{n} \log \hat{p}_{\mathbf{y}_k \mid \mathbf{y}_{1:k-1}}(y_k \mid y_{1:k-1}),
      \end{aligned}
   \end{equation*}
   where $\hat{p}_{\mathbf{y}_k \mid \mathbf{y}_{1:k-1}}(\cdot \mid y_{1:k-1}) = \mathscr{F}_k(y_{1:k-1}; \mathcal{I})$.
\end{definition}
\begin{remark}
   $\hat{p}_{\mathbf{y}_{1:n}}$ is the predicted joint probability density of $\mathbf{y}_{1:n}$, which can be decomposed to the product of one-step conditional probability densities based on the chain rule:
   \begin{equation}\label{eq:chain}
      \begin{aligned}
         \hat{p}_{\mathbf{y}_{1:n}}(y_1,\ldots, y_n)
         = & \; \hat{p}_{\mathbf{y}_{2:n} \mid \mathbf{y}_1}(y_2,\ldots, y_{n}\mid y_1)\hat{p}_{\mathbf{y}_{1}}(y_1) \\
         = & \prod_{k=1}^n \hat{p}_{\mathbf{y}_k \mid \mathbf{y}_{1:k-1}}(y_k \mid y_{1:k-1}).
      \end{aligned}
   \end{equation}
\end{remark}

The log-likelihood functional measures the online prediction performance of $\mathscr{F}$ on a specific trajectory of observations. To measure how well the predictor performs on all the other possible observation trajectories generated from the system, we need to study the expectation of the likelihood functional over the trajectories.
\begin{definition}
   The expected log-likelihood functional of the online predictor $\mathscr{F}$ under information set $\mathcal{I}$ is given as \[\mathcal{L}\left(\mathscr{F}, \mathbf{y}_{1:n} \right):=\E_{y_{1:n}} \mathcal{L}\left(\mathscr{F}, y_{1:n}\right).\]
\end{definition}
\begin{remark}
   From the perspective of statistical learning theory, $\mathcal{L}\left(\mathscr{F}, y_{1:n}\right)$ is similar to the concept of training loss because it reflects how well the data $y_{1:n}$ is consistent with the statistical model $\mathscr{F}$. Naturally, $\mathcal{L}\left(\mathscr{F}, \mathbf{y}_{1:n} \right)$ is similar to the concept of generalization loss, which is the expectation of the training loss on the data.
\end{remark}

Since the metric depends on the predictive distribution only through the realized outputs, it is a local scoring rule. Next, we should verify that the expected log-likelihood functional is indeed a proper scoring rule.

\subsection{Evaluation and Proper Scoring Rule}
Given a trajectory of observations $y_{1:n}$ and an online probabilistic predictor $\mathscr{F}$, evaluating the log-likelihood functional $\mathcal{L}\left(\mathscr{F}, y_{1:n}\right)$ is equivalent to evaluating the joint probability density function $\hat{p}_{\mathbf{y}_{1:n}}(y_1, \ldots, y_n)$. Nevertheless, even when the information set is adequate, this joint distribution does not have an analytical expression due to the nonlinear dynamics and non-Gaussian noises. When the information set is inadequate, evaluating the expected log-likelihood functional is more difficult. In the following theorem, we utilize the dynamics of the state-space model to derive a formal evaluation.

\begin{theorem}\label{thm:evl-1}
   The expected log-likelihood functional can be formally evaluated as follows,
   \begin{equation*}
      \mathcal{L}(\mathscr{F}, \mathbf{y}_{1:n})\!=\!-\mathrm{H}(\mathbf{y}_{1:n})- \sum_{k=1}^{n}\E_{y_{1:k-1}} D_{KL}\!\left(q_k\Vert \hat{q}_k\right),
   \end{equation*}
   where $q_k = p_{\mathbf{y}_k\mid \mathbf{y}_{1:k-1}}(\cdot\!\mid\!y_{1:k-1})$, $\hat{q}_k = \hat{p}_{\mathbf{y}_k\mid \mathbf{y}_{1:k-1}}(\cdot\!\mid\!y_{1:k-1})$.
\end{theorem}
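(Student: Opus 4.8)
The plan is to reduce the claim to two elementary chain rules — the chain rule for probability densities and the chain rule for differential entropy — together with the ``add and subtract $\log q_k$'' identity; the state-space structure of \eqref{eq:sds} enters only to guarantee that $\mathbf{y}_{1:n}$ is a bona fide process with a joint density, so that $\mathrm{H}(\mathbf{y}_{1:n})$ and the one-step conditional laws $q_k=p_{\mathbf{y}_k\mid\mathbf{y}_{1:k-1}}(\cdot\mid y_{1:k-1})$ are well defined. First I would expand the definition of the expected log-likelihood functional using the chain-rule decomposition \eqref{eq:chain} and linearity:
\[
\mathcal{L}(\mathscr{F},\mathbf{y}_{1:n})=\E_{y_{1:n}}\sum_{k=1}^{n}\log\hat{p}_{\mathbf{y}_k\mid\mathbf{y}_{1:k-1}}(y_k\mid y_{1:k-1})=\sum_{k=1}^{n}\E_{y_{1:n}}\log\hat{q}_k(y_k).
\]
Because the $k$-th summand depends on $y_{1:n}$ only through $y_{1:k}$, the outer expectation collapses to $\E_{y_{1:k}}$, which I then split by the tower property into $\E_{y_{1:k-1}}\E_{y_k\mid y_{1:k-1}}$. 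The inner conditional expectation is exactly the integral $\int q_k(y_k)\log\hat{q}_k(y_k)\,\dd y_k$.

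Next I would rewrite each inner integral via $\int q_k\log\hat{q}_k\,\dd y_k=\int q_k\log q_k\,\dd y_k-\int q_k\log\tfrac{q_k}{\hat{q}_k}\,\dd y_k$, i.e.
\[
\int q_k\log\hat{q}_k\,\dd y_k=-\mathrm{H}(\mathbf{y}_k\mid\mathbf{y}_{1:k-1}=y_{1:k-1})-D_{KL}(q_k\Vert\hat{q}_k),
\]
where $\mathrm{H}(\mathbf{y}_k\mid\mathbf{y}_{1:k-1}=y_{1:k-1})$ is the differential entropy of the true conditional law at the realized past. Taking $\E_{y_{1:k-1}}$ of both sides and summing over $k$ gives
\[
\mathcal{L}(\mathscr{F},\mathbf{y}_{1:n})=-\sum_{k=1}^{n}\E_{y_{1:k-1}}\mathrm{H}(\mathbf{y}_k\mid\mathbf{y}_{1:k-1}=y_{1:k-1})-\sum_{k=1}^{n}\E_{y_{1:k-1}}D_{KL}(q_k\Vert\hat{q}_k).
\]
Finally I would identify the first sum: $\E_{y_{1:k-1}}\mathrm{H}(\mathbf{y}_k\mid\mathbf{y}_{1:k-1}=y_{1:k-1})$ is by definition the conditional differential entropy $\mathrm{H}(\mathbf{y}_k\mid\mathbf{y}_{1:k-1})$, and the chain rule for differential entropy yields $\sum_{k=1}^{n}\mathrm{H}(\mathbf{y}_k\mid\mathbf{y}_{1:k-1})=\mathrm{H}(\mathbf{y}_{1:n})$, which is precisely the stated formula.

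The algebra is routine; the one point I would be careful about — and flag as a measure-theoretic caveat rather than belabor — is integrability. When $\mathcal{I}$ is inadequate, $\hat{q}_k$ need not be absolutely continuous with respect to $q_k$, so individual terms $D_{KL}(q_k\Vert\hat{q}_k)$ (and hence $\mathcal{L}$) may be $+\infty$, and if the $\hat{q}_k$ are badly misspecified the log-score can be $-\infty$; the evaluation should therefore be read as a \emph{formal} identity in the extended reals, valid as long as one does not incur an $\infty-\infty$ cancellation (in particular it is an honest equality whenever each conditional entropy and each averaged divergence is finite, the regime used in the sequel). I expect this bookkeeping, rather than the chain-rule manipulations, to be the only part a careful reader would want pinned down.
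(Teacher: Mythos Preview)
Your proposal is correct and follows essentially the same route as the paper's proof: expand via the chain rule \eqref{eq:chain}, collapse the expectation from $\E_{y_{1:n}}$ to $\E_{y_{1:k}}$, apply the tower property, decompose the cross-entropy integral into $-\mathrm{H}(q_k)-D_{KL}(q_k\Vert\hat q_k)$, and finish with the chain rule for joint differential entropy. The only difference is that you add an explicit integrability caveat (the $\infty-\infty$ discussion), which the paper omits; this is a welcome clarification but does not change the argument.
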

\begin{proof}
   Please see Appendix \ref{app:thm:evl-1}.
\end{proof}

According to the non-negative property of KL-divergences, i.e.,
$D_{KL}\!\left(q_k\Vert \hat{q}_k\right) \leq 0$, we have the following corollary.
\begin{corollary}\label{cor:upper-bound-1}
   The expected log-likelihood functional is upper-bounded as follows,
   \begin{equation*}
      \mathcal{L}(\mathscr{F}, \mathbf{y}_{1:n}) \leq -\mathrm{H}(\mathbf{y}_{1:n}),
   \end{equation*}
   and the equality holds if and only if $p_{\mathbf{y}_k\mid \mathbf{y}_{1:k-1}} = \hat{p}_{\mathbf{y}_k\mid \mathbf{y}_{1:k-1}}$ in the sense of the Lebesgue measure.
\end{corollary}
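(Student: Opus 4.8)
The plan is to read the statement off directly from Theorem~\ref{thm:evl-1} combined with the classical nonnegativity of the Kullback--Leibler divergence. By Theorem~\ref{thm:evl-1},
\begin{equation*}
   \mathcal{L}(\mathscr{F}, \mathbf{y}_{1:n}) = -\mathrm{H}(\mathbf{y}_{1:n}) - \sum_{k=1}^{n}\E_{y_{1:k-1}} D_{KL}\!\left(q_k\Vert \hat{q}_k\right),
\end{equation*}
so the corollary reduces to showing that every summand $\E_{y_{1:k-1}} D_{KL}(q_k\Vert \hat q_k)$ is nonnegative and to characterizing when all of them vanish simultaneously.

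First I would record Gibbs' inequality in the form needed here: for any two densities $p,q$ on a common space, $D_{KL}(p\Vert q)\ge 0$, with equality if and only if $p=q$ Lebesgue-a.e. This follows from Jensen's inequality applied to the concave function $\log$, namely $\int p\log(q/p)\le \log\int p\,(q/p)=\log\int q=0$, where strict concavity forces $q/p$ to be constant a.e.\ on the support of $p$ in the equality case, hence $p=q$ a.e. Applying this with $p=q_k(\cdot\mid y_{1:k-1})$ and $q=\hat q_k(\cdot\mid y_{1:k-1})$ for each fixed history gives $D_{KL}(q_k\Vert\hat q_k)\ge 0$ pointwise in $y_{1:k-1}$; taking expectation over $\mathbf{y}_{1:k-1}$ preserves nonnegativity, so each term of the sum is $\ge 0$ and therefore $\mathcal{L}(\mathscr{F},\mathbf{y}_{1:n})\le -\mathrm{H}(\mathbf{y}_{1:n})$.

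For the equality case I would argue that a finite sum of nonnegative numbers is zero iff each term is zero, so equality holds iff $\E_{y_{1:k-1}} D_{KL}(q_k\Vert\hat q_k)=0$ for every $k=1,\dots,n$; since the integrand is nonnegative, this is equivalent to $D_{KL}(q_k\Vert\hat q_k)=0$ for $p_{\mathbf{y}_{1:k-1}}$-almost every $y_{1:k-1}$, which by the equality clause of Gibbs' inequality means $q_k(\cdot\mid y_{1:k-1})=\hat q_k(\cdot\mid y_{1:k-1})$ Lebesgue-a.e., for $p_{\mathbf{y}_{1:k-1}}$-almost every history. Unwinding notation, this is precisely the assertion that $p_{\mathbf{y}_k\mid\mathbf{y}_{1:k-1}}=\hat p_{\mathbf{y}_k\mid\mathbf{y}_{1:k-1}}$ in the Lebesgue-measure sense for all $k$.

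The routine ingredients are Jensen's inequality and the ``sum of nonnegatives'' observation; the only point needing care is the measure-theoretic bookkeeping in the equality statement, i.e.\ making the nested quantifiers (``a.e.\ in $y_k$, for almost every history $y_{1:k-1}$'') precise and checking that they coincide with the intended reading of ``equal in the sense of the Lebesgue measure.'' I would also remark that the statement is of interest under the implicit finiteness of $\mathrm{H}(\mathbf{y}_{1:n})$ and of the divergence terms, the bound being trivially consistent otherwise. The substantive work, namely the decomposition itself, is already contained in Theorem~\ref{thm:evl-1}, which this corollary takes as given.
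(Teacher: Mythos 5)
Your proposal is correct and follows exactly the route the paper intends: the corollary is read off from the decomposition in Theorem~\ref{thm:evl-1} together with the nonnegativity of the KL-divergence (Gibbs' inequality) and its equality case, which the paper itself invokes in one line (modulo a sign typo, writing $D_{KL}(q_k\Vert\hat q_k)\leq 0$ where $\geq 0$ is meant). Your added care with the equality case --- reducing a vanishing sum of nonnegative expectations to $q_k=\hat q_k$ Lebesgue-a.e.\ for almost every history --- is a faithful elaboration of what the paper leaves implicit.
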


Consistent with our intuition, corollary \ref{cor:upper-bound-1} confirms that $\mathcal{L}$ is a proper scoring rule. Furthermore, it is strictly proper in the sense of the Lebesgue measure.

\subsection{Optimal Performance Is Impossible and The Danger of Unrobust Predictor}
The optimality in corollary \ref{cor:upper-bound-1} is nearly impossible to attain for a predictor with an inadequate information set. We explain how optimality is prevented from three perspectives.

First, because the information set is inadequate, it is impossible to uniquely determine the conditional distributions. Furthermore, the space of all the feasible distributions is very large and complex.

Second, as the prediction step increases, the predictor may be able to infer the distribution of system noises and control inputs based on previous observations. However, according to the No Free Lunch theorem \cite{adamNoFreeLunch2019}, it is impossible for any learning algorithm to accurately learn the real distribution when there is too little data. At the beginning of an online prediction task, the observations are too few to support efficient learning.

Third, even if the system noise and control inputs are somehow learned by the predictor, the system controller may adversarially adjust the design of control inputs to degrade the prediction performance without violating the information set.

\begin{example}
   Consider a one-dimensional stochastic dynamical system with noiseless observation,
   \begin{equation}\label{eq:ex-one-dim}
      \left\{
      \begin{aligned}
          & \mathbf{x}_{k+1} = \mathbf{x}_k + \mathbf{u}_k + \mathbf{w}_k \\
          & \mathbf{y}_k     = \mathbf{x}_k,
      \end{aligned}
      \right.
   \end{equation}
   where the information set $\mathcal{I}$ contains:
   \begin{enumerate}
      \item $\E\mathbf{u}_k = \mu_k$,
      \item $\E \mathbf{w}_k = 0$,
      \item $\operatorname{supp}(\mathbf{y}_k) = \R^{d_y}$.
   \end{enumerate}
   Suppose the predictor use Gaussian distribution to predict the system \eqref{eq:ex-one-dim}, specifically \[\hat{p}_{\mathbf{y}_{k+1}\mid \mathbf{y}_{1:k}} \sim \mathcal{N}(\mathbf{y}_k + \mu_k, \sigma_k^2),\] where $\sigma_k > 0$ is an adjustable hyperparameter.
\end{example}
If the second order moment of $\mathbf{u}_k + \mathbf{w}_k$ exists, i.e., $\tilde{\sigma}^2_k = \operatorname{Var}(\mathbf{u}_k + \mathbf{w}_k)$, we can calculate the one-step log-likelihood functional as follows,
\begin{equation*}
   \begin{aligned}
      \E \log \hat{p}_{\mathbf{y}_{k+1}\mid \mathbf{y}_{1:k}} \!= & \int p_{\mathbf{y}_{k+1}\mid \mathbf{y}_{1:k}}(s\!\mid\! z) \log\!\left(\frac{e^{-\frac{(s-y_k-\mu_k)^2}{2\sigma^2_k}}}{\sqrt{2\pi}\sigma_k}\!\right)\!\dd s\dd z \\
      =                                                           & -\int p_{\mathbf{y}_{k+1}\mid \mathbf{y}_{1:k}}(s\!\mid\! z) \log(\sqrt{2\pi}\sigma_k)\dd s\dd z                                                                  \\
                                                                  & -\frac{1}{2\sigma^2_k}\!\int\! p_{\mathbf{y}_{k+1}\mid \mathbf{y}_{1:k}}(s\!\mid\! z)(s\!-\!y_k\!-\!\mu_k)^2 \dd s\dd z                                           \\
      =                                                           & -\log(\sqrt{2\pi}\sigma_k) - \frac{\tilde{\sigma}^2_k}{2\sigma_k^2}.
   \end{aligned}
\end{equation*}
However, the first-order information set $\mathcal{I}_1$ allows the case that $\tilde{\sigma}^2_k = \infty$, (e.g., a Student's t-distribution with order $2$ has finite expectation but infinite variance). Towards this Gaussian-based predictor, the controller can properly adjust the distribution of $\mathbf{u}$ such that $p_{\mathbf{y}_{k+1}\mid \mathbf{y}_{1:k}}$ is a Student's t-distribution with order $2$, then the prediction performance will be unboundedly decreased such that \[\mathcal{L}(\mathscr{F}, \mathbf{y}_{1:n}) = -\infty.\]

The above example illustrates that \textit{an unrobust predictor faces the danger of unbounded performance decrease.} Conversely, a robust predictor under $\mathcal{I}$ should be able to ensure that the performance is bounded below no matter how the control inputs are designed.

In summary, $\mathcal{L}(\mathscr{F}, y_{1:n})$ is a proper local scoring rule for probabilistic prediction, and the optimal performance can be attained when the information set is adequate. However, an inadequate information set makes it impossible for the predictor to achieve optimal performance. An adversarial controller is even capable of unboundedly decreasing the prediction performance when the predictor is unrobust.

\section{Robust Probabilistic Prediction}
In this section, we propose a functional-optimization-based framework to quantitatively specify the meaning of robustness for probabilistic predictors. Then we focus on the moment-based information sets and design a class of moment-based robust probabilistic predictors. Furthermore, we optimize the performances of these robust predictors concerning moment-based information sets that are of different orders.

\subsection{Robust Probabilistic Prediction Framework}
Now that the information provided by $\mathcal{I}$ is insufficient to uniquely determine $p_{\mathbf{y}_k\mid \mathbf{y}_{1:k-1}}$, the predicted $\hat{p}_{\mathbf{y}_k\mid \mathbf{y}_{1:k-1}}$ may significantly deviate. Moreover, the system controller may adversarially change its distribution without violating the constraints of the predictor's information set. A robust probabilistic predictor should exploit the information set to ensure that the worst-case prediction performance will not be significantly degraded.

\begin{definition}[Robust probabilistic predictor]
   A probabilistic predictor $\mathscr{F}$ for SDS $\Phi$ with an information set $\mathcal{I}$ is robust if the worst-case prediction performance is lower bounded, i.e., \[ \min\limits_{\mathbf{u}} \mathcal{L}(\mathscr{F}, \mathbf{y}_{1:n}) > -\infty.\]
\end{definition}
This definition quantitatively specifies the meaning of the worst case by an optimization-based framework: the minimum prediction performance should not be unboundedly decreased no matter how the controller designs inputs under the constraints of system dynamics and information set.

Although a robust probabilistic predictor ensures the existence of the performance lower bound, it may be conservative. Therefore, we are interested in deriving the optimal robust probabilistic predictor, which is the maximizer of the following max-min problem:
\begin{align}\label{eq:maxmin}
                & \max\limits_{\mathscr{F}}\min\limits_{\mathbf{u}} \mathcal{L}(\mathscr{F}, \mathbf{y}_{1:n}) \nonumber \\
   \text{s.t. } & \left\{
   \begin{aligned}
       & \mathbf{x}_{k+1}  = f(\mathbf{x}_k, \mathbf{u}_k) + \mathbf{w}_k        \\
       & \mathbf{y}_k      = h(\mathbf{x}_k) + \mathbf{v}_k,    \;1\leq k \leq n \\
       & \mathcal{I}.
   \end{aligned}
   \right.
\end{align}

The above max-min functional optimization problem can be described as a dynamic game between the predictor and the controller: the predictor tries to maximize the prediction performance, while the controller aims to minimize it; $\mathcal{I}$ not only serves as the information for the predictor but also as a limitation to the controller. At each step, the predictor is challenged with predicting the output which is affected by the active input designs from the controller.

In summary, our framework shows that to analyze whether a probabilistic predictor is robust is to solve an information-set-constrained functional optimization problem; to derive an optimal robust probabilistic predictor is to solve a max-min functional optimization problem.

\subsection{Moment-based Robust Probabilistic Predictor}
In practice, our prior knowledge of a stochastic dynamical system is usually about the moment information, such as the expectations and covariances of the noises. Most of the time, the available information set $\mathcal{I}$ is very restrictive such that only the low-order moments are known. Even worse, $\mathcal{I}$ may only guarantee the existence of some low-order moments rather than exactly knowing their values. We summarize this type of information as the following moment-based information set.

\begin{definition}
   The $m$-th moment information set for the stochastic dynamical system $\Phi$ is
   \[\mathcal{I}_m\!:=\!\left\{|\mu_\alpha(\mathbf{v})| \!<\! \infty , \forall \mathbf{v}\in \{\mathbf{w}_{1:n}, \mathbf{v}_{1:n}, \mathbf{u}_{1:n}, \mathbf{x}_0\}, |\alpha|\leq m\right\}.\]
\end{definition}

Concerning information sets $\{\mathcal{I}_m\}_{m=0}^2$, we will design a class of robust probabilistic predictors called the moment-based robust probabilistic predictor, and solve out the optimal robust probabilistic predictor. Then we analyze how the contents of the information set influence the performance.

Designing robust online predictors is equivalent to finding sufficient conditions for $\mathscr{F}$ to ensure $\min\limits_{\mathbf{u}}\mathcal{L}(\mathscr{F}, \mathbf{y}_{1:n}) > -\infty$.
A natural idea is to explicitly derive the minimal likelihood with a fixed predictor $\mathscr{F}$, then analyze which kinds of predictors will result in a $-\infty$ minimal likelihood. However, this method is too ideal to be realized, because it requires complete information about the SDS to minimize over $\mathbf{u}_{0:n-1}$ under the constraint that their $m$-th order moments exist.

Rather than the natural idea of solving the minimizer first, our method is to study when the likelihood will be negative infinite without solving the minimizer.
\begin{lemma}\label{lem:finite-condition}
   That the minimum expected log-likelihood being lower bounded is equivalent to that each one-step log-likelihood is lower bounded, i.e., for $1\leq k \leq n$, there is
   \begin{equation*}
      \begin{aligned}
          & \min\limits_{p}  \!\!\int\! p_{\mathbf{y}_k \!\mid \mathbf{y}_{1:k-1}}(s\!\mid\! {y}_{1:k-1})\log \hat{p}_{\mathbf{y}_k \mid \mathbf{y}_{1:k-1}}(s\!\mid\! {y}_{1:k-1}) \dd s > -\infty
      \end{aligned}
   \end{equation*}
   holds almost everywhere for trajectory $y_{1:k-1}$.
\end{lemma}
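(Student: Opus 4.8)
The plan is to reduce the single global worst-case quantity $\min_{\mathbf u}\mathcal L(\mathscr F,\mathbf y_{1:n})$ to a sum of $n$ per-step worst cases, and then invoke the elementary fact that a finite sum of terms that are each bounded above by a finite constant is bounded below if and only if every summand is. First I would apply the chain-rule factorization of $\hat p_{\mathbf y_{1:n}}$ (equation \eqref{eq:chain}) together with the tower property to write
\[
  \mathcal L(\mathscr F,\mathbf y_{1:n}) \;=\; \sum_{k=1}^{n}\E_{y_{1:k-1}}\big[I_k(y_{1:k-1})\big],
\]
where $I_k(y_{1:k-1}):=\int q_k(s)\log\hat q_k(s)\,\dd s$; this is exactly the per-step quantity in Theorem~\ref{thm:evl-1} (equivalently $I_k=-\mathrm H(q_k)-D_{KL}(q_k\Vert\hat q_k)$, with the conditional entropies telescoping into $\mathrm H(\mathbf y_{1:n})$). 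The structural observations I will lean on are: (i) an admissible control law $\mathbf u$ influences $\mathcal L$ only through the true one-step conditionals $q_1,\dots,q_n$ and the law of the history; (ii) under $\mathcal I_m$ the moment constraints are imposed on $\mathbf u_0,\dots,\mathbf u_{n-1}$ separately, so the controller may choose the one-step law induced at step $k$ essentially independently across steps; and (iii) since $\hat q_k$ is a predicted density with $\|\hat q_k\|_\infty<\infty$ (for the Gaussian-type predictors of interest $\|\hat q_k\|_\infty=1/(\sqrt{2\pi}\sigma_{k-1})$), each $I_k$ is bounded above by a finite constant $M_k$ uniformly in the history and in the controller's choice.

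For the ``if'' direction, suppose that for every $k$ the one-step worst case $\min_p I_k(y_{1:k-1})$ is finite for a.e.\ history — and, as happens in the moment-based setting, that this bound $c_k$ does not degrade with the history (in the noiseless/linear case $I_k$ is in fact history-free once the common shift cancels). Then for any $\mathbf u$ admissible under $\mathcal I_m$ one has $I_k\ge c_k$ a.e., hence $\E_{y_{1:k-1}}I_k\ge c_k$, hence $\mathcal L(\mathscr F,\mathbf y_{1:n})\ge\sum_k c_k>-\infty$; taking the infimum over $\mathbf u$ gives $\min_{\mathbf u}\mathcal L>-\infty$.

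For the ``only if'' direction I would argue by contraposition. Suppose $\min_p I_{k^\star}(y_{1:k^\star-1})=-\infty$ on a set $B$ of histories of positive reachable probability. I would construct a control law as follows: for steps $1,\dots,k^\star-1$ take any admissible choices (which already forces $I_k\le M_k$ there); then steer into $B$ with positive probability and, conditionally on the realized history, pick the law of $\mathbf u_{k^\star-1}$ along a minimizing sequence for $I_{k^\star}$. Since $I_{k^\star}$ is bounded above everywhere by $M_{k^\star}$ yet is driven to $-\infty$ on a positive-probability event, $\E_{y_{1:k^\star-1}}I_{k^\star}\to-\infty$, while $\sum_{k\ne k^\star}\E_{y_{1:k-1}}I_k\le\sum_{k\ne k^\star}M_k<\infty$; therefore $\mathcal L\to-\infty$ along this sequence, i.e.\ $\min_{\mathbf u}\mathcal L=-\infty$, the desired contradiction.

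The step I expect to be the main obstacle is making the passage between the single global $\min_{\mathbf u}$ and the $n$ per-step $\min_p$'s airtight: it relies on the uniform-in-history upper bound on $I_k$ (item (iii)), on the separability of the $\mathcal I_m$ constraints across $\mathbf u_0,\dots,\mathbf u_{n-1}$ so that per-step worst cases can be composed, and on reading ``almost everywhere'' as ``outside a history set that no admissible controller can enter with positive probability.'' One also has to exclude the measure-theoretic pathology in which $I_k$ is finite everywhere but has a non-integrable negative part; this is precisely what fails to occur in the moment-based setting, because whenever the one-step infimum is finite it is in fact uniform over histories, so the expectation inherits the same finite lower bound.
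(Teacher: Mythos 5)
Your proposal follows essentially the same route as the paper's proof: decompose $\mathcal{L}(\mathscr{F},\mathbf{y}_{1:n})$ via the chain rule and the tower property into a sum of expected one-step integrals $\E_{y_{1:k-1}}I_k(y_{1:k-1})$, then argue the ``if'' direction by bounding each summand below and the ``only if'' direction by exhibiting a positive-probability set of histories on which the one-step infimum is $-\infty$. If anything you are more careful than the printed proof, which passes silently from ``the one-step infimum is finite for a.e.\ history'' to ``its expectation is finite'' and never states the uniform upper bound on $I_k$ needed to prevent the remaining summands from compensating --- precisely the two caveats you flag at the end.
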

\begin{proof}
   Please see Appendix \ref{app:lem:finite-condition}.
\end{proof}

Lemma \ref{lem:finite-condition} reduces the problem of designing robust $\mathscr{F}$ to designing robust one-step predictor $\mathscr{F}_k$. Specifically, we need to figure out what kinds of $\hat{p}_{\mathbf{y}_k \mid \mathbf{y}_{1:k-1}}$ guarantee the log-likelihood being lower bounded when $p_{\mathbf{y}_k \mid \mathbf{y}_{1:k-1}}$ is constraint by the information set $\mathcal{I}_m$.

The next problem is, the information set $\mathcal{I}_m$ is not directly related to the conditional distribution $p_{\mathbf{y}_k \mid \mathbf{y}_{1:k-1}}$, and there is no simple rule to characterize how much statistical information of $p_{\mathbf{y}_k \mid \mathbf{y}_{1:k-1}}$ can be exploited from $\mathcal{I}_m$. A basic conclusion can be drawn from $\mathcal{I}_m$ is provided as follows.
\begin{lemma}
   The information set $\mathcal{I}_m$ implies that the moment of $p_{\mathbf{y}_k \mid \mathbf{y}_{1:k-1}}$ with order not greater than $m$ exists.
\end{lemma}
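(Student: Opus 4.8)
The plan is to follow the moment through the dynamics: first show that the \emph{unconditional} law of $\mathbf{y}_k$ has finite $m$-th order moments, and then descend to the conditional law $p_{\mathbf{y}_k\mid\mathbf{y}_{1:k-1}}$ using the elementary fact that a nonnegative integrable random variable has an almost-surely finite conditional expectation. It is convenient to phrase everything in terms of the $L^m$-norm $\|\mathbf{z}\|_{L^m}:=(\E\|\mathbf{z}\|^m)^{1/m}$, after recording the routine observation that $\E\|\mathbf{z}\|^m<\infty$ is equivalent to $|\mu_\alpha(\mathbf{z})|<\infty$ for all $\alpha$ with $|\alpha|\le m$ (expand $\|\mathbf{z}\|^m$ against an equivalent $\ell^m$-type norm on $\R^d$ and compare term by term with the absolute moments of order $\le m$). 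I would also fix, as a regularity property of the model class, that $f$ and $h$ are of at most affine growth, $\|f(x,u)\|\le a(\|x\|+\|u\|)+b$ and $\|h(x)\|\le a'\|x\|+b'$; see the obstacle paragraph below for why something of this kind is indispensable.

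Next I would propagate finiteness along the state recursion. By $\mathcal{I}_m$, the primitive vectors $\mathbf{x}_0$, $\mathbf{w}_{0:k-1}$, $\mathbf{u}_{0:k-1}$, $\mathbf{v}_{1:k}$ all have finite $L^m$-norm. Applying Minkowski's inequality to $\mathbf{x}_j=f(\mathbf{x}_{j-1},\mathbf{u}_{j-1})+\mathbf{w}_{j-1}$ together with the growth bound on $f$ gives $\|\mathbf{x}_j\|_{L^m}\le a\bigl(\|\mathbf{x}_{j-1}\|_{L^m}+\|\mathbf{u}_{j-1}\|_{L^m}\bigr)+b+\|\mathbf{w}_{j-1}\|_{L^m}$, so a finite induction on $j$ starting from $\|\mathbf{x}_0\|_{L^m}<\infty$ yields $\|\mathbf{x}_k\|_{L^m}<\infty$; one more application of Minkowski to $\mathbf{y}_k=h(\mathbf{x}_k)+\mathbf{v}_k$ and the growth bound on $h$ then give $\E\|\mathbf{y}_k\|^m<\infty$.

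To pass to the conditional distribution, note that $\mathbf{y}_{1:k-1}$ is a measurable function of the primitives, so $\E[\|\mathbf{y}_k\|^m\mid\mathbf{y}_{1:k-1}]$ is a version of the conditional expectation of the nonnegative integrable variable $\|\mathbf{y}_k\|^m$ and is therefore finite for $\mathbf{y}_{1:k-1}$-almost every realization $y_{1:k-1}$. Since $\int\|s\|^m\,p_{\mathbf{y}_k\mid\mathbf{y}_{1:k-1}}(s\mid y_{1:k-1})\,\dd s=\E[\|\mathbf{y}_k\|^m\mid\mathbf{y}_{1:k-1}=y_{1:k-1}]$, this is exactly the statement that $p_{\mathbf{y}_k\mid\mathbf{y}_{1:k-1}}(\cdot\mid y_{1:k-1})$ has finite $m$-th absolute moment, and by the norm-equivalence remark every $\alpha$-moment of order $|\alpha|\le m$ of the conditional law exists for almost every $y_{1:k-1}$ — the same ``almost everywhere'' qualifier used in Lemma~\ref{lem:finite-condition}.

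The main obstacle is the propagation step, not the conditioning: for arbitrary measurable $f,h$ the claim is false (take $\mathbf{x}_0$ and $\mathbf{u}_0$ deterministic, $\mathbf{w}_0$ Gaussian, and $h(x)=e^{x^2}$ with a large enough state variance, so that $\E\|\mathbf{y}_1\|^m=\infty$ while $\mathcal{I}_m$ — indeed $\mathcal{I}_\infty$ — holds). Hence the proof must explicitly invoke a growth / moment-preservation property of $f$ and $h$, which I would add as a standing assumption on the SDS (affine dynamics being the canonical instance); granting that, the remaining ingredients — Minkowski, the finite induction, and the conditional-expectation argument — are entirely routine.
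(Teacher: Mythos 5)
Your proposal follows essentially the same route as the paper's own (one-line) argument: $\mathcal{I}_m$ gives finiteness of the unconditional $m$-th moments of $\mathbf{y}_k$, and conditional expectations of a nonnegative integrable variable are almost surely finite, so the conditional moments exist for almost every $y_{1:k-1}$. Your additional observation is well taken and goes beyond what the paper records: the propagation step from the moments of $\mathbf{w}, \mathbf{v}, \mathbf{u}, \mathbf{x}_0$ to those of $\mathbf{y}_k$ genuinely fails for arbitrary measurable $f,h$ (your $h(x)=e^{x^2}$ example is a valid counterexample), so a growth or moment-preservation condition on the dynamics is tacitly assumed by the paper and ought to be stated; in the linear setting of Section VI, where the predictor is actually implemented, it holds automatically.
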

This lemma can be easily proved since $\mathcal{I}_m$ indicates the $m$-th order of moment of $p_{\mathbf{y}_k}$ exists, which further indicates that the $m$-th order of moment of the conditional distribution $p_{\mathbf{y}_k \mid \mathbf{y}_{1:k-1}}$ exists. We utilize the above two lemmas by considering an auxiliary optimization problem.

\begin{align}\label{eq:maxmin-aux}
    & \min\limits_{p}  \int\! p_{\mathbf{y}_k \!\mid \mathbf{y}_{1:k-1}}(s\!\mid\! y_{1:k-1})\log \hat{p}_{\mathbf{y}_k \mid \mathbf{y}_{1:k-1}}(s\!\mid\! y_{1:k-1}) \dd s \nonumber \\
    & \text{s.t. }      \!\!                    \left\{
   \begin{aligned}
       & \mathbf{x}_{i+1}  = f(\mathbf{x}_i) + g(\mathbf{x}_i)\mathbf{u}_i + \mathbf{w}_i                                                                         \\
       & \mathbf{y}_i      = h(\mathbf{x}_i) + \mathbf{v}_i, \; 1\leq i \leq k                                                                                    \\
       & \mu_\alpha(\mathbf{y}_k \mid y_{1:k-1})    \! = \!\!\int\!\! s^\alpha p_{\mathbf{y}_k \mid \mathbf{y}_{1:k-1}}(s\!\mid\! y_{1:k-1})\dd s, |\alpha|\leq m \\
       & \operatorname{supp}(\mathbf{y}_k),
   \end{aligned}
   \right.
\end{align}
where
\begin{equation}
   \operatorname{supp}(\mathbf{y}_k) \!=\! \{v \in \R^{d} \!\mid\! \underline{y}_k^{(i)} \!\leq\! v^{(i)}\!\leq\! \bar{y}_k^{(i)}, 1\leq i \leq d_y\},
\end{equation}
and both $\underline{y}_k^{(i)}$ and $\bar{y}_k^{(i)}$ belong to $\R\cup\{-\infty,+\infty\}$. This auxiliary problem follows by replacing $\mathcal{L}(\mathscr{F}, \mathbf{y}_{1:n})$ and $\mathcal{I}_m$ by one-step log-likelihood and conditional moments respectively. The rationale of this replacement is guaranteed by Lemma \ref{lem:aux}.

\begin{lemma}\label{lem:aux}
   If the minimum value of \eqref{eq:maxmin-aux} is bounded below, the predictor $\mathscr{F}$ such that $\mathscr{F}_k(y_{1:k-1}; \mathcal{I}_m) = \hat{p}_{\mathbf{y}_k \mid \mathbf{y}_{1:k-1}}(\cdot \mid y_{1:k-1})$ is robust.
\end{lemma}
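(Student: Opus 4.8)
The plan is to derive robustness directly from Lemma~\ref{lem:finite-condition} by recognizing the auxiliary problem \eqref{eq:maxmin-aux} as a \emph{relaxation} of the one-step worst-case quantity to which Lemma~\ref{lem:finite-condition} reduces robustness; a lower bound for the relaxation is then automatically a lower bound for the quantity of interest.

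First I would apply Lemma~\ref{lem:finite-condition}: the predictor $\mathscr{F}$ is robust, i.e.\ $\min_{\mathbf{u}}\mathcal{L}(\mathscr{F},\mathbf{y}_{1:n})>-\infty$, if and only if for every $1\le k\le n$ and almost every trajectory $y_{1:k-1}$,
\[
   \min_{p\in\mathcal{P}_k(y_{1:k-1})}\int p_{\mathbf{y}_k\mid\mathbf{y}_{1:k-1}}(s\mid y_{1:k-1})\,\log\hat{p}_{\mathbf{y}_k\mid\mathbf{y}_{1:k-1}}(s\mid y_{1:k-1})\,\dd s>-\infty,
\]
where $\mathcal{P}_k(y_{1:k-1})$ collects all conditional densities $p_{\mathbf{y}_k\mid\mathbf{y}_{1:k-1}}(\cdot\mid y_{1:k-1})$ that the system \eqref{eq:sds} can produce when the controller selects admissible inputs $\mathbf{u}_{0:k-1}$ and the exogenous signals $\mathbf{w}_{1:n},\mathbf{v}_{1:n},\mathbf{x}_0$ are consistent with $\mathcal{I}_m$. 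Fix such a $k$ and such a $y_{1:k-1}$; then $\hat{p}_{\mathbf{y}_k\mid\mathbf{y}_{1:k-1}}(\cdot\mid y_{1:k-1})=\mathscr{F}_k(y_{1:k-1};\mathcal{I}_m)$ is determined, so its hyperparameters are frozen and the displayed objective coincides with the objective of \eqref{eq:maxmin-aux}.

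The core step is the inclusion $\mathcal{P}_k(y_{1:k-1})\subseteq\mathcal{Q}_k(y_{1:k-1})$, where $\mathcal{Q}_k(y_{1:k-1})$ denotes the feasible set of \eqref{eq:maxmin-aux} (densities consistent with the state-space recursion, with all moments of order $\le m$ finite, supported on the box $\operatorname{supp}(\mathbf{y}_k)$). Any $p\in\mathcal{P}_k(y_{1:k-1})$ is by construction generated through the recursion; it has all moments of order $\le m$ finite because, by the preceding lemma, $\mathcal{I}_m$ implies that $p_{\mathbf{y}_k\mid\mathbf{y}_{1:k-1}}$ has finite moments of every order $\le m$; and it is supported on $\operatorname{supp}(\mathbf{y}_k)$ once that box is taken to contain the true support, for which the default choice $\operatorname{supp}(\mathbf{y}_k)=\R^{d_y}$ always suffices. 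Hence $\mathcal{P}_k(y_{1:k-1})\subseteq\mathcal{Q}_k(y_{1:k-1})$.

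Finally, minimizing the same functional over the larger set $\mathcal{Q}_k(y_{1:k-1})$ can only decrease the value, so the optimal value of \eqref{eq:maxmin-aux} is a lower bound for the minimum over $\mathcal{P}_k(y_{1:k-1})$; since the former is $>-\infty$ by hypothesis, so is the latter, for every $k$ and almost every $y_{1:k-1}$. Reading the converse direction of Lemma~\ref{lem:finite-condition} (a finite sum of $n$ finite one-step lower bounds) then yields $\min_{\mathbf{u}}\mathcal{L}(\mathscr{F},\mathbf{y}_{1:n})>-\infty$, i.e.\ $\mathscr{F}$ is robust. I expect the main obstacle to be the careful bookkeeping behind $\mathcal{P}_k\subseteq\mathcal{Q}_k$: one must pin down ``the conditional law induced by admissible inputs together with $\mathcal{I}_m$-compatible noise'' as a genuine family of densities, check that the a.e.-in-$y_{1:k-1}$ and measurability caveats of Lemma~\ref{lem:finite-condition} are preserved throughout, and reconcile the control-affine recursion written in \eqref{eq:maxmin-aux} with the general form in \eqref{eq:sds}; once these are settled, the remaining content is simply that enlarging the feasible set of a minimization cannot raise its value.
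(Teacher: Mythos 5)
Your proposal is correct and follows essentially the same route as the paper: both arguments observe that the moment (and support) constraints in \eqref{eq:maxmin-aux} are only necessary conditions implied by $\mathcal{I}_m$, so the auxiliary feasible set is a relaxation of the true one, whence its finite minimum lower-bounds the true one-step worst case, and Lemma~\ref{lem:finite-condition} lifts this to robustness of $\mathscr{F}$. Your version simply makes explicit the bookkeeping (the sets $\mathcal{P}_k\subseteq\mathcal{Q}_k$ and the a.e.\ caveat) that the paper's one-paragraph proof leaves implicit.
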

\begin{proof}
   Since the moment constraints of $p$ are necessary conditions exploited from $\mathcal{I}_m$, it follows that any $p_{\mathbf{y}_k \!\mid \mathbf{y}_{1:k-1}}$ that is feasible for the original problem is also feasible to this problem. In other words, the feasible space is enlarged. Therefore, the minimum value to this problem is no larger than the original problem, any $\hat{p}$ ensuring the existence of a finite lower bound immediately guarantees that it is a robust probabilistic predictor.
\end{proof}
\begin{remark}
   The robust probabilistic predictors satisfying this lemma belong to a subset of all the robust probabilistic predictors since the constraints only contain the moment-based information on $p_{\mathbf{y}_k \mid \mathbf{y}_{1:k-1}}$, which is a subset of $\mathcal{I}_m$. In fact, any other information exploited from $\mathcal{I}_m$ can be added as constraints in the optimization problem, our choice of the moment-based information provides only one way to design robust probabilistic predictors.
\end{remark}

In the next theorem, we present a sufficient and necessary condition on the existence of a lower-bounded optimal value for the auxiliary problem \eqref{eq:maxmin-aux}.
\begin{theorem}[$m$-th moment robust probabilistic predictor]\label{thm:robust-form}
   An probabilistic predictor $\mathscr{F}$ is robust under $\mathcal{I}_m$ if it has a polynomial-exponential form such that
   \begin{equation*}
      \mathscr{F}_k(y_{1:k-1}; \mathcal{I}_m) = e^{\sum_{i=0}^{m} \sum_{|\alpha|=i} \lambda_\alpha s^\alpha }
   \end{equation*}
   where $\lambda_\alpha \in \R$.
\end{theorem}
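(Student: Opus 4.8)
The plan is to reduce the statement, via Lemmas~\ref{lem:finite-condition} and~\ref{lem:aux}, to a single-line computation on the auxiliary one-step problem \eqref{eq:maxmin-aux}. By Lemma~\ref{lem:aux}, to conclude that $\mathscr{F}$ is robust under $\mathcal{I}_m$ it suffices to show that, when $\hat{p} := \mathscr{F}_k(y_{1:k-1};\mathcal{I}_m)$ has the stated polynomial-exponential form, the optimal value of \eqref{eq:maxmin-aux}, i.e.
\[
\min_{p}\ \int p_{\mathbf{y}_k\mid\mathbf{y}_{1:k-1}}(s\mid y_{1:k-1})\,\log\hat{p}_{\mathbf{y}_k\mid\mathbf{y}_{1:k-1}}(s\mid y_{1:k-1})\,\dd s ,
\]
is bounded below over every conditional density $p$ feasible for \eqref{eq:maxmin-aux}. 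I would then carry this out in three short steps.

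First, I would take logarithms of the predictive density: since $\hat{p}(s)=\exp\bigl(\sum_{i=0}^{m}\sum_{|\alpha|=i}\lambda_\alpha s^\alpha\bigr)$ is everywhere strictly positive, $\log\hat{p}(s)=\sum_{i=0}^{m}\sum_{|\alpha|=i}\lambda_\alpha s^\alpha$ is a genuine real-valued polynomial in $s$ of total degree at most $m$. Substituting and interchanging sum and integral gives
\[
\int p(s)\log\hat{p}(s)\,\dd s=\sum_{i=0}^{m}\sum_{|\alpha|=i}\lambda_\alpha\!\int\! s^\alpha p(s)\,\dd s=\sum_{i=0}^{m}\sum_{|\alpha|=i}\lambda_\alpha\,\mu_\alpha(\mathbf{y}_k\mid y_{1:k-1}),
\]
so the objective does not depend on the particular $p$ beyond its moments up to order $m$; it equals a fixed finite linear combination of the conditional moments, and the minimum value of \eqref{eq:maxmin-aux} is exactly this quantity.

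Second, I would justify that the interchange above is legitimate and that each term is a real number rather than $-\infty$. Here I invoke the preceding lemma: $\mathcal{I}_m$ implies the conditional moments $\mu_\alpha(\mathbf{y}_k\mid y_{1:k-1})$ exist for $|\alpha|\le m$. Existence of the order-$m$ (absolute) moments forces finiteness of all absolute moments of order at most $m$ — Lyapunov's inequality for the pure powers $(s^{(i)})^j$ with $j\le m$, and generalized H\"older for the mixed powers $s^\alpha=\prod_i(s^{(i)})^{\alpha_i}$ with $|\alpha|\le m$ — so $\int|s^\alpha|p(s)\,\dd s<\infty$ for every $|\alpha|\le m$ and the finite sum defining $\int p\log\hat{p}\,\dd s$ is absolutely convergent. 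When $\operatorname{supp}(\mathbf{y}_k)$ is bounded this is immediate since polynomials are bounded there; when it is unbounded it is exactly what has just been established.

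Third, since the resulting value is a finite sum of finite real numbers for every feasible $p$ — hence for every admissible control strategy $\mathbf{u}$ — the optimal value of \eqref{eq:maxmin-aux} is bounded below, and Lemma~\ref{lem:aux} yields that $\mathscr{F}$ is robust under $\mathcal{I}_m$. I would close with a remark that the $\lambda_\alpha$ are of course not arbitrary in practice: for $\hat{p}$ to be a bona fide normalized density on $\operatorname{supp}(\mathbf{y}_k)$ the top-degree part of the exponent must send $\log\hat{p}\to-\infty$ at the boundary of an unbounded support (e.g.\ $\lambda_\alpha<0$ on the leading term); but this normalizability requirement plays no role in the robustness argument, which goes through for any real $\lambda_\alpha$ for which $\hat{p}$ is a valid density. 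The only point requiring care — rather than a genuine obstacle — is the finiteness/interchange of the second step: one must remember that $\mathcal{I}_m$ guarantees only \emph{existence} of the order-$m$ moments, and then argue that this propagates down to all lower and mixed orders so that $\int p\log\hat{p}\,\dd s$ is a finite linear combination of finite numbers. Everything else is bookkeeping on top of Lemmas~\ref{lem:finite-condition}--\ref{lem:aux}, which already absorb the hard reduction from the trajectory-level max--min to a single-step moment problem.
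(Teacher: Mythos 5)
Your proposal is correct and its essential step is exactly the paper's own: substituting $\log\hat{p}(s)=\sum_{i=0}^{m}\sum_{|\alpha|=i}\lambda_\alpha s^\alpha$ into the one-step objective of \eqref{eq:maxmin-aux} to obtain the finite linear combination $\sum_{i=0}^{m}\sum_{|\alpha|=i}\lambda_\alpha\,\mu_\alpha(\mathbf{y}_k\mid y_{1:k-1})$, which is bounded uniformly over all feasible $p$, and then invoking Lemma~\ref{lem:aux}. The only difference is presentational: the paper first wraps this in a Lagrangian/Euler--Lagrange derivation (which motivates the polynomial-exponential form but is not needed for the sufficiency claim), whereas you verify it directly and are in fact more careful than the paper about why the mixed and lower-order absolute moments are finite so that the sum--integral interchange is legitimate.
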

\begin{proof}
   Please see Appendix \ref{app:thm:robust-form}
\end{proof}

To have an intuitive understanding of why the $m$-th order robust probabilistic predictor belongs to the exponential-polynomial families whose polynomial order is less than $m$, we provide a heuristic explanation as follows. Suppose that $\log \hat{p}_{\mathbf{y}_k \mid \mathbf{y}_{1:k-1}}$ can be expanded as a multivariate Taylor series, such that \[\log \hat{p}_{\mathbf{y}_k \mid \mathbf{y}_{1:k-1}}(s\mid y_{1:k-1}) = \sum_{i=0}^{m} \sum_{|\alpha|=i} \lambda_\alpha s^\alpha .\]
It follows that
\begin{equation*}
   \begin{aligned}
        & \int p_{\mathbf{y}_k \!\mid \mathbf{y}_{1:k-1}}(s\!\mid\! y_{1:k-1})\log \hat{p}_{\mathbf{y}_k \mid \mathbf{y}_{1:k-1}}(s\!\mid\! y_{1:k-1}) \dd s \\
      = & \sum_{i=0}^{\infty} \sum_{|\alpha|=i} \lambda_\alpha \mu_\alpha(\mathbf{y}_k \mid y_{1:k-1}).
   \end{aligned}
\end{equation*}

If $\lambda_\alpha = 0$ for any $\alpha$ such that $|\alpha| > m$, the objective is bounded below. Otherwise, suppose there exist at least one $\beta$ with $|\beta|> m$ such that $\lambda_\beta \neq 0$. By letting $p$ subject to a multivariate Student's t-distribution with order $|\beta|$, the objective becomes negative infinite. Therefore, the order of $\log \hat{p}$ should be no more than $m$.

\subsection{Optimal m-th Moment Robust Probabilistic Predictor}
For each $\mathcal{I}_m$, there are many feasible $m$-th moment robust probabilistic predictors. To improve their performance, we are interested in tuning the parameters $\lambda_\alpha$ in Theorem \ref{thm:robust-form} to maximize the performance.

\subsubsection{Information Set of Zeroth Order}
$\mathcal{I}_0$ describes the situation where the predictor is not confident with any statistical feature of the system. Intuitively, when the expectation of control inputs to an SDS is not guaranteed to exist, making a robust probabilistic prediction is hard. The following lemma shows that a zeorth moment robust probabilistic predictor exists if and only if the information on the support can be limited to a bounded set.
\begin{lemma}
   A zeroth moment robust probabilistic predictor exists if and only if the support of $\mathbf{y}_{k}$ where $1\leq k \leq n$ is both upper and lower bounded elementwise, i.e., $\underline{y}_k^{(i)} > -\infty$ and $\bar{y}_k^{(i)} < \infty$ hold for $1\leq i \leq d_y$.
\end{lemma}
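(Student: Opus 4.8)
The plan is to prove the two implications separately and directly. For the ``if'' direction I would \emph{construct} a robust predictor explicitly (the uniform density on the support box, which is the $m=0$ instance of the exponential-polynomial family of Theorem~\ref{thm:robust-form}); for the ``only if'' direction I would argue by contraposition, showing that a support box that is unbounded in some coordinate defeats \emph{every} candidate predictive density, via the one-step reduction of Lemma~\ref{lem:finite-condition}. Throughout, the relevant information set is $\mathcal{I}_0$ together with the support sets $\operatorname{supp}(\mathbf{y}_k)$ appearing in~\eqref{eq:maxmin-aux} with $m=0$.

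For sufficiency, assume $\underline{y}_k^{(i)}>-\infty$ and $\bar{y}_k^{(i)}<\infty$ for all $k$ and $1\le i\le d_y$. Then each support box $S_k:=\operatorname{supp}(\mathbf{y}_k)$ has finite positive volume $V_k=\prod_{i=1}^{d_y}(\bar{y}_k^{(i)}-\underline{y}_k^{(i)})$, so the uniform density $\mathscr{F}_k(y_{1:k-1};\mathcal{I}_0)=\mathbf{1}_{S_k}/V_k$ is a bona fide pdf --- and it is exactly the $m=0$ instance $e^{\lambda_0}$ of Theorem~\ref{thm:robust-form} with $\lambda_0=-\log V_k$ on $S_k$. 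Since every feasible conditional law $p_{\mathbf{y}_k\mid\mathbf{y}_{1:k-1}}$ is supported in $S_k$,
\[
\int p_{\mathbf{y}_k\mid\mathbf{y}_{1:k-1}}(s\mid y_{1:k-1})\,\log\hat{p}_{\mathbf{y}_k\mid\mathbf{y}_{1:k-1}}(s\mid y_{1:k-1})\,\dd s=-\log V_k>-\infty
\]
for every trajectory $y_{1:k-1}$, so each one-step log-likelihood is bounded below, and Lemma~\ref{lem:finite-condition} yields $\min_{\mathbf{u}}\mathcal{L}(\mathscr{F},\mathbf{y}_{1:n})>-\infty$, i.e. $\mathscr{F}$ is robust. (This half also follows from Lemma~\ref{lem:aux} applied to~\eqref{eq:maxmin-aux} with $m=0$.)

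For necessity, suppose some coordinate of $\operatorname{supp}(\mathbf{y}_{k_0})$ is unbounded, so $\operatorname{meas}(S_{k_0})=\infty$, and let $\mathscr{F}$ be \emph{any} predictor with $\hat q:=\mathscr{F}_{k_0}(y_{1:k_0-1};\mathcal{I}_0)$. If $\hat q$ vanishes on a positive-measure subset of $S_{k_0}$, an adversarial conditional law supported there already gives $\int p\log\hat q\,\dd s=-\infty$; otherwise, since $\int_{S_{k_0}}\hat q\,\dd s\le1$, Markov's inequality gives $\operatorname{meas}\{s\in S_{k_0}:\hat q(s)\ge e^{-M}\}\le e^{M}$ for every $M>0$, so $B_M:=\{s\in S_{k_0}:\hat q(s)<e^{-M}\}$ has infinite measure. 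Taking $p^\star$ uniform on $B_M\cap B(0,R)$, with $R$ large enough that this set has positive (necessarily finite) measure, gives $\int p^\star\log\hat q\,\dd s\le-M$; letting $M\to\infty$ shows $\min_p\int p\log\hat q\,\dd s=-\infty$ for every (hence almost every) trajectory $y_{1:k_0-1}$, and Lemma~\ref{lem:finite-condition} then forces $\min_{\mathbf{u}}\mathcal{L}(\mathscr{F},\mathbf{y}_{1:n})=-\infty$. As $\mathscr{F}$ was arbitrary, no zeroth moment robust probabilistic predictor exists.

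The hard part is the realizability claim inside the necessity argument: one must justify that the adversary can actually drive the conditional law of $\mathbf{y}_{k_0}$ to the bad density $p^\star$ (or arbitrarily close to it) while respecting $\mathcal{I}_0$. The key point is that $\mathcal{I}_0$ pins down no moment of any noise or input, so the attainable conditional laws are precisely the distributions supported on the box --- this is the modelling content already encoded in~\eqref{eq:maxmin-aux}, and the same degree of freedom exploited in Example~1, where the law of $\mathbf{u}_k+\mathbf{w}_k$ is unconstrained. Everything else --- Markov's inequality, the infinite volume of an unbounded box, and the reduction from the trajectory-level criterion to the one-step criterion --- is routine.
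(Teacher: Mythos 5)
Your proof is correct, but your necessity argument takes a genuinely different and stronger route than the paper's. The paper's own proof is a one-liner confined to the family identified in Theorem~\ref{thm:robust-form}: for $m=0$ the prescribed predictor is the constant density $e^{\lambda_0}$, and a constant positive density cannot be normalized on a support of infinite Lebesgue measure, so no member of that family exists. It does not attempt to rule out predictors outside the exponential-polynomial class, and it leaves the ``if'' direction implicit (the uniform density, made explicit only in Theorem~\ref{thm:maxmin-0}). You instead prove sufficiency by the same uniform construction but then establish necessity against \emph{every} candidate density via the Markov-inequality argument: since $\int\hat q\le 1$, the set where $\hat q\ge e^{-M}$ has measure at most $e^{M}$, so on an infinite-measure support the adversary can place mass where $\hat q<e^{-M}$ and drive the one-step score below $-M$ for every $M$. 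This buys a genuinely stronger conclusion --- non-existence is not an artifact of restricting to the Lagrangian family of Theorem~\ref{thm:robust-form} --- at the cost of the realizability question you correctly flag: whether the controller can actually steer $p_{\mathbf{y}_{k_0}\mid\mathbf{y}_{1:k_0-1}}$ to (or near) the adversarial law through the dynamics \eqref{eq:sds} while respecting $\mathcal{I}_0$. The paper sidesteps this by working entirely inside the auxiliary problem \eqref{eq:maxmin-aux}, where the minimization ranges over all densities on the support box, and your appeal to that same modelling convention is consistent with Lemma~\ref{lem:aux} and Example~1; just be aware that neither your argument nor the paper's closes that gap rigorously at the level of the original max-min problem \eqref{eq:maxmin}.
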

\begin{proof}
   When $m=0$, there is $\hat{p}_{\mathbf{y}_k \mid \mathbf{y}_{1:k-1}}(s \mid y_{1:k-1}) = e^{\lambda_0}$, which is the probability density function of a uniform distribution. If the support of $\mathbf{y}_k$ is not both upper and lower bounded, the uniform distribution cannot be defined. It can be concluded that there is no zeroth moment robust probabilistic predictor if $\operatorname{supp}(\mathbf{y}_k)$ is not both upper and lower bounded.
\end{proof}

Once $\underline{y}_k^{(i)} > -\infty$ and $\bar{y}_k^{(i)} < \infty$ hold for $1\leq i \leq d_y$, deriving optimal zeroth moment robust probabilistic predictor is trivial as follows.
\begin{theorem}\label{thm:maxmin-0}
   The output of the optimal zeorth moment robust probabilistic predictor $\mathscr{F}_k^\star$ is a uniform distribution such that
   \begin{equation*}
      \hat{p}_{\mathbf{y}_k \mid \mathbf{y}_{1:k-1}}^\star(s\mid y_{1:k-1}) \!\!=\!\left\{ \!\!\!\begin{array}{cl}
         \prod_{i=1}^{d_y}\left(\bar{y}_k^{(i)}-\underline{y}_k^{(i)}\right)^{-1} & \!s\! \in \operatorname{supp}(\mathbf{y}_k) \\
         0                                                                        & \text{else.}
      \end{array}\right.
   \end{equation*}
\end{theorem}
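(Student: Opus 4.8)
The plan is to observe that for $m=0$ the robust form of Theorem~\ref{thm:robust-form} collapses to a single density — the uniform one — so that optimality becomes immediate once the normalization constant and the domain of positivity are pinned down.

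First I would invoke Theorem~\ref{thm:robust-form} with $m=0$: since $|\alpha|\le 0$ forces $\alpha=0$ and $s^{0}=1$, every zeroth moment robust predictor outputs a density of the form $\hat{p}_{\mathbf{y}_k\mid\mathbf{y}_{1:k-1}}(s\mid y_{1:k-1})=e^{\lambda_0}$, i.e.\ a positive constant $c=e^{\lambda_0}$ on the set where it is supported and $0$ elsewhere — precisely the pdf of a uniform distribution. By the preceding lemma (equivalently, by the hypothesis $\underline{y}_k^{(i)}>-\infty$ and $\bar{y}_k^{(i)}<\infty$ for $1\le i\le d_y$), $\operatorname{supp}(\mathbf{y}_k)$ is a box of finite volume $\prod_{i=1}^{d_y}\big(\bar{y}_k^{(i)}-\underline{y}_k^{(i)}\big)$.

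Next I would pin down the two remaining degrees of freedom: the box $B$ on which $\hat{q}_k:=\hat{p}_{\mathbf{y}_k\mid\mathbf{y}_{1:k-1}}(\cdot\mid y_{1:k-1})$ equals $c$, and the value $c$ itself. For the predictor to be robust, Lemma~\ref{lem:finite-condition} requires $\int p_{\mathbf{y}_k\mid\mathbf{y}_{1:k-1}}(s\mid y_{1:k-1})\log\hat{q}_k(s)\,\dd s>-\infty$ for every conditional distribution of $\mathbf{y}_k$ consistent with the dynamics and with $\operatorname{supp}(\mathbf{y}_k)$; taking $p$ with full support on $\operatorname{supp}(\mathbf{y}_k)$ shows $\hat{q}_k$ must be positive almost everywhere there, hence $B\supseteq\operatorname{supp}(\mathbf{y}_k)$. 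Normalization $\int_{B}c\,\dd s=1$ then forces $c=1/\mathrm{vol}(B)$, so for any feasible $p$ supported in $\operatorname{supp}(\mathbf{y}_k)\subseteq B$ the one-step objective equals the constant $\int p\log c\,\dd s=-\log\mathrm{vol}(B)$, independent of $p$ and of $\mathbf{u}$; by Lemma~\ref{lem:finite-condition} the worst-case expected log-likelihood functional is then the (finite) sum of these constants over $k$.

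Finally, maximizing $-\log\mathrm{vol}(B)$ over the admissible boxes $B\supseteq\operatorname{supp}(\mathbf{y}_k)$ is uniquely achieved at $B=\operatorname{supp}(\mathbf{y}_k)$, which gives $c=\prod_{i=1}^{d_y}\big(\bar{y}_k^{(i)}-\underline{y}_k^{(i)}\big)^{-1}$ and hence the claimed form of $\mathscr{F}_k^\star$. I do not expect a genuine obstacle: the only step that needs a careful line is the domain argument — that enlarging the support of $\hat{q}_k$ strictly decreases $-\log\mathrm{vol}(B)$, while shrinking it below $\operatorname{supp}(\mathbf{y}_k)$ destroys robustness — after which the conclusion is pure substitution.
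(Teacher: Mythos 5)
Your argument is correct and is essentially the route the paper intends: the paper states this theorem without any proof, calling the derivation ``trivial'' once the preceding lemma has identified the $m=0$ robust form $e^{\lambda_0}$ as a uniform density on a necessarily bounded support. Your filling-in of the details --- that robustness forces the predictor's region of positivity $B$ to contain $\operatorname{supp}(\mathbf{y}_k)$ up to null sets, that normalization then fixes the constant at $1/\mathrm{vol}(B)$, and that the one-step score $-\log\mathrm{vol}(B)$ is maximized by shrinking $B$ down to $\operatorname{supp}(\mathbf{y}_k)$ --- is exactly the omitted computation and contains no gaps.
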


\subsubsection{Information Set of Second Order} Contrary to the zeroth order information set, the statistical information contained in $\mathcal{I}_2$ is much more abundant. When the second moments are utilized, second-moment robust probabilistic predictors always exist even if there is no prior knowledge of the supports. For the ease of writing, we make a few simplifications to the notations. In the rest of this section, we use
\begin{equation}
   \left\{\begin{aligned}
       & \Sigma \!=\! \mu_2(\mathbf{y}_k \!\mid\! y_{1:k-1}) \!-\! \mu_1(\mathbf{y}_k \!\mid\! y_{1:k-1})\mu_1(\mathbf{y}_k \!\mid\! y_{1:k-1})^\top \\
       & z = \mu_1(\mathbf{y}_k \mid y_{1:k-1}).
   \end{aligned}\right.
\end{equation}

\begin{theorem}\label{thm:maxmin-2}
   If the information set is of the second order, a robust probabilistic predictor exists. The output of the optimal second-moment robust probabilistic predictor $\mathscr{F}_k^\star$ is a Gaussian distribution, such that
   \begin{align*}
      \hat{p}_{\mathbf{y}_{k} \mid \mathbf{y}_{1:k-1}}^\star(s \mid y_{1:k-1})= & (2 \pi)^{-{d_y\over 2}} \operatorname{det}(\Sigma)^{-{1\over 2}}   \\
         & \times e^{\left(-\frac{1}{2}(s-z)^{\top} \Sigma^{-1}(s-z)\right)}.
   \end{align*}
\end{theorem}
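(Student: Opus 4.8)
The plan is to cast the problem as a constrained variational optimization over $\hat{q}_k = \hat{p}_{\mathbf{y}_k\mid\mathbf{y}_{1:k-1}}$ and exploit the structure already forced on us by Theorem \ref{thm:robust-form}. First I would invoke Theorem \ref{thm:robust-form} with $m=2$: any robust predictor under $\mathcal{I}_2$ must have the polynomial-exponential form $\hat{q}_k(s) = \exp\bigl(\lambda_0 + \sum_{|\alpha|=1}\lambda_\alpha s^\alpha + \sum_{|\alpha|=2}\lambda_\alpha s^\alpha\bigr)$, i.e. the exponential of a quadratic in $s$. For this to be a normalizable density on $\R^{d_y}$ the quadratic form must be negative definite, so $\hat{q}_k$ is necessarily a (possibly degenerate-free) Gaussian density $\mathcal{N}(\hat{z},\hat{\Sigma})$ for some mean $\hat{z}$ and positive-definite $\hat{\Sigma}$. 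Thus the max-min reduces to a finite-dimensional optimization over $(\hat{z},\hat{\Sigma})$; existence of a robust predictor is then immediate because the family is nonempty whenever $\Sigma\succ0$.

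Next I would compute the worst-case one-step score for a fixed Gaussian $\hat{q}_k=\mathcal{N}(\hat{z},\hat{\Sigma})$. Since $\log\hat{q}_k(s) = -\tfrac{d_y}{2}\log(2\pi) - \tfrac12\log\det\hat{\Sigma} - \tfrac12(s-\hat{z})^\top\hat{\Sigma}^{-1}(s-\hat{z})$ is exactly a degree-$2$ polynomial in $s$, the inner expectation $\int p_{\mathbf{y}_k\mid\mathbf{y}_{1:k-1}}(s)\log\hat{q}_k(s)\,\dd s$ depends on $p$ \emph{only} through its first two moments, which are pinned to $z$ and $\Sigma+zz^\top$ by the constraints of the auxiliary problem \eqref{eq:maxmin-aux}. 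Hence the inner minimization over $p$ is vacuous — every feasible $p$ gives the same value — and the objective equals $-\tfrac{d_y}{2}\log(2\pi) - \tfrac12\log\det\hat{\Sigma} - \tfrac12\tr(\hat{\Sigma}^{-1}\Sigma) - \tfrac12(z-\hat{z})^\top\hat{\Sigma}^{-1}(z-\hat{z})$. This is, up to an additive constant, $-D_{KL}\bigl(\mathcal{N}(z,\Sigma)\,\Vert\,\mathcal{N}(\hat{z},\hat{\Sigma})\bigr) - \mathrm{H}\bigl(\mathcal{N}(z,\Sigma)\bigr)$, which is consistent with Theorem \ref{thm:evl-1}.

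Then I would maximize this expression over $(\hat{z},\hat{\Sigma})$. The term $(z-\hat{z})^\top\hat{\Sigma}^{-1}(z-\hat{z})\ge0$ is killed by taking $\hat{z}=z$. For $\hat{\Sigma}$, maximizing $-\tfrac12\log\det\hat{\Sigma} - \tfrac12\tr(\hat{\Sigma}^{-1}\Sigma)$ is the classical Gaussian MLE/KL-projection computation: setting the gradient to zero (using $\nabla_{\hat\Sigma}\log\det\hat\Sigma = \hat\Sigma^{-1}$ and $\nabla_{\hat\Sigma}\tr(\hat\Sigma^{-1}\Sigma) = -\hat\Sigma^{-1}\Sigma\hat\Sigma^{-1}$) gives $\hat\Sigma=\Sigma$, and concavity in the right parametrization (or the nonnegativity of KL-divergence, Corollary \ref{cor:upper-bound-1}) confirms this is the global maximizer. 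Substituting $\hat{z}=z$, $\hat\Sigma=\Sigma$ yields exactly the claimed density $\hat{p}_{\mathbf{y}_k\mid\mathbf{y}_{1:k-1}}^\star(s\mid y_{1:k-1}) = (2\pi)^{-d_y/2}\det(\Sigma)^{-1/2}\exp\bigl(-\tfrac12(s-z)^\top\Sigma^{-1}(s-z)\bigr)$, and the optimal worst-case score is $-\mathrm{H}\bigl(\mathcal{N}(z,\Sigma)\bigr)$.

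The main obstacle I anticipate is handling degenerate or unbounded cases cleanly: one must argue that $\Sigma\succ0$ (strict positive-definiteness) so that $\hat\Sigma=\Sigma$ is admissible and $\det\Sigma\ne0$ — if $\Sigma$ is only positive-semidefinite the optimal $\hat\Sigma$ drives $\det\hat\Sigma\to0$ and the supremum is $+\infty$, not attained, so a genuinely optimal predictor fails to exist in that edge case and the statement should be read under the implicit nondegeneracy assumption. A secondary technical point is justifying the interchange of the outer $\max_{\mathscr{F}}$ with the per-step decomposition from Lemma \ref{lem:finite-condition} and Lemma \ref{lem:aux}, so that optimizing each one-step Gaussian independently actually yields the optimal online predictor; this is where the additive, step-separable structure of $\mathcal{L}$ in Theorem \ref{thm:evl-1} does the work, since each term $\E_{y_{1:k-1}}D_{KL}(q_k\Vert\hat q_k)$ is minimized pointwise in $y_{1:k-1}$ by the Gaussian-moment-matching choice above.
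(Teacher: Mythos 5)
Your proposal is correct and follows essentially the same route as the paper's Appendix E: invoke Theorem \ref{thm:robust-form} to restrict to Gaussian candidates, expand the one-step score into $-\tfrac{d_y}{2}\log(2\pi)-\tfrac12\log\det\hat{\Sigma}-\tfrac12\langle\Sigma,\hat{\Sigma}^{-1}\rangle$ minus a nonnegative mean-mismatch term, then set $\hat{z}=z$ and differentiate in $\hat{\Sigma}^{-1}$ (equivalently $\hat{\Sigma}$) to get $\hat{\Sigma}=\Sigma$. Your added remarks on the vacuity of the inner minimization and the nondegeneracy of $\Sigma$ are sound refinements the paper leaves implicit, but they do not change the argument.
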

\begin{proof}
   Please see Appendix \ref{app:thm:maxmin-2}.
\end{proof}

Compared to the zeroth-order moment robust probabilistic predictor, the optimal second-order moment robust probabilistic predictor improved the prediction performance prominently. As shown in the proof of Theorem \ref{thm:maxmin-2}, given the previous trajectory $y_{1:k-1}$, the maximal one-step log-likelihood at time $k$ is \[-\frac{d_y}{2}\log(2\pi) \!-\! \frac{1}{2}\log\operatorname{det}(\Sigma) \!-\! \frac{1}{2}\langle \Sigma,\Sigma^{-1}\rangle.\]

It should be noted that attaining optimality requires accurate estimations of $z$ and $\Sigma$. When the estimations are not accurate, Appendix \ref{app:thm:maxmin-2} shows that the prediction performance is \[-\frac{d_y}{2}\log(2\pi) \!-\! \frac{1}{2}\log\operatorname{det}(\hat{\Sigma}) \!-\! \frac{1}{2}\langle \Sigma,\hat{\Sigma}^{-1}\rangle \!-\! \|z \!-\! \hat{z}\|_{\hat{\Sigma}^{-1}}^2.\] Therefore, a too-small covariance (in the sense of determinant) and an inaccurate estimation of expectation can lead to performance decreasing polynomially fast.

Practically, real-world noises usually possess a heavy-tailed feature, where the second-order moments may not exist. Unless a strong guarantee is provided on the existence of second-order moments, it is generally too ideal to use a robust probabilistic predictor of second or even higher moments.

\subsubsection{Information Set of First Order} $\mathcal{I}_1$ characterizes one of the most common situations in practice where the predictor is confident that the expectation of the input, process noises and observation noises exist, but doubts the existence of their covariances (or cannot have confident estimations of the covariances). The example in the previous section has shown the bad effect of an unrobust predictor that inappropriately uses Gaussian distribution to predict a long-tail distribution.

Like the zeroth moment robust probabilistic predictor, the existence of a first moment robust probabilistic predictor requires additional information of $\operatorname{supp}(\mathbf{y}_k)$.
\begin{lemma}
   If the information set is of the first order, a first-moment robust probabilistic predictor exists if and only if the support of $\mathbf{y}_{k}$ where $1\leq k \leq n$ is not both upper and lower unbounded.
\end{lemma}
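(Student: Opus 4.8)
The plan is to reduce the claim to a one-line normalization question about the degree-one member of the family in Theorem~\ref{thm:robust-form}. By that theorem, a first-moment robust probabilistic predictor is precisely a map of the form $\mathscr{F}_k(y_{1:k-1};\mathcal{I}_1)=\hat p(s)=e^{\lambda_0+\lambda^\top s}$ with $\lambda_0\in\R$ and $\lambda\in\R^{d_y}$, and such an $\hat p$ is an admissible predictor exactly when it is a genuine probability density on $\operatorname{supp}(\mathbf{y}_k)$, i.e. when $C:=\int_{\operatorname{supp}(\mathbf{y}_k)}e^{\lambda^\top s}\,\dd s\in(0,\infty)$ for some choice of $\lambda$ (then put $\lambda_0=-\log C$). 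Since $\operatorname{supp}(\mathbf{y}_k)=\prod_{i=1}^{d_y}[\underline{y}_k^{(i)},\bar{y}_k^{(i)}]$ is a box --- which I will assume non-degenerate, $\underline{y}_k^{(i)}<\bar{y}_k^{(i)}$, so that a Lebesgue density on it is meaningful --- Fubini factorizes the question coordinatewise: $C=\prod_{i=1}^{d_y}\int_{\underline{y}_k^{(i)}}^{\bar{y}_k^{(i)}}e^{\lambda^{(i)}t}\,\dd t$, and it suffices to analyze each factor. I will carry this out for each step index $k$, which via Lemma~\ref{lem:finite-condition} is exactly what is needed.

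For the ``if'' direction I would exhibit $\lambda$ explicitly. Under the hypothesis, every coordinate is bounded on at least one side. For coordinate $i$: if $\bar{y}_k^{(i)}<\infty$ take $\lambda^{(i)}=1$, so the $i$-th factor equals $e^{\bar{y}_k^{(i)}}-e^{\underline{y}_k^{(i)}}<\infty$ (reading $e^{-\infty}=0$); otherwise $\bar{y}_k^{(i)}=+\infty$, so necessarily $\underline{y}_k^{(i)}>-\infty$, and $\lambda^{(i)}=-1$ makes the $i$-th factor $e^{-\underline{y}_k^{(i)}}<\infty$. Every factor is finite and strictly positive, hence $C\in(0,\infty)$, and $\hat p=C^{-1}e^{\lambda^\top s}$ on $\operatorname{supp}(\mathbf{y}_k)$ is a valid pdf of polynomial-exponential order $1$; Theorem~\ref{thm:robust-form} then certifies that the resulting $\mathscr{F}_k$ is robust under $\mathcal{I}_1$.

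For the ``only if'' direction I would argue by contraposition. If the support is both upper and lower unbounded there is a coordinate $i_0$ with $\underline{y}_k^{(i_0)}=-\infty$ and $\bar{y}_k^{(i_0)}=+\infty$. For any $\lambda^{(i_0)}\in\R$ the $i_0$-th factor $\int_{-\infty}^{+\infty}e^{\lambda^{(i_0)}t}\,\dd t$ is $+\infty$: it diverges at $+\infty$ when $\lambda^{(i_0)}>0$, at $-\infty$ when $\lambda^{(i_0)}<0$, and equals $\int_{-\infty}^{+\infty}1\,\dd t=+\infty$ when $\lambda^{(i_0)}=0$. As the remaining factors are positive, $C=+\infty$ for every $\lambda$, so no normalizable $\hat p$ of the required form exists, and hence no first-moment robust probabilistic predictor exists.

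Nothing here is computationally heavy; the points that need care are (i) using Theorem~\ref{thm:robust-form} on both sides --- as the mandatory form of any first-moment robust predictor and as the sufficient condition for robustness once an admissible pdf is produced --- and (ii) the $\lambda^{(i_0)}=0$ sub-case of necessity, where divergence comes from the infinite Lebesgue measure of the line rather than from exponential growth. The only modelling assumption I would flag explicitly is the non-degeneracy of the support box in each coordinate.
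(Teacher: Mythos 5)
Your proposal is correct and follows essentially the same route as the paper: invoke Theorem~\ref{thm:robust-form} to pin down the degree-one polynomial-exponential form $e^{\lambda_0+\lambda^\top s}$ and then reduce existence to normalizability of $\int_{\operatorname{supp}(\mathbf{y}_k)}e^{\lambda^\top s}\,\dd s$ over the support box. The paper's own proof is only a two-line sketch of the ``only if'' direction (the exponential density cannot be defined on $(-\infty,\infty)$), so your coordinatewise Fubini factorization, the explicit choice of $\lambda$ for the ``if'' direction, and the $\lambda^{(i_0)}=0$ sub-case are welcome completions rather than departures.
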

\begin{proof}
   When $m=1$, there is $\hat{p}_{\mathbf{y}_k \mid \mathbf{y}_{1:k-1}}(s \mid y_{1:k-1}) = e^{\lambda_1^\top s + \lambda_0^\top \mathbf{1}}$ with $\lambda_0, \lambda_1 \in \R^{d_y}$, which is a multivariate exponential distribution. If $\operatorname{supp}(\mathbf{y}_k) = (-\infty, \infty)$, the exponential distribution cannot be defined. It can be concluded that there is no first-moment robust probabilistic predictor if $\operatorname{supp}(\mathbf{y}_k)$ is not both upper and lower unbounded.
\end{proof}
The next theorem shows that a first-moment robust probabilistic predictor should use exponential distribution based on the knowledge of the first-order moments.
Let $x^*$ be the non-zero solution to the equation \[ [(\bar{y}_k^{(i)} \!-\! z^{(i)})x \!-\! 1]e^{(\bar{y}_k^{(i)} \!-\! \underline{y}_k^{(i)})x} \!-\! (\underline{y}_k^{(i)} \!-\! z^{(i)})x \!+\! 1 = 0, \] where $z = \mu_1(\mathbf{y}_k \mid y_{1:k-1})$. We define the following notations:
\begin{equation*}
   \begin{aligned}
      \lambda_1^{(i)} & \!=\!\left\{ \begin{array}{lll}
                                         & \left(\underline{y}_k^{(i)} - z^{(i)}\right)^{-1} & \bar{y}_k^{(i)} = \infty        \\
                                         & \left(\bar{y}_k^{(i)} - z^{(i)}\right)^{-1}       & \underline{y}_k^{(i)} = -\infty \\
                                         & x^*                                               & \text{else},
                                     \end{array}\right.                                                                     \\
      \lambda_0^{(i)} & \!=\!\left\{ \begin{array}{ll}
                                        -\log\left(z^{(i)} - \underline{y}_k^{(i)}\right) + \frac{\underline{y}_k^{(i)}}{z^{(i)} - \underline{y}_k^{(i)}}    & \bar{y}_k^{(i)} \!=\! \infty        \\
                                        -\log\left(\bar{y}_k^{(i)} - z^{(i)}\right) + \frac{\bar{y}_k^{(i)}}{z^{(i)} - \bar{y}_k^{(i)}}                      & \underline{y}_k^{(i)} \!=\! -\infty \\
                                        -\log\left(e^{\bar{y}_k^{(i)}\lambda_1^{(i)}} - e^{\underline{y}_k^{(i)}\lambda_1^{(i)}}\right) +\log\lambda_1^{(i)} & \text{else}.
                                     \end{array}\right.
   \end{aligned}
\end{equation*}

\begin{theorem}\label{thm:maxmin-1}
   The output of the optimal first-moment robust probabilistic predictor $\mathscr{F}_k^\star$ is an exponential distribution,
   \begin{equation*}
      \hat{p}_{\mathbf{y}_k \mid \mathbf{y}_{1:k-1}}^\star(s\mid y_{1:k-1}) \!=\!\! \left\{\!\! \begin{array}{cl}
         \prod_{i=1}^{d_y} e^{\lambda_1^{(i)}s^{(i)} + \lambda_0^{(i)}} & s \in \operatorname{supp}(\mathbf{y}_k) \\
         0                                                              & \text{else}.
      \end{array}\right.
   \end{equation*}
\end{theorem}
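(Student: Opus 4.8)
The plan is to exploit a collapse of the inner minimization. Fix any predictor of the robust form guaranteed by Theorem~\ref{thm:robust-form} for $m=1$, i.e. $\hat p_{\mathbf y_k\mid\mathbf y_{1:k-1}}(s\mid y_{1:k-1})=e^{\lambda_1^\top s+\lambda_0^\top\mathbf 1}$ on $\operatorname{supp}(\mathbf y_k)$ and $0$ outside, with $\lambda_0,\lambda_1\in\R^{d_y}$. Every $p$ feasible for \eqref{eq:maxmin-aux} is supported inside the box $\operatorname{supp}(\mathbf y_k)$, on which $\log\hat p$ is the affine map $s\mapsto\lambda_1^\top s+\lambda_0^\top\mathbf 1$; hence, using only $\int p=1$ and the first-moment constraint $\mu_1(\mathbf y_k\mid y_{1:k-1})=z$,
\[
\int p_{\mathbf y_k\mid\mathbf y_{1:k-1}}(s\mid y_{1:k-1})\,\log\hat p_{\mathbf y_k\mid\mathbf y_{1:k-1}}(s\mid y_{1:k-1})\,\dd s=\lambda_1^\top z+\lambda_0^\top\mathbf 1 ,
\]
independent of $p$ (the feasible set for $p$ is nonempty since $z$, being a genuine conditional expectation, lies in the box). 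So the worst case over $p$ equals this deterministic value, and by Lemma~\ref{lem:aux} the optimal $\mathcal I_1$-robust predictor is obtained by solving $\max_{\lambda_0,\lambda_1}\ \lambda_1^\top z+\lambda_0^\top\mathbf 1$ subject to $\int_{\operatorname{supp}(\mathbf y_k)}e^{\lambda_1^\top s+\lambda_0^\top\mathbf 1}\,\dd s=1$.

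Second, I would decompose this into $d_y$ scalar problems. As $\operatorname{supp}(\mathbf y_k)$ is a product of intervals, the normalization factorizes; and since replacing $\lambda_0^{(i)}\mapsto\lambda_0^{(i)}+c_i$ with $\sum_ic_i=0$ leaves both objective and feasibility unchanged, an optimum may be taken with each coordinate's integral equal to $1$. Eliminating $\lambda_0^{(i)}=-\log\int_{\underline y_k^{(i)}}^{\bar y_k^{(i)}}e^{\lambda_1^{(i)}t}\,\dd t$, coordinate $i$ reduces to $\max_a g(a)$ with $g(a)=az^{(i)}-\log\int_{\underline y_k^{(i)}}^{\bar y_k^{(i)}}e^{at}\,\dd t$. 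The map $a\mapsto\log\int e^{at}\dd t$ is the log-partition function of the one-parameter exponential family $q_a(t)\propto e^{at}$ on the interval, so it is convex (its second derivative is the variance under $q_a$), $g$ is concave, and stationarity $g'(a)=0$ reads $z^{(i)}=\E_{q_a}[t]$. The mean $a\mapsto\E_{q_a}[t]$ is a strictly increasing bijection from the admissible range of $a$ onto the interior of the support, so there is a unique stationary point, which is the global maximizer.

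Third comes the case analysis matching the stated formulas. If $\bar y_k^{(i)}=\infty$ (and $\underline y_k^{(i)}$ finite), $\int_{\underline y_k^{(i)}}^{\infty}e^{at}\dd t$ converges only for $a<0$ and equals $-e^{a\underline y_k^{(i)}}/a$, so $g(a)=a(z^{(i)}-\underline y_k^{(i)})+\log(-a)$ and $g'(a)=0$ gives $\lambda_1^{(i)}=(\underline y_k^{(i)}-z^{(i)})^{-1}<0$; the symmetric case $\underline y_k^{(i)}=-\infty$ gives $\lambda_1^{(i)}=(\bar y_k^{(i)}-z^{(i)})^{-1}$. Plugging back into $\lambda_0^{(i)}=-\log\int e^{\lambda_1^{(i)}t}\dd t$ yields the stated closed forms. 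When both endpoints are finite, $\int_{\underline y_k^{(i)}}^{\bar y_k^{(i)}}e^{at}\dd t=(e^{a\bar y_k^{(i)}}-e^{a\underline y_k^{(i)}})/a$; clearing denominators in $g'(a)=0$ and rearranging gives exactly the transcendental equation defining $x^\ast$, with $x=a$, and $\lambda_0^{(i)}$ then equals $-\log(e^{\bar y_k^{(i)}\lambda_1^{(i)}}-e^{\underline y_k^{(i)}\lambda_1^{(i)}})+\log\lambda_1^{(i)}$ from the normalization. Assembling the $d_y$ coordinate solutions produces the claimed product density, which is feasible, hence $\mathcal I_1$-robust by Theorem~\ref{thm:robust-form}, and attains the maximum by construction.

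I expect the bounded-box case to be the main obstacle: one must argue that the transcendental equation has a unique non-zero root (equivalently, that $\E_{q_a}[t]=z^{(i)}$ has a unique non-zero solution), that this root---and not the trivial root $a=0$ corresponding to the uniform law---is the concave maximizer, its sign being that of $z^{(i)}-\tfrac12(\underline y_k^{(i)}+\bar y_k^{(i)})$, and that the degenerate midpoint case $z^{(i)}=\tfrac12(\underline y_k^{(i)}+\bar y_k^{(i)})$ (where the optimum is simply uniform) is handled separately; one must also track sign conventions so that the logarithms in the formula for $\lambda_0^{(i)}$ remain well defined when $\lambda_1^{(i)}<0$. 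The half-line cases and the concavity/monotonicity argument are routine once the problem has been reduced to the scalar function $g$.
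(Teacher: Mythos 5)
Your proposal is correct and follows essentially the same route as the paper's proof: reduce to the per-coordinate problem $\max_{\lambda_0^{(i)},\lambda_1^{(i)}}\ \lambda_1^{(i)}z^{(i)}+\lambda_0^{(i)}$ under the normalization constraint, eliminate $\lambda_0^{(i)}$, and split into the two half-line cases plus the bounded case that ends in the transcendental equation $J(x)=0$. The extra material you supply --- the observation that the inner minimization over $p$ collapses because the objective depends on $p$ only through its total mass and its mean, and the log-partition concavity/monotonicity argument for uniqueness of the non-zero root (including the degenerate midpoint case where the uniform law is optimal) --- consists of details the paper's proof asserts without justification, so it strengthens rather than diverges from the paper's argument.
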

\begin{proof}
   Please see Appendix \ref{app:thm:maxmin-1}.
\end{proof}

Theorem \ref{thm:maxmin-1} implies that the first-order moment robust probabilistic predictor has two limitations. First, designing a first-order moment robust probabilistic predictor requires the support of each dimension to be at least half-bounded. This inherent limitation of the polynomial moment-based robust probabilistic predictor makes it incapable of handling situations where no support information is available. Second, the optimal performance of the predictor is attained if and only if $z$ can be accurately estimated by the predictor. Moreover, this performance may be highly sensitive to the estimation of $z$ as analyzed in the following example.
\begin{example}
   Consider a one-dimensional first-order moment robust probabilistic predictor at time step $k$ with $\operatorname{supp}(\mathbf{y}_k) = [\underline{y}_k, \infty)$, there is
   \begin{equation*}
      \begin{aligned}
          & \frac{e^{\lambda_0}}{\lambda_1}(0 - e^{\lambda_1\underline{y}_k}) = 1 \Rightarrow \lambda_0 = \log(-\lambda_1) - \lambda_1\underline{y}_k.
      \end{aligned}
   \end{equation*}
   Using this equality, the one-step log-likelihood follows as
   \begin{equation*}
      \begin{aligned}
          & \lambda_1 z + \lambda_0 = \lambda_1(z - \underline{y}_k) + \log(-\lambda_1),
      \end{aligned}
   \end{equation*}
   which is maximized when $\lambda_1 = (\underline{y}_k - z)^{-1}$. However, the function \[\Gamma(x) := x(z - \underline{y}_k) + \log(-x)\] approaches $-\infty$ exponentially fast around $0$. It implies that an inaccurate estimation of $z$ risks significantly degrading the performance of a first-order moment robust probabilistic predictor.
\end{example}

\subsection{Non-polynomial Moment Robust Probabilistic Predictor}
The polynomial moment robust probabilistic predictor is derived from the optimization problem \ref{eq:maxmin-aux}, where the moment $\mu_\alpha(\mathbf{y}_k \mid y_{1:k-1})$ is utilized to constrain the conditional distribution $p_{\mathbf{y}_k \mid \mathbf{y}_{1:k-1}}$. Specifically, the first-moment robust probabilistic predictor uses $z$, and it suffers from the limitations of support knowledge and instability.
To mediate these two limitations, a natural idea is to exploit more statistical features from $\mathcal{I}_1$ and use them to further constrain $p_{\mathbf{y}_k \mid \mathbf{y}_{1:k-1}}$.

There is some non-polynomial moment information about $p_{\mathbf{y}_k \mid \mathbf{y}_{1:k-1}}$ can be exploited from $\mathcal{I}_1$, as shown in the following lemma.
\begin{lemma}\label{lem:nonpoly-moment}
   Given a first order information set, we have

   (i) the first-order absolute value moment exists, i.e., \[\int |s|\, p_{\mathbf{y}_k \mid \mathbf{y}_{1:k-1}}(s\mid y_{1:k-1}) \dd s < \infty\]

   (ii) the quadratic logarithm moment exists, i.e., \[\int \log(1 + s^\top s)\, p_{\mathbf{y}_k \mid \mathbf{y}_{1:k-1}}(s\mid y_{1:k-1}) \dd s < \infty.\]
\end{lemma}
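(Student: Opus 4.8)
The plan is to reduce both assertions to the fact already established in the excerpt that a first order information set forces the first-order moments of the conditional law $p_{\mathbf{y}_k\mid\mathbf{y}_{1:k-1}}$ to exist; once that is in hand, everything else is an elementary pointwise majorization. First I would invoke that lemma: $\mathcal{I}_1$ implies that the moments of $p_{\mathbf{y}_k\mid\mathbf{y}_{1:k-1}}$ of order at most $1$ exist. Reading ``exists'' in the absolute-integrability sense forced by the definition of $\mathcal{I}_m$ (a Lebesgue integral $\mu_\alpha$ is a finite number precisely when $\int|s^\alpha|p\,\dd s<\infty$; this is exactly the distinction invoked for the Cauchy / order-$1$ Student's t case), this says $\int |s^{(i)}|\,p_{\mathbf{y}_k\mid\mathbf{y}_{1:k-1}}(s\mid y_{1:k-1})\,\dd s<\infty$ for each coordinate $i=1,\dots,d_y$, and hence $\int \big(\sum_i |s^{(i)}|\big)\,p_{\mathbf{y}_k\mid\mathbf{y}_{1:k-1}}(s\mid y_{1:k-1})\,\dd s<\infty$.

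For part (i), I would simply use that all norms on $\R^{d_y}$ are comparable, in particular $|s|\le \sum_{i=1}^{d_y}|s^{(i)}|$ (Euclidean norm on the left; equality if $|\cdot|$ is the $\ell^1$ norm). Integrating this inequality against $p_{\mathbf{y}_k\mid\mathbf{y}_{1:k-1}}(\cdot\mid y_{1:k-1})$ and using the previous display gives the claim.

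For part (ii), I would first record the scalar inequality $\log(1+t^2)\le 2t$ for all $t\ge 0$: it follows from $1+t^2\le(1+t)^2$, hence $\log(1+t^2)\le 2\log(1+t)$, together with $\log(1+t)\le t$. Applying it with $t=|s|$ (Euclidean norm, so that $s^\top s=|s|^2$) yields the pointwise bound $\log(1+s^\top s)\le 2|s|$; integrating against $p_{\mathbf{y}_k\mid\mathbf{y}_{1:k-1}}(\cdot\mid y_{1:k-1})$ reduces the statement to part (i).

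Since the real content is carried by the already-proved lemma on moment propagation through the dynamics, I do not expect a genuine obstacle. The only two points requiring a line of care are: fixing the convention that ``the first-order moment exists'' means $L^1$-integrability (so that the coordinatewise bound used in (i) is legitimate rather than a statement about a conditionally convergent principal value), and choosing majorants --- $|s|\le\sum_i|s^{(i)}|$ and $\log(1+t^2)\le 2t$ --- that trade the slow (logarithmic, respectively single-power Euclidean) growth appearing in the statements against precisely the $\ell^1$-type control that $\mathcal{I}_1$ supplies.
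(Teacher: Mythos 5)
Your proposal is correct and follows essentially the same route as the paper's own proof: part (i) rests on reading ``the first moment exists'' as absolute integrability (the paper phrases this via $\E X^{+}<\infty$ and $\E X^{-}<\infty$), and part (ii) majorizes $\log(1+s^\top s)$ by a linear function of the norm. The only cosmetic difference is in (ii), where you use the global bound $\log(1+t^2)\le 2t$ while the paper splits the integral at $\|s\|=2$ and uses $\log(1+\|s\|^2)<\|s\|$ on the unbounded piece; both reduce the claim to part (i).
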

\begin{proof}
   Please see Appendix \ref{app:lem:nonpoly-moment}.
\end{proof}

Similar to the design of polynomial moment-based predictors, a first-order absolute value moment predictor can defined such that the one-step predictor equals a multivariate Laplace distribution, i.e.,
\[
   \hat{p}_{\mathbf{y}_k \mid \mathbf{y}_{1:k-1}}(s\mid y_{1:k-1})=\prod_{i=1}^{d_y} \frac{\lambda_1^{(i)}}{2}e^{-\lambda_1^{(i)}|s^{(i)} - \lambda_0^{(i)}|},
\]
where $\lambda_1^{(i)} < 0$ and the support is unbounded.

Then, we define a quadratic logarithm moment predictor such that $\hat{p}_{\mathbf{y}_k \mid \mathbf{y}_{1:k-1}}(s\mid y_{1:k-1})$ equals a multivariate t distribution:
$$
   \frac{\Gamma[(\nu+d_y) / 2]}{\Gamma(\nu / 2) \nu^{d_y / 2} \pi^{d_y / 2}|{\Sigma}|^{1 / 2}}\left[1\!+\!\frac{1}{\nu}(s\!-\!\lambda_1)^T {\Sigma}^{-1}(s\!-\!\lambda_1)\right]^{\frac{-(\nu\!+\!d_y)}{2}},
$$
where $\nu > 0$.

Based on Lemma \ref{lem:nonpoly-moment}, it follows immediately that both these two predictors are robust.
\begin{theorem}
   Given $\mathcal{I}_1$, the first-order absolute value moment probabilistic predictor and the quadratic logarithm moment probabilistic predictor are robust. 
\end{theorem}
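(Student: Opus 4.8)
The plan is to reduce the claim to the one-step robustness criterion already established and then verify it directly for each of the two candidate predictors using the non-polynomial moment bounds from Lemma~\ref{lem:nonpoly-moment}. By Lemma~\ref{lem:finite-condition} and Lemma~\ref{lem:aux}, it suffices to show that for each proposed $\hat{p}_{\mathbf{y}_k\mid\mathbf{y}_{1:k-1}}$, the one-step objective
\[
   \int p_{\mathbf{y}_k\mid\mathbf{y}_{1:k-1}}(s\mid y_{1:k-1})\log \hat{p}_{\mathbf{y}_k\mid\mathbf{y}_{1:k-1}}(s\mid y_{1:k-1})\,\dd s > -\infty
\]
holds for every conditional distribution $p_{\mathbf{y}_k\mid\mathbf{y}_{1:k-1}}$ that is consistent with $\mathcal{I}_1$. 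Since $\log\hat{p}$ is the relevant quantity, I would write it out explicitly for each predictor and bound its expectation from below using only the statistical features that $\mathcal{I}_1$ guarantees about $p_{\mathbf{y}_k\mid\mathbf{y}_{1:k-1}}$, namely (by Lemma~\ref{lem:nonpoly-moment}) the finiteness of $\int |s|\,p\,\dd s$ and of $\int \log(1+s^\top s)\,p\,\dd s$.

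For the first-order absolute value moment predictor (the multivariate Laplace), $\log\hat{p}_{\mathbf{y}_k\mid\mathbf{y}_{1:k-1}}(s) = \sum_{i=1}^{d_y}\left(\log(\lambda_1^{(i)}/2) - \lambda_1^{(i)}|s^{(i)}-\lambda_0^{(i)}|\right)$ (with the sign conventions of the excerpt so that the density is normalizable), which is affine in the quantities $|s^{(i)}-\lambda_0^{(i)}|$. Using the triangle inequality $|s^{(i)}-\lambda_0^{(i)}| \leq |s^{(i)}| + |\lambda_0^{(i)}| \leq |s| + |\lambda_0^{(i)}|$ and taking expectation against $p$, the integral is bounded below by a finite constant plus a finite multiple of $\int |s|\,p\,\dd s$, which is finite by Lemma~\ref{lem:nonpoly-moment}(i). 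For the quadratic logarithm moment predictor (the multivariate $t$), $\log\hat{p}_{\mathbf{y}_k\mid\mathbf{y}_{1:k-1}}(s)$ equals a constant minus $\tfrac{\nu+d_y}{2}\log\!\left(1+\tfrac{1}{\nu}(s-\lambda_1)^\top\Sigma^{-1}(s-\lambda_1)\right)$; here I would bound the log-term by $C_1 + C_2\log(1+s^\top s)$ for suitable finite constants $C_1,C_2$ depending on $\nu,d_y,\Sigma,\lambda_1$ (exploiting that $\log(1+\|A(s-b)\|^2) \leq C_1 + C_2\log(1+\|s\|^2)$ because polynomially comparable arguments differ by a bounded additive amount after taking $\log$), so that the one-step objective is bounded below by a finite constant minus a finite multiple of $\int \log(1+s^\top s)\,p\,\dd s$, finite by Lemma~\ref{lem:nonpoly-moment}(ii).

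Once both one-step objectives are shown to be uniformly bounded below over all $\mathcal{I}_1$-consistent $p$, Lemma~\ref{lem:aux} immediately upgrades this to $\min_{\mathbf{u}}\mathcal{L}(\mathscr{F},\mathbf{y}_{1:n}) > -\infty$, which is precisely the definition of a robust probabilistic predictor. The main obstacle I anticipate is purely technical rather than conceptual: establishing the comparison inequality $\log(1+(s-\lambda_1)^\top\Sigma^{-1}(s-\lambda_1)/\nu) \leq C_1 + C_2\log(1+s^\top s)$ with explicit finite constants, uniformly in $s$, which requires a short argument that an affine change of variables composed with a positive-definite quadratic form changes the logarithm of $1+\|\cdot\|^2$ only by a bounded additive term plus a bounded multiplicative factor (one direction handles large $\|s\|$ via eigenvalue bounds on $\Sigma^{-1}$, the other handles $\|s\|$ near $0$ by compactness). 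A secondary subtlety is making sure the sign conventions on $\lambda_1^{(i)}$ in the Laplace case are consistent so that $\log\hat{p}$ is genuinely dominated by an affine function of $|s|$; with that in hand the argument is a direct estimate and no further machinery is needed.
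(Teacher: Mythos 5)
Your proposal is correct and follows essentially the same route the paper takes: the paper states that the result "follows immediately" from Lemma~\ref{lem:nonpoly-moment} via the one-step reduction of Lemma~\ref{lem:finite-condition} and the auxiliary-problem argument of Lemma~\ref{lem:aux}, and your write-up simply supplies the routine estimates (the triangle inequality for the Laplace case and the comparison $\log\bigl(1+\tfrac{1}{\nu}(s-\lambda_1)^\top\Sigma^{-1}(s-\lambda_1)\bigr) \leq C_1 + \log(1+s^\top s)$ via an eigenvalue bound for the $t$ case) that the paper leaves implicit. The sign-convention issue you flag for the Laplace density is a real typo in the paper and your handling of it is the right one.
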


Compared to the first-order polynomial moment probabilistic predictor, the first-order absolute value moment probabilistic predictor does not require prior knowledge of the support, and the prediction performance is maximized when \[\int_{-\infty}^{\lambda_0^{(i)}} {p}_{\mathbf{y}_k \mid \mathbf{y}_{1:k-1}}(s\mid y_{1:k-1}) \dd s^{(i)} = 0.5,\] and $\lambda_1^{(i)} =\int p_{\mathbf{y}_k \mid \mathbf{y}_{1:k-1}}(s\mid y_{1:k-1})\, |s - \lambda_0| \dd s$.

The quadratic logarithm moment probabilistic predictor, which is more conservative than the first-order absolute value moment predictor, also does not require prior knowledge of the support. If $p_{\mathbf{y}_k \mid \mathbf{y}_{1:k-1}}(\cdot\mid y_{1:k-1})$ is an even function, the prediction performance is maximized when $\lambda_1 = z$. Moreover, its prediction performance is less sensitive to the parameters $\lambda_1$, which means an inaccurate estimation of the expectation will not decrease the performance too much as the first-order polynomial moment predictor does.

\subsection{Summary and Discussion}
A robust probabilistic predictor indicates a restrained and wise utilization of the information about the SDS, hence the prediction performance will never be unboundedly decreased. Focusing on the moment information, we have found a class of moment-based probabilistic predictors to be robust. The more accurate predictors estimate the true conditional moments, the better prediction performance is. Since polynomial moment probabilistic predictors may require extra support information and are sensitive to estimation accuracy, non-polynomial moment predictors are introduced to mediate these problems.

Certainly, the moment-based robust probabilistic predictor is not the only type. There are many different methods to exploit other statistical features of $p_{\mathbf{y}_k \mid \mathbf{y}_{1:k-1}}$ from $\mathcal{I}_m$. Just like our generalization from polynomial moment-based predictors to non-polynomial ones, different statistical features will yield different forms of robust probabilistic predictors. However, a new robust probabilistic predictor is necessary only when it possesses better performance or robustness, e.g., non-polynomial moment predictors are less sensitive to support and estimation accuracy.

\section{Algorithm Implementation}
In practice, a complete design of a moment-based robust probabilistic predictor can be divided into three steps: i) determine the order of the information set; ii) exploit the information set $\mathcal{I}_m$ and previous observations $\mathbf{y}_{1:k-1}$ to estimate the conditional moments $\mu_\alpha(\mathbf{y}_k \mid y_{1:k-1})$ at each time step for multi-index $\alpha$ with $|\alpha| \leq m$; iii) solve the optimal parameters of the moment-based robust probabilistic predictor following the estimations of $\mu_\alpha$.

In the last section, we already know how to handle step $3$ when the estimations of $\mu_\alpha$ are available, now we should handle the second step. It should be noted that the estimation in the second step may not be accurate. Since $p_{\mathbf{y}_k \mid \mathbf{y}_{1:k-1}}(s\mid y_{1:k-1})$ is unknown, $\mu_\alpha(\mathbf{y}_k \mid y_{1:k-1})$ cannot be directly derived from the definition. Instead, it should be approximated (e.g., Kalman filter) or learned (e.g., RNN), wherein estimation errors are unavoidable. That's why our design of robust probabilistic predictors should be robust with the unknown distributions but also robust/insensitive with the estimation accuracy.

In this section, we revisit the Kalman filter from the perspective of online predictors and leverage it as a realization of step $2$, to design moment-based robust probabilistic predictors for a linear stochastic dynamical system:
\begin{equation}\label{eq:linear-sds}
   \left\{\begin{aligned}
       & \mathbf{x}_{k+1} = F_k\mathbf{x}_k + G_k\mathbf{u}_k + \mathbf{w}_k \\
       & \mathbf{y}_k     = H_k\mathbf{x}_k + \mathbf{v}_k,
   \end{aligned}\right.
\end{equation}
where the information set includes the value of $F_k, G_k, H_k$.

\subsection{Revisit Kalman Filter As An Online Point Predictor}
Kalman filter is one of the most classical algorithms for online point prediction, which recursively fuses the innovations from observations into system models to update estimations. Let $\mathbf{\hat{x}}_{k}^{-}$, $\hat{P}_{k}^{-}$ be the prediction and covariance of $\mathbf{x}_{k}$ before $\mathbf{y}_{k}$ is observed, and $\mathbf{\hat{x}}_{k}^{+}$ ,$\hat{P}_{k}^{+}$ be the estimation and covariance of $\mathbf{x}_{k}$ after $\mathbf{y}_{k}$ is observed. At the time $k$, the prediction step of the Kalman filter is:
\begin{equation}\label{eq:kf_predict}
   \begin{aligned}
       & \mathbf{\hat{x}}_{k}^- = F_{k-1}\mathbf{\hat{x}}_{k-1}^+ + G_{k-1}u_{k-1}, \\
       & \hat{P}_{k}^- = F_{k-1}\hat{P}_{k-1}^+F_{k-1}^\top + \hat{Q}_{k-1},
   \end{aligned}
\end{equation}
where $\hat{Q}_{k-1}$ is the predictor's prior knowledge of the process noises.
The update step of the Kalman filter is:
\begin{equation}\label{eq:kf_update}
   \begin{aligned}
       & K_{k} = \hat{P}_{k}^-H_{k}^\top(H_{k}\hat{P}_{k}^-H_{k}^\top + \hat{R}_{k})^{-1},                      \\
       & \mathbf{\hat{x}}_{k}^+ = \mathbf{\hat{x}}_{k}^- + K_{k}(\mathbf{y}_{k} - H_{k}\mathbf{\hat{x}}_{k}^-), \\
       & \hat{P}_{k}^+ = (I-K_{k}H_{k})\hat{P}_{k}^-,
   \end{aligned}
\end{equation}
where $\hat{R}_k$ is the predictor's prior knowledge of the observation noises.
If the prior knowledge on $\hat{Q}_k$ and $\hat{R}_k$ is accurate, we denote the estimations as $\mathbf{x}^-_k, \mathbf{x}^+_k$ and covariances as $P_k^-, P_k^+$ without the hat superscript.

When the noises are assumed to be Gaussian, the conditional distributions can be explicitly evaluated.
\begin{lemma}\label{lem:kf_pdf}
   If the linear system noises are Gaussian and the information set is $\mathcal{I}_2$, there is
   \begin{equation}\label{eq:kf_pdf}
      p_{\mathbf{y}_k\mid \mathbf{y}_{1:k-1}}(\cdot \mid y_{1:k-1}) \sim \mathcal{N}(H_k{x}^-_k, H_k{P}^-_kH_k^\top + {R}_k).
   \end{equation}
\end{lemma}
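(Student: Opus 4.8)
The plan is to establish Lemma~\ref{lem:kf_pdf} by a standard induction on $k$, exploiting that the linear-Gaussian structure propagates jointly Gaussian conditional laws, and then matching moments. First I would argue that, conditioned on $\mathbf{y}_{1:k-1} = y_{1:k-1}$, the state $\mathbf{x}_k$ is Gaussian. This is the usual Kalman-filter induction hypothesis: at $k=1$, $\mathbf{x}_0$ is Gaussian by $\mathcal{I}_2$ together with the Gaussian-noise assumption (or the initialization convention), and if $\mathbf{x}_{k-1}\mid\mathbf{y}_{1:k-1}$ is Gaussian with mean $\mathbf{x}_{k-1}^+$ and covariance $P_{k-1}^+$, then the prediction step \eqref{eq:kf_predict} gives $\mathbf{x}_k\mid\mathbf{y}_{1:k-1}\sim\mathcal{N}(\mathbf{x}_k^-, P_k^-)$, because $\mathbf{x}_k = F_{k-1}\mathbf{x}_{k-1} + G_{k-1}\mathbf{u}_{k-1} + \mathbf{w}_{k-1}$ is an affine image of the Gaussian $\mathbf{x}_{k-1}$ plus the independent Gaussian $\mathbf{w}_{k-1}$ (the input $\mathbf{u}_{k-1}$ is known/deterministic in this realization, so it only shifts the mean). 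I would note that the accuracy of $\hat Q_{k-1},\hat R_k$ is assumed here, which is why the hat-free notation $\mathbf{x}_k^-, P_k^-$ is used.

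Next I would compute the conditional law of $\mathbf{y}_k$ given $\mathbf{y}_{1:k-1}$. Since $\mathbf{y}_k = H_k\mathbf{x}_k + \mathbf{v}_k$ with $\mathbf{v}_k$ independent Gaussian and independent of $\mathbf{y}_{1:k-1}$, and $\mathbf{x}_k\mid\mathbf{y}_{1:k-1}\sim\mathcal{N}(\mathbf{x}_k^-,P_k^-)$, the vector $\mathbf{y}_k\mid\mathbf{y}_{1:k-1}$ is again an affine-plus-independent-Gaussian transform, hence Gaussian. Its mean is $H_k\mathbf{x}_k^-$ and its covariance is $H_k P_k^- H_k^\top + R_k$ by the standard variance-of-affine-transform computation together with independence of $\mathbf{v}_k$. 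This yields exactly \eqref{eq:kf_pdf}. I would also remark that this is consistent with the innovation representation: $\mathbf{y}_k - H_k\mathbf{x}_k^-$ is the innovation, zero-mean with covariance $H_kP_k^-H_k^\top + R_k$, which is the denominator appearing in the gain $K_k$ in \eqref{eq:kf_update}.

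The main obstacle is not any single calculation but rather making the conditioning arguments rigorous: one must justify that all the relevant joint distributions are jointly Gaussian so that conditional distributions are again Gaussian with the claimed (affine) conditional means and (constant) conditional covariances, and that the independence structure among $\mathbf{x}_0,\mathbf{w}_{1:n},\mathbf{v}_{1:n}$ (and the exogeneity of $\mathbf{u}_k$) is preserved under the recursion. The cleanest route is to observe inductively that $(\mathbf{x}_k,\mathbf{y}_{1:k})$ is jointly Gaussian, invoke the Gaussian conditioning formula once, and read off the parameters; the update step \eqref{eq:kf_update} is then recovered as the conditioning formula and feeds the next iteration. I would keep the writeup short by citing the earlier Kalman recursions \eqref{eq:kf_predict}--\eqref{eq:kf_update} for the mean/covariance bookkeeping and only spelling out the Gaussianity-propagation and the final $H_k(\cdot)+\mathbf{v}_k$ step in detail.
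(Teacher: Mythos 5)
Your argument is correct, but it takes a different route from the paper's. You run the classical Bayesian-filter induction: establish that $(\mathbf{x}_k,\mathbf{y}_{1:k})$ is jointly Gaussian, show $\mathbf{x}_k\mid\mathbf{y}_{1:k-1}\sim\mathcal{N}(x_k^-,P_k^-)$ via the prediction step, and then push forward through $\mathbf{y}_k=H_k\mathbf{x}_k+\mathbf{v}_k$ with $\mathbf{v}_k$ independent. The paper instead works with the innovation process: it starts from the (unconditional) fact that $\mathbf{y}_k-H_k\hat{\mathbf{x}}_k^-\sim\mathcal{N}(0,H_kP_k^-H_k^\top+R_k)$, asserts that this law is unchanged after conditioning on $y_{1:k-1}$, and then shifts by $H_kx_k^-$ since $\hat{\mathbf{x}}_k^-$ is a deterministic function of the past observations. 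The paper's route is shorter but leans implicitly on the orthogonality/independence of the innovation from $\mathbf{y}_{1:k-1}$ to justify that the conditional covariance equals the unconditional one --- a step it does not spell out, whereas your conditional-Gaussian recursion makes exactly this point rigorous (the conditional covariance is constant in $y_{1:k-1}$ because of joint Gaussianity). Your approach also buys an explicit treatment of the base case: you correctly note that one needs $\mathbf{x}_0$ Gaussian (or an equivalent initialization convention), a hypothesis the lemma's statement leaves tacit. The cost is a slightly longer writeup, since you must also recover the update step \eqref{eq:kf_update} as the Gaussian conditioning formula to close the induction; the paper avoids this by treating the Kalman recursions as given.
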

\begin{proof}
   Please see Appendix \ref{app:lem:kf_pdf}.
\end{proof}

\subsection{KF Moment-based Robust Probabilistic Predictor}
When the noises are not only known but also subjected to Gaussian distributions, it is shown that the Kalman filter can be viewed as an online predictor. When the noises are unknown, it still provides estimations for the first two moments of the conditional distributions, which can be leveraged in step $2$ of designing a moment robust probabilistic predictor for the linear system \eqref{eq:linear-sds}. A complete procedure is provided in Algorithm \ref{alg:general-kf}.

\begin{algorithm}[h]
   \renewcommand{\algorithmicrequire}{\textbf{Input:}}
   \renewcommand{\algorithmicensure}{\textbf{Output:}}
   \caption{KF Moment-based Robust Probabilistic Predictor}
   \label{alg:general-kf}
   \begin{algorithmic}[1]
      \REQUIRE $\hat{x}^+_0, \hat{P}^+_0, n, \hat{Q}_k, \hat{R}_k, \hat{u}_{0:n-1}, \operatorname{INF}$
      \STATE Initialization: $k \leftarrow 0, m \leftarrow 3$
      \REPEAT
      \STATE $m \leftarrow m-1$
      \REPEAT
      \STATE $k \leftarrow k + 1$
      \STATE Update $ \hat{x}^-_k, \hat{P}^-_k $ based on Equation \eqref{eq:kf_predict}
      \STATE Design a $m$-th moment robust distribution $\hat{p}_{\mathbf{y}_k\mid \mathbf{y}_{1:k-1}}$
      \STATE Observe $y_k$
      \STATE Update $\mathcal{L}(\mathscr{F}, y_{1:k})$ by \[\mathcal{L}(\mathscr{F}, y_{1:k-1}) + \log \hat{p}_{\mathbf{y}_k \mid \mathbf{y}_{1:k-1}}(y_k\mid y_{1:k-1})\]
      \STATE Update $ \hat{x}^+_k, \hat{P}^+_k $ based on Equation \eqref{eq:kf_update}
      \UNTIL $k=n$
      \UNTIL $\mathcal{L}(\mathscr{F}, y_{1:n}) < \operatorname{INF}$
      \ENSURE $\{p_{\mathbf{y}_k \mid \mathbf{y}_{1:k-1}}(\cdot \mid y_{1:k-1})\}_{k=1}^n$, $\{\mathcal{L}(\mathscr{F}, y_{1:k})\}_{k=1}^n$
   \end{algorithmic}
\end{algorithm}

When the distributions are Gaussian, the log-likelihood of the KF second-order moment predictor can be explicitly calculated in the next theorem.
\begin{theorem}\label{thm:evl-gauss}
   When the noises and initial state are subjected to unknown Gaussian distributions, the expected log-likelihood functional of the KF-based predictor is
   \begin{equation*}\label{eq:evl-gauss}
      \begin{aligned}
         \mathcal{L}(\mathscr{F}, \mathbf{y}_{1:n})= -\frac{1}{2}\sum_{k=1}^{n} & \left\{e_k^\top\hat{\Sigma}_k^{-1}e_k+\tr[\hat{\Sigma}_k^{-1}\Sigma_k]+\right. \\
                                                                                & \ln|\hat{\Sigma}_kR_kQ_{k-1}P_{k-1}^+|\!-\!\ln|\Sigma_k|                       \\
                                                                                & +\left.(2\ln(2\pi)+2)d_x + \ln(2\pi)d_y\right\},
      \end{aligned}
   \end{equation*}
   where
   \begin{equation*}
      \left\{\begin{aligned}
          & \Sigma_k = H_k{P}_k^{-}H_k^\top + {R}_k                               \\
          & \hat{\Sigma}_k = H_k\hat{P}_k^{-}H_k^\top + \hat{R}_k                 \\
          & e_k = H_kF_0(x_0 - \hat{x}_0^+)\prod_{i=1}^{k-1}F_{i}(I-K_{i}H_{i}) .
      \end{aligned}\right.
   \end{equation*}
\end{theorem}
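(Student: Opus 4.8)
The plan is to unwind $\mathcal{L}$ through Theorem~\ref{thm:evl-1}, substitute the Gaussian conditional laws, and then carry out the entropy and KL bookkeeping in closed form. Theorem~\ref{thm:evl-1} gives
\[ \mathcal{L}(\mathscr{F},\mathbf{y}_{1:n}) = -\mathrm{H}(\mathbf{y}_{1:n}) - \sum_{k=1}^n \E_{y_{1:k-1}} D_{KL}(q_k\Vert\hat q_k), \]
with $q_k = p_{\mathbf{y}_k\mid\mathbf{y}_{1:k-1}}$ and $\hat q_k = \hat p_{\mathbf{y}_k\mid\mathbf{y}_{1:k-1}}$. By Lemma~\ref{lem:kf_pdf}, $q_k = \mathcal{N}(H_k x_k^-,\Sigma_k)$; and since the KF second-moment predictor feeds the conditional-moment estimates $H_k\hat x_k^-$ and $\hat\Sigma_k$ into the optimal Gaussian form of Theorem~\ref{thm:maxmin-2}, we have $\hat q_k = \mathcal{N}(H_k\hat x_k^-,\hat\Sigma_k)$. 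Inserting the closed-form KL between two $d_y$-dimensional Gaussians turns each summand into $\tfrac12\big[\ln(|\hat\Sigma_k|/|\Sigma_k|) - d_y + \tr(\hat\Sigma_k^{-1}\Sigma_k) + \|H_k(\hat x_k^- - x_k^-)\|_{\hat\Sigma_k^{-1}}^2\big]$, so only the mean-mismatch expectation and $\mathrm{H}(\mathbf{y}_{1:n})$ remain to be evaluated.

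For the mean-mismatch term I would set $\epsilon_k^{\pm} := \hat x_k^{\pm} - x_k^{\pm}$ and subtract the true Kalman prediction and update steps \eqref{eq:kf_predict}, \eqref{eq:kf_update} from the predictor's. The observation $y_{k-1}$ cancels in $\epsilon_{k-1}^+$, leaving $\epsilon_k^- = F_{k-1}(I-K_{k-1}H_{k-1})\epsilon_{k-1}^-$; unrolling from the initial mismatch $\epsilon_0^+ = \hat x_0^+ - \E\mathbf{x}_0$ gives $H_k\epsilon_k^- = e_k$ exactly, so $\E_{y_{1:k-1}}\|H_k(\hat x_k^- - x_k^-)\|_{\hat\Sigma_k^{-1}}^2 = e_k^\top\hat\Sigma_k^{-1}e_k$. (When $\hat Q_k,\hat R_k$ deviate from the true noise covariances, the innovation term additionally leaves a zero-mean residual that is independent of $y_{1:k-1}$; its second moment supplies the remaining covariance-type contributions, while the cross term vanishes by independence.)

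The last ingredient is $\mathrm{H}(\mathbf{y}_{1:n})$. By the chain rule $\mathrm{H}(\mathbf{y}_{1:n}) = \sum_k \mathrm{H}(\mathbf{y}_k\mid\mathbf{y}_{1:k-1})$, and, conditionally on $\mathbf{y}_{1:k-1}$, $\mathbf{y}_k$ is Gaussian with covariance $\Sigma_k$, so $\mathrm{H}(\mathbf{y}_k\mid\mathbf{y}_{1:k-1}) = \tfrac12\ln\big((2\pi e)^{d_y}|\Sigma_k|\big)$. To recast this into the system-matrix form appearing in the statement, I would pass from the independent primitives $(\mathbf{x}_0,\mathbf{w}_{0:n-1},\mathbf{v}_{1:n})$ to $(\mathbf{x}_{0:n},\mathbf{y}_{1:n})$ — a linear change of variables with unit Jacobian — so that $\mathrm{H}(\mathbf{x}_{0:n},\mathbf{y}_{1:n})$ splits into the entropies of $\mathbf{x}_0$, the $\mathbf{w}_k$'s and the $\mathbf{v}_k$'s, and then subtract the Kalman-smoother conditional entropy $\mathrm{H}(\mathbf{x}_{0:n}\mid\mathbf{y}_{1:n})$, expanded stepwise via the RTS backward recursion (whose per-step covariance has determinant $|P_k^+||Q_k|/|P_{k+1}^-|$). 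This is exactly the route that surfaces the $d_x$-, $|Q_{k-1}|$- and $|P_{k-1}^+|$-terms. Finally I would assemble the $-\mathrm{H}$ and $-\sum\E D_{KL}$ pieces, collect the $\ln|\cdot|$, trace, quadratic and $d_x,d_y$-constant terms, and simplify using the Riccati identities $P_k^- = F_{k-1}P_{k-1}^+F_{k-1}^\top + Q_{k-1}$ and $(P_k^+)^{-1} = (P_k^-)^{-1} + H_k^\top R_k^{-1}H_k$ (equivalently $|\Sigma_k| = |R_k||P_k^-|/|P_k^+|$) to reach the displayed expression.

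The main obstacle I expect is the entropy accounting. $\mathrm{H}(\mathbf{y}_{1:n})$ is most cleanly $\tfrac12\sum_k\ln((2\pi e)^{d_y}|\Sigma_k|)$, so rewriting it as the stated mixture of $|\hat\Sigma_k|$, $|R_k|$, $|Q_{k-1}|$, $|P_{k-1}^+|$, $|\Sigma_k|$ and $d_x$-constants hinges on picking the right intermediate decomposition (the change of variables plus the smoother expansion above) and telescoping the determinant products without sign or index errors. A secondary difficulty, present only when the predictor's noise statistics are inaccurate, is that $\epsilon_k^-$ is then a random functional of the past innovations, so the quadratic term must be expanded as a sum of independent contributions and its expectation taken term by term.
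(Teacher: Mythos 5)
Your treatment of the KL term coincides with the paper's own proof: both identify $q_k=\mathcal{N}(H_kx_k^-,\Sigma_k)$ and $\hat q_k=\mathcal{N}(H_k\hat x_k^-,\hat\Sigma_k)$ via Lemma~\ref{lem:kf_pdf}, insert the closed-form Gaussian KL divergence, and obtain $e_k$ from the same unrolled mean-error recursion $\E(\mathbf{x}_{k+1}-\hat{\mathbf{x}}_{k+1}^-)=F_k(I-K_kH_k)\E(\mathbf{x}_k-\hat{\mathbf{x}}_k^-)$. The divergence---and the genuine gap---is in the entropy accounting, exactly where you anticipated trouble. The paper produces the $\ln|R_kQ_{k-1}P_{k-1}^+|$ and $(2\ln(2\pi)+2)d_x$ terms by asserting $\mathrm{H}(\mathbf{y}_k\mid\mathbf{y}_{1:k-1})=\mathrm{H}(\mathbf{y}_k\mid\mathbf{x}_k)+\mathrm{H}(\mathbf{x}_k\mid\mathbf{x}_{k-1})+\mathrm{H}(\mathbf{x}_{k-1}\mid\mathbf{y}_{1:k-1})$ and evaluating each summand as a Gaussian entropy. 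Your starting point $\mathrm{H}(\mathbf{y}_k\mid\mathbf{y}_{1:k-1})=\tfrac{1}{2}\ln\bigl((2\pi e)^{d_y}|\Sigma_k|\bigr)$ is the correct one, but it is \emph{not} equal to that three-term sum: the latter carries an extra $d_x\ln(2\pi e)+\tfrac{1}{2}\ln|Q_{k-1}P_{k-1}^+|$ that a $d_y$-dimensional marginal entropy cannot contain. The three-term sum is a joint-entropy quantity of the type $\mathrm{H}(\mathbf{y}_k,\mathbf{x}_k,\mathbf{x}_{k-1}\mid\mathbf{y}_{1:k-1})$; recovering $\mathrm{H}(\mathbf{y}_k\mid\mathbf{y}_{1:k-1})$ from it requires subtracting $\mathrm{H}(\mathbf{x}_{k-1},\mathbf{x}_k\mid\mathbf{y}_{1:k})$, which the paper's chain-rule step omits.

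Consequently your plan to ``recast'' $\tfrac{1}{2}\sum_k\ln\bigl((2\pi e)^{d_y}|\Sigma_k|\bigr)$ into the displayed mixture cannot be completed as a chain of equalities. If you carry out your change-of-variables-plus-smoother computation honestly, the $|Q_k|$ determinants cancel between $\mathrm{H}(\mathbf{x}_{0:n},\mathbf{y}_{1:n})$ and $\mathrm{H}(\mathbf{x}_{0:n}\mid\mathbf{y}_{1:n})$ and the remaining determinants telescope, via $|\Sigma_k|=|R_k|\,|P_k^-|/|P_k^+|$, back to $\sum_k\tfrac{1}{2}\ln|\Sigma_k|$ plus the $d_y$-constant---not to $\sum_k\tfrac{1}{2}\ln|R_kQ_{k-1}P_{k-1}^+|$ plus $d_x$-constants. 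So you must either stop at the correct and cleaner form $\mathcal{L}(\mathscr{F},\mathbf{y}_{1:n})=-\tfrac{1}{2}\sum_k\ln\bigl((2\pi e)^{d_y}|\Sigma_k|\bigr)-\sum_k\E_{y_{1:k-1}}D_{KL}(q_k\Vert\hat q_k)$, which does not agree with the theorem's display, or adopt the paper's per-step joint decomposition and flag the missing subtraction. The obstacle you identified is not a bookkeeping subtlety but an actual inequality between the two candidate expressions; your proposal as written does not close it, and no amount of Riccati manipulation will.
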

\begin{proof}
   Please see Appendix \ref{app:thm:evl-gauss}.
\end{proof}
\begin{remark}
   Unlike the covariance matrix $P_{\infty}^{-}$, expression \eqref{eq:evl-gauss} provides a more concrete characterization of the prediction performance of KF. Moreover, it is clearer how different factors are quantitatively combined to influence the prediction performance.
\end{remark}

\begin{corollary}
   When the noises and initial state are subjected to known Gaussian distributions, the expected log-likelihood functional is given by
   \begin{equation}
      \begin{aligned}
         \mathcal{L}(\mathscr{F}, \mathbf{y}_{1:n}) \!=\!-\frac{1}{2}\sum_{k=1}^{n} & \{  \ln(|R_kQ_kP_{k-1}^+|) + 2(\ln(2\pi)+1)d_x \\
                                                                                    & + (\ln(2\pi)+1)d_y \}.
      \end{aligned}
   \end{equation}
\end{corollary}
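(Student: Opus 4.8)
The plan is to read off the corollary as the ``perfect-knowledge'' specialization of Theorem~\ref{thm:evl-gauss}. Knowing the Gaussian laws of $\mathbf{w}_{1:n},\mathbf{v}_{1:n}$ and of the initial state means the predictor runs the Kalman filter with the exact priors, i.e., $\hat{Q}_k=Q_k$, $\hat{R}_k=R_k$, $\hat{x}_0^+=\E\mathbf{x}_0$ and $\hat{P}_0^+=\cov(\mathbf{x}_0)=P_0^+$. First I would push these equalities through the recursions \eqref{eq:kf_predict}--\eqref{eq:kf_update}: a one-line induction on $k$ shows that the hatted covariance recursion coincides with the un-hatted one, so $\hat{P}_k^-=P_k^-$ and hence $\hat{\Sigma}_k=H_k\hat{P}_k^-H_k^\top+\hat{R}_k=H_kP_k^-H_k^\top+R_k=\Sigma_k$ for every $k$; and since the propagated initial-state error $e_k$ in Theorem~\ref{thm:evl-gauss} is built from the factor $\E\mathbf{x}_0-\hat{x}_0^+$, it vanishes once $\hat{x}_0^+=\E\mathbf{x}_0$, giving $e_k=0$.

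Next I would substitute $\hat{\Sigma}_k=\Sigma_k$ and $e_k=0$ into the $k$-th summand of the formula in Theorem~\ref{thm:evl-gauss}. The quadratic term drops out, $e_k^\top\hat{\Sigma}_k^{-1}e_k=0$; the trace term collapses, $\tr[\hat{\Sigma}_k^{-1}\Sigma_k]=\tr[\Sigma_k^{-1}\Sigma_k]=\tr[I_{d_y}]=d_y$; and the log-determinant part telescopes, $\ln|\hat{\Sigma}_kR_kQ_{k-1}P_{k-1}^+|-\ln|\Sigma_k|=\ln|\hat{\Sigma}_k|-\ln|\Sigma_k|+\ln|R_kQ_{k-1}P_{k-1}^+|=\ln|R_kQ_{k-1}P_{k-1}^+|$. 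What survives inside the braces is $\ln|R_kQ_{k-1}P_{k-1}^+|+d_y+(2\ln(2\pi)+2)d_x+\ln(2\pi)d_y$, which I would regroup as $\ln|R_kQ_{k-1}P_{k-1}^+|+2(\ln(2\pi)+1)d_x+(\ln(2\pi)+1)d_y$; summing over $k$ and multiplying by $-\tfrac12$ reproduces the claimed expression (the shift of the index of $Q$ inside the sum is purely cosmetic and immaterial to the argument).

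The step I expect to need the most care is bookkeeping rather than analysis: making explicit that ``known noises and initial state'' forces simultaneously $\hat{Q}_k=Q_k$, $\hat{R}_k=R_k$ \emph{and} $\hat{x}_0^+=\E\mathbf{x}_0$, so that both $\hat{\Sigma}_k=\Sigma_k$ and $e_k=0$ hold with no residual mean- or covariance-mismatch surviving the filter; the induction $\hat{P}_k^-=P_k^-$ is exactly the remark already made after \eqref{eq:kf_update}, so nothing new is required there. One could alternatively argue from Theorem~\ref{thm:evl-1} together with Lemma~\ref{lem:kf_pdf} --- under known Gaussian noises $\hat{p}_{\mathbf{y}_k\mid\mathbf{y}_{1:k-1}}=p_{\mathbf{y}_k\mid\mathbf{y}_{1:k-1}}$, so every KL term in Theorem~\ref{thm:evl-1} vanishes and $\mathcal{L}(\mathscr{F},\mathbf{y}_{1:n})=-\mathrm{H}(\mathbf{y}_{1:n})$ --- but converting $-\mathrm{H}(\mathbf{y}_{1:n})$ into the stated $(R_k,Q_{k-1},P_{k-1}^+,d_x,d_y)$-form is precisely the computation already packaged in the proof of Theorem~\ref{thm:evl-gauss}, so specialising that theorem is the shorter route.
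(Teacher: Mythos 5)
Your proposal is correct and is exactly the intended argument: the paper states the corollary without proof as the immediate specialization of Theorem~\ref{thm:evl-gauss} under $\hat{Q}_k=Q_k$, $\hat{R}_k=R_k$, $\hat{x}_0^+=\E\mathbf{x}_0$, which forces $\hat{\Sigma}_k=\Sigma_k$ and $e_k=0$ and collapses the summand to $\ln|R_kQ_{k-1}P_{k-1}^+|+2(\ln(2\pi)+1)d_x+(\ln(2\pi)+1)d_y$. Your observation that the corollary's $Q_k$ should read $Q_{k-1}$ is also right; it is an index typo in the statement, not a gap in your argument.
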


However, if the normality assumption is utilized when the noises are non-Gaussian, this prediction performance may degenerate a lot. Consider a toy example as follows, which points out the danger of inappropriate Gaussian assumptions in a noise-unknown setting.

\begin{example}
   Suppose $\mathbf{y}_1$ is subjected to a two-point discrete distribution that $\pr(\mathbf{y}_1 = -1) = \pr(\mathbf{y}_1 = 1) = 0.5$, it follows imediately that $\E \mathbf{y}_1 = 0$ and $\cov \mathbf{y}_1 = 1$. A Kalman filter, even though knowing the first two moments of $\mathbf{y}_1$, uses the Gaussian distribution to make predictions, i.e., $\hat{p}_{\mathbf{y}_1} \sim \mathcal{N}(0, 1)$. Then, the log-likelihood functional is $\mathcal{L}(\hat{p}_{\mathbf{y}_1}, y_1)= -\frac{1}{2}\log(2\pi)-\frac{1}{2}$. If $\mathbf{y}_1$ is also subjected to $\mathcal{N}(0, 1)$, there is $\mathcal{L}(\hat{p}_{\mathbf{y}_1}, y_1) \geq \frac{1}{2}\log(2\pi)-\frac{1}{2}$ for any $y_1\in[-1,1]$, which accounts for nearly 70 percent of all the possible $y_1$.
\end{example}

This toy example reveals that covariance is insufficient to characterize the prediction performance when the noises are unknown, while the log-likelihood functional can distinguish the effect caused by inappropriate Gaussian assumptions in Kalman filters.

\subsection{Discussion and Generalization}
\begin{figure*}[t]
   \centering
   \subfigure[Second Moment Predictor]{
      \includegraphics[width = 0.23\textwidth]{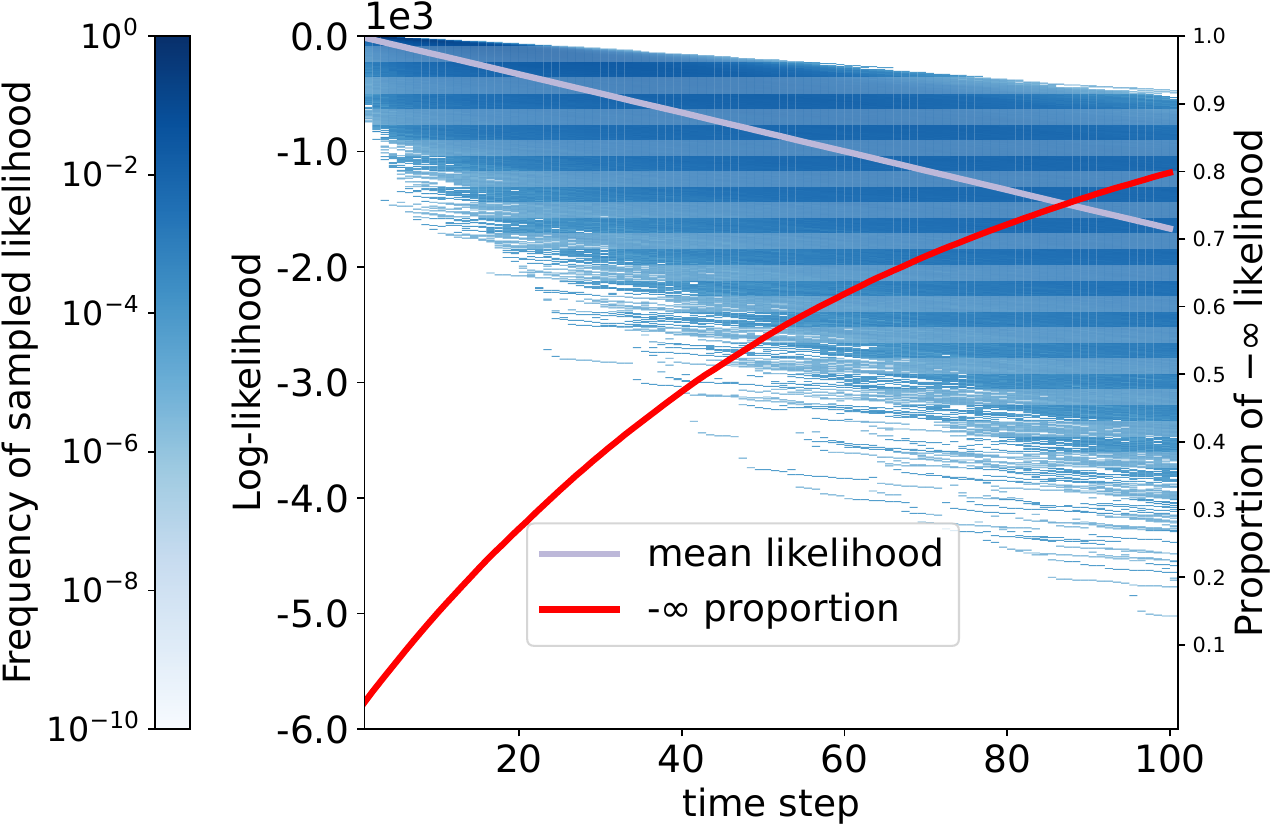}
      \label{sim:dy1:guassian}
   }
   \subfigure[First Moment Predictor]{
      \includegraphics[width = 0.23\textwidth]{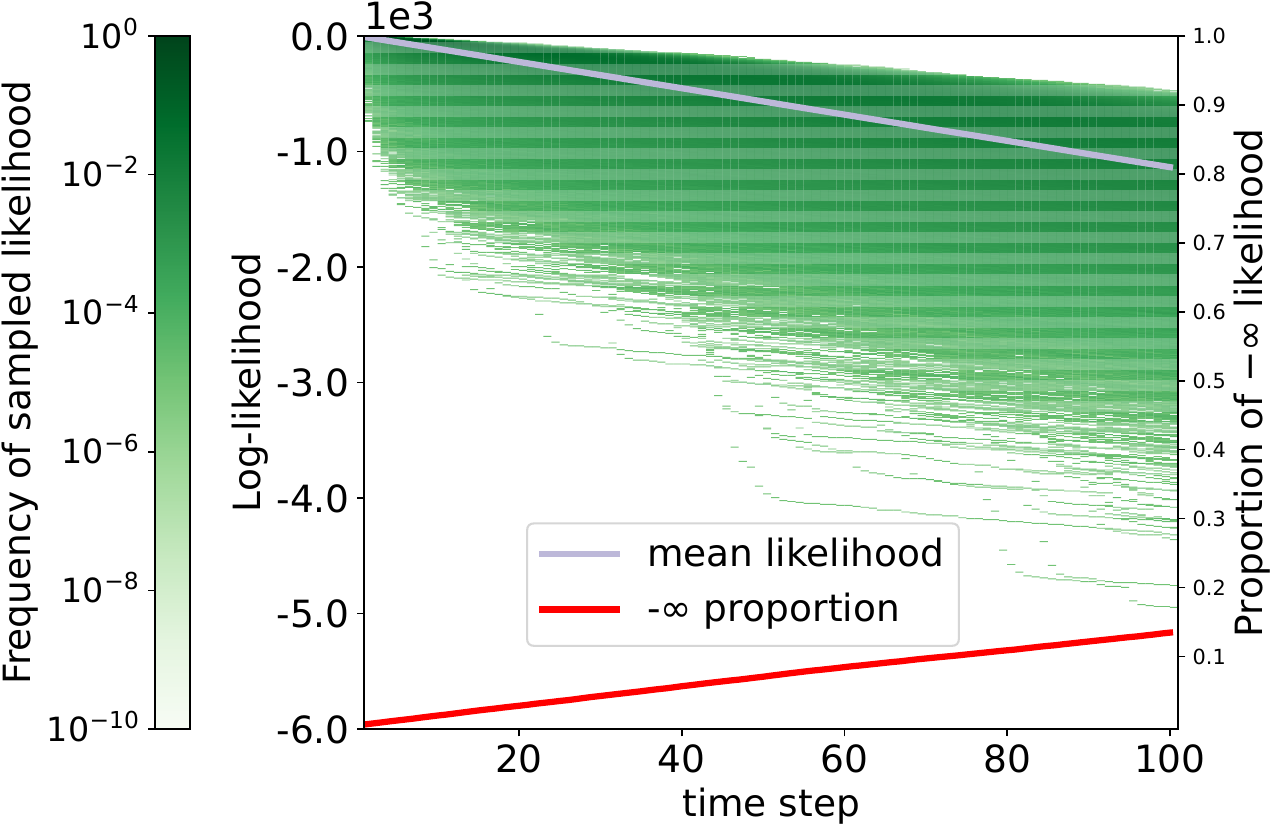}
      \label{sim:dy1:laplace}
   }
   \subfigure[$t(2)$ Predictor]{
      \includegraphics[width = 0.23\textwidth]{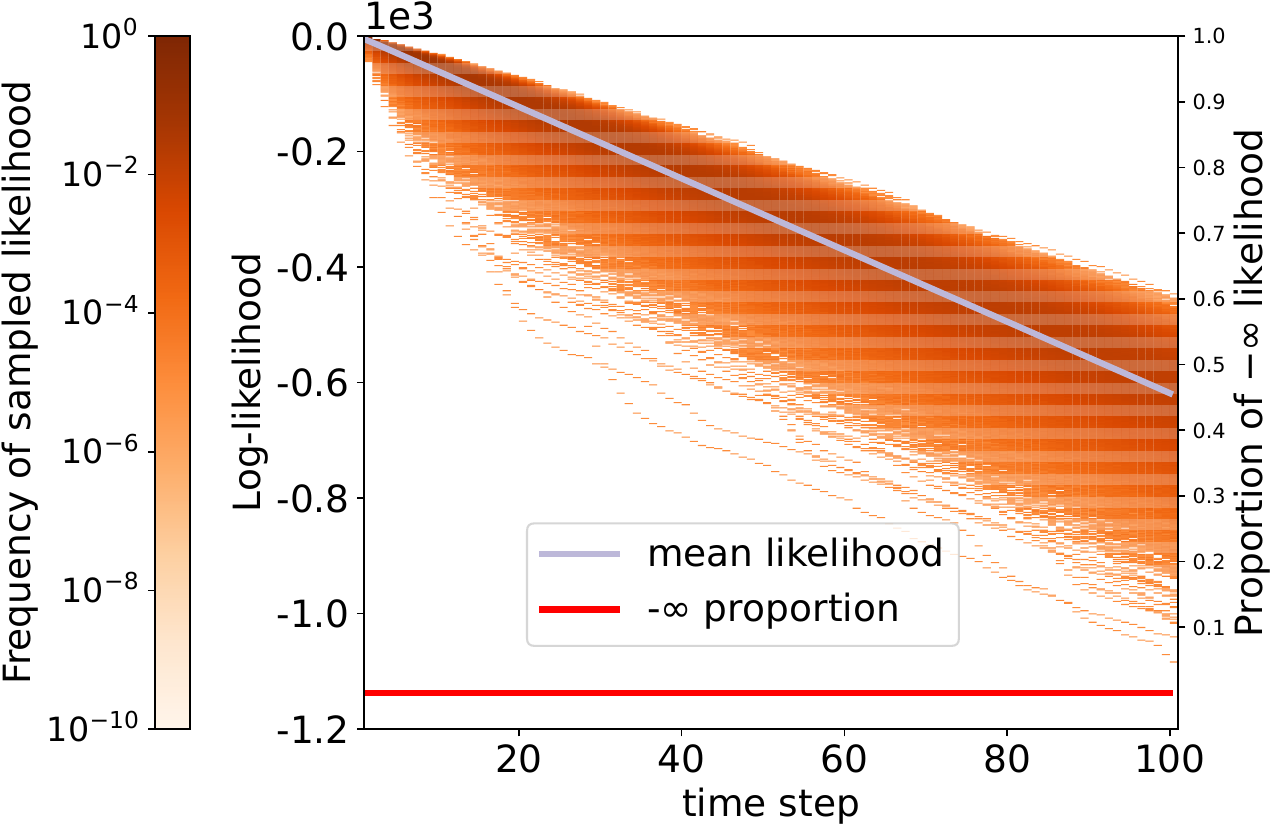}
      \label{sim:dy1:t2}
   }
   \subfigure[$t(1)$ Predictor]{
      \includegraphics[width = 0.23\textwidth]{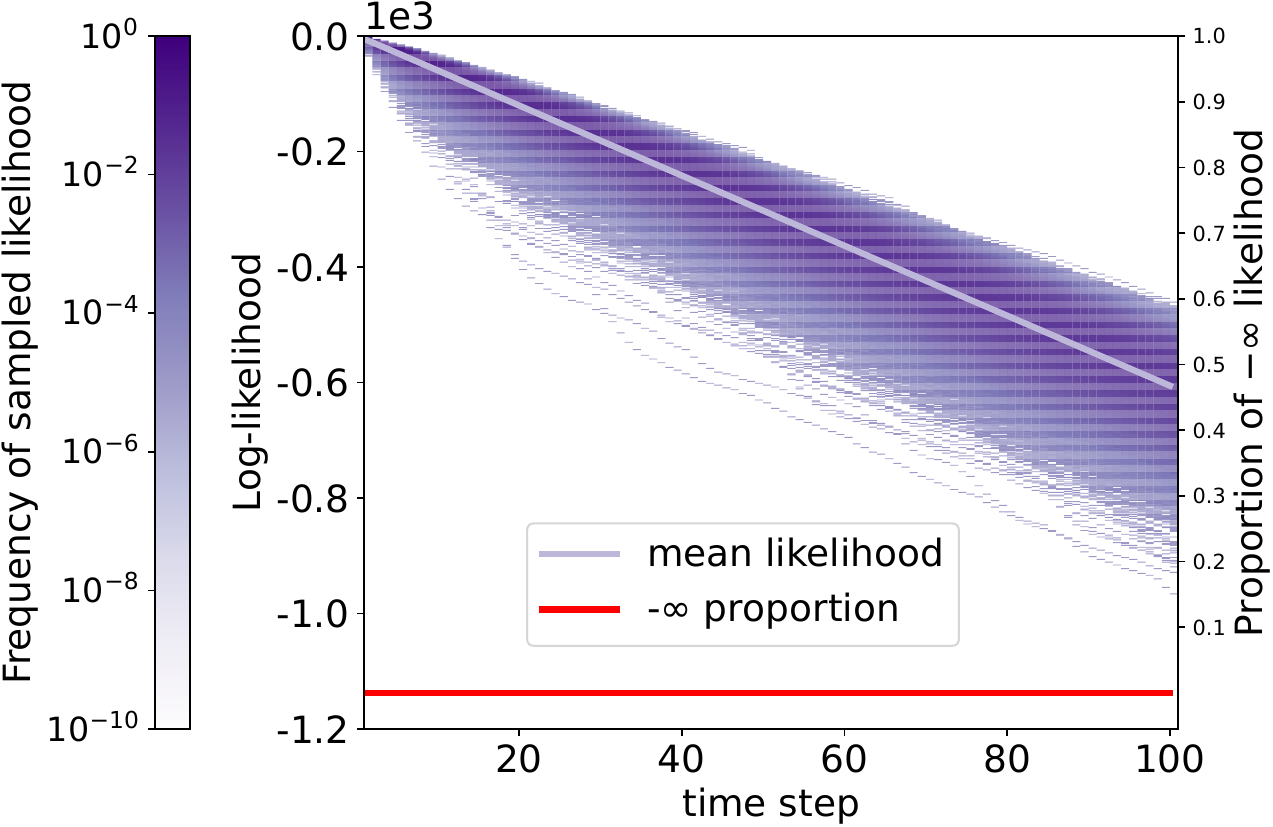}
      \label{sim:dy1:t1}
   }
   \label{sim:dy1}
   \caption{Performance of different predictors on trajectories of SDS $\Phi_1$ with $t(1)$ process noises.}
\end{figure*}

\begin{figure*}[t]
   \centering
   \subfigure[Second Moment Predictor]{
      \includegraphics[width = 0.23\textwidth]{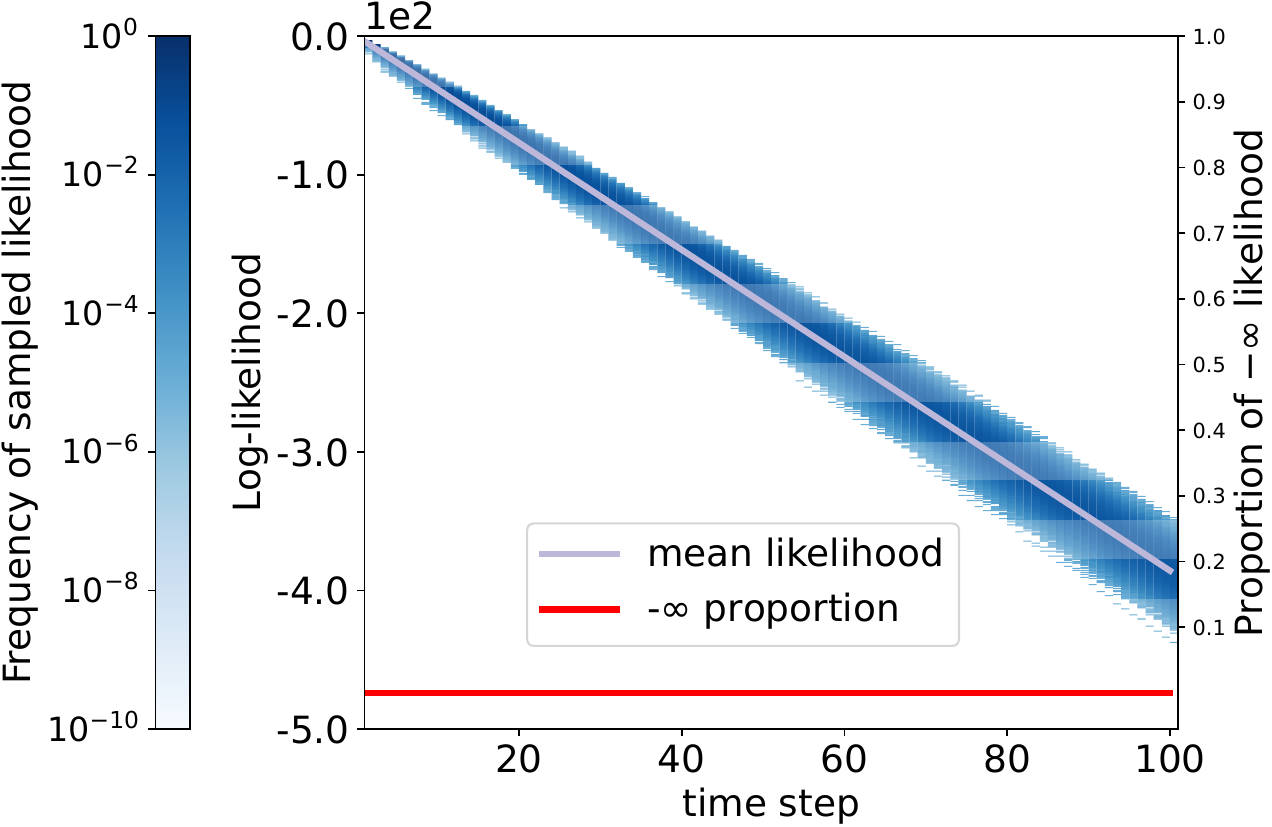}
      \label{sim:dy2:guassian}
   }
   \subfigure[First Moment Predictor]{
      \includegraphics[width = 0.23\textwidth]{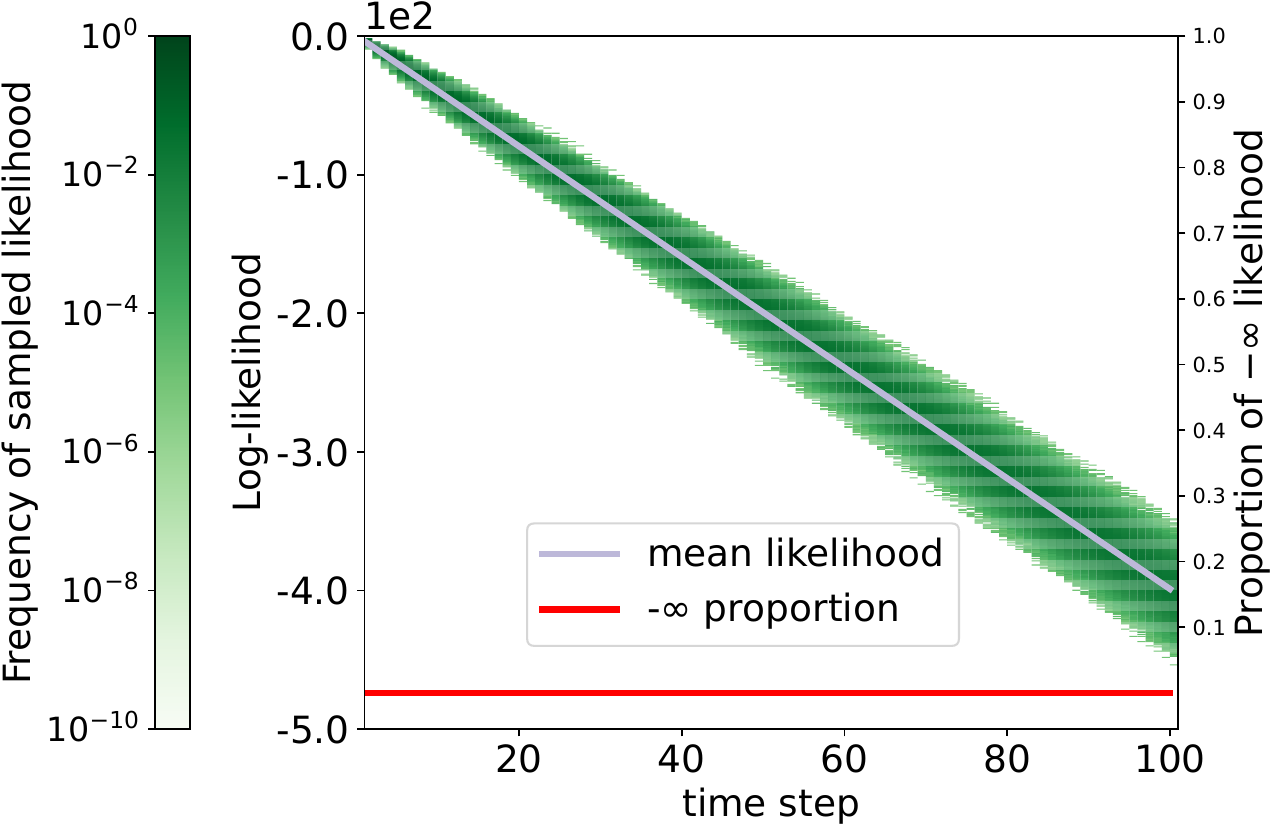}
      \label{sim:dy2:laplace}
   }
   \subfigure[$t(2)$ Predictor]{
      \includegraphics[width = 0.23\textwidth]{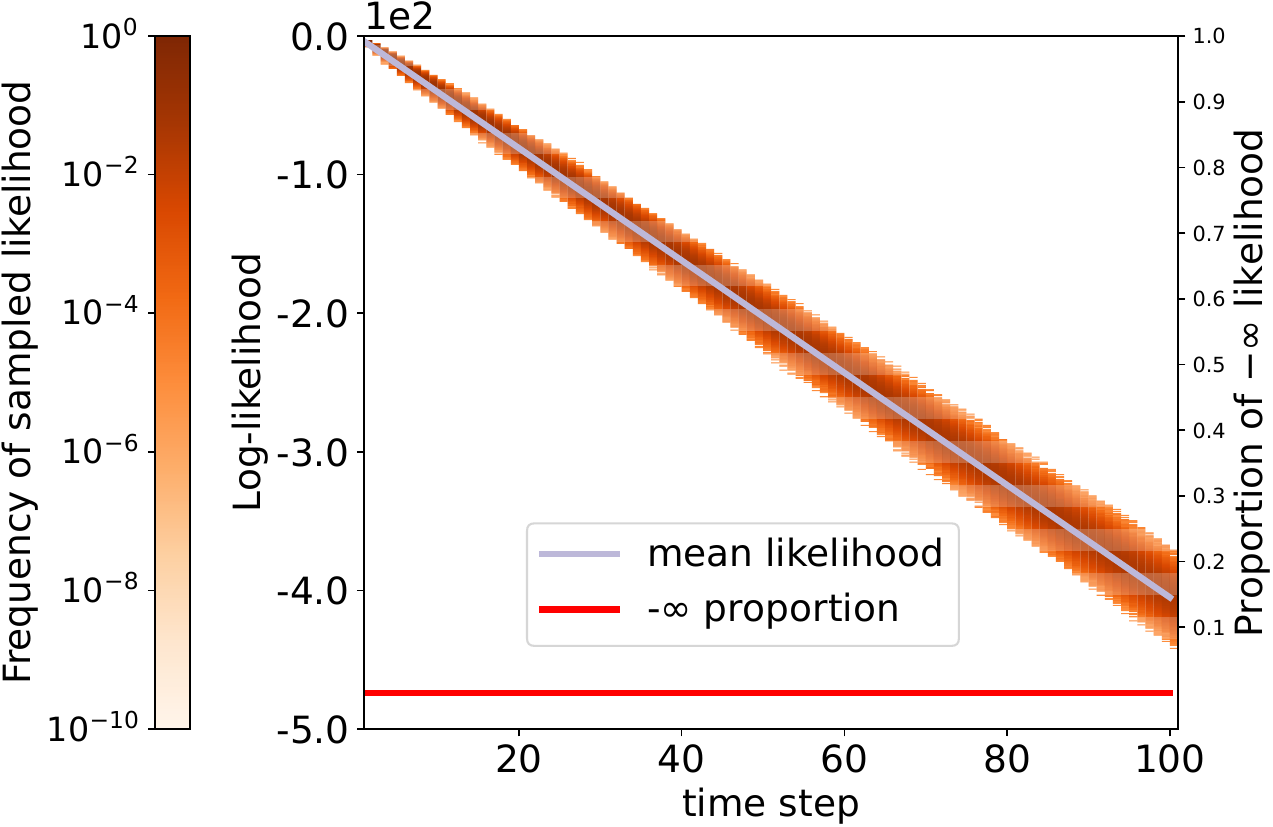}
      \label{sim:dy2:t2}
   }
   \subfigure[$t(1)$ Predictor]{
      \includegraphics[width = 0.23\textwidth]{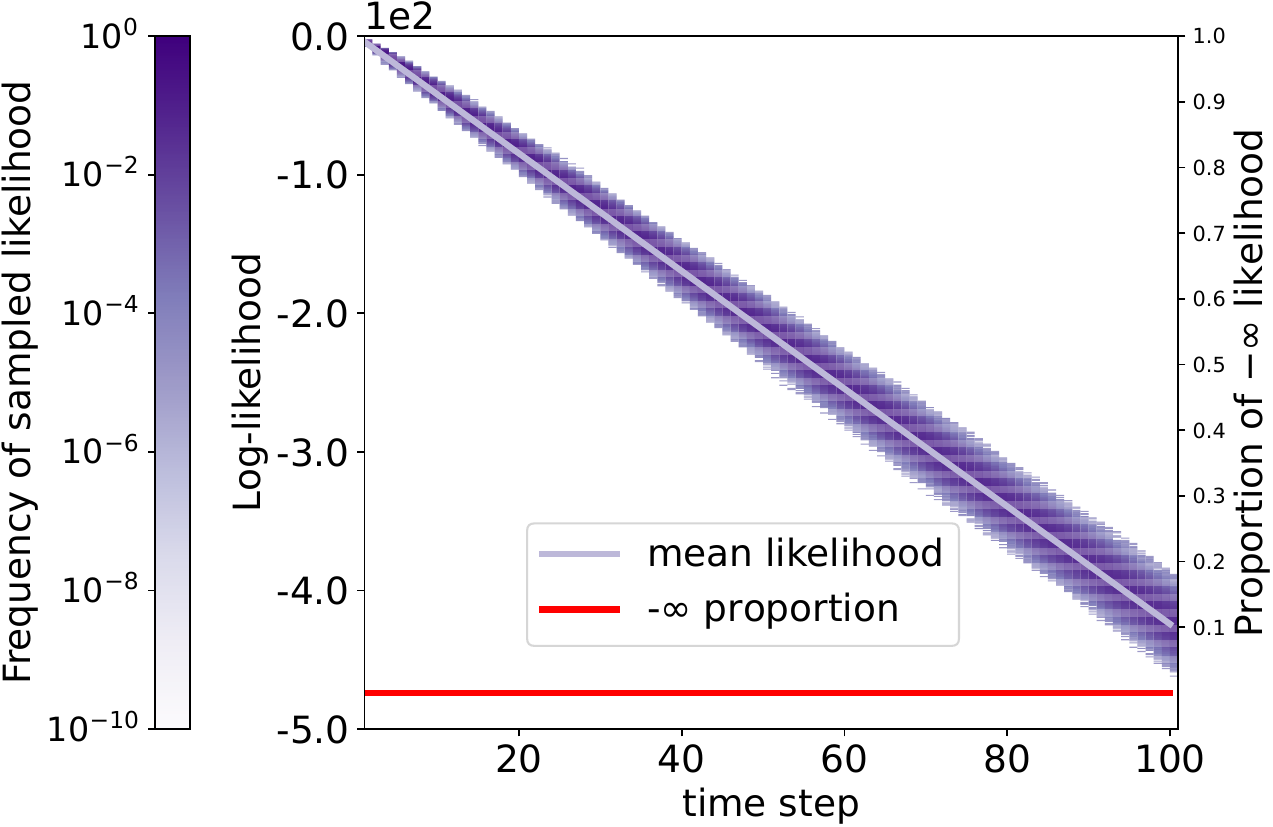}
      \label{sim:dy2:t1}
   }
   \caption{Performance of different predictors on trajectories of SDS $\Phi_2$ with Gaussian process noises.}
   \label{sim:dy2}
\end{figure*}
The classical Kalman filter uses a linear combination of $\mathbf{y}_{1:k-1}$ to approximate $\mu_1(\mathbf{y}_{1:k-1})$. To make sure that this approximation is good enough, the Kalman filter utilizes the property that conditional expectation has the minimum variance among all the functions, i.e., \[\E(\mathbf{y}_k\mid \mathbf{y}_{1:k-1}) = \arg\min_{l(\mathbf{y}_{1:k-1})} \E\|\mathbf{y}_k - l(\mathbf{y}_{1:k-1})\|_2^2.\] Then, when $l$ is limited to the linear function space, the optimal linear estimator can be explicitly attained based on the knowledge of covariances. This method can be easily generalized to higher-order moments' estimation.
\begin{theorem}
   Given an $m$-th order information set $\mathcal{I}_m$, if any $r$-th moment of the conditional distribution $p_{\mathbf{y}_k\mid \mathbf{y}_{1:k-1}}$ can be determined by a function of $\mathbf{y}_{1:k-1}$, i.e.,
   \[\E(\mathbf{y}_k^{\beta} \mid \mathbf{y}_{1:k-1}) = \mu_{\beta}(\mathbf{y}_{1:k-1}),\]
   where $\beta$ is any multi-index with $|\beta| = r$, there is \[m \geq r+1.\]
\end{theorem}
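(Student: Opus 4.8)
The plan is to make the remark preceding the statement --- that the Kalman-filter construction ``generalizes easily to higher-order moments' estimation'' --- precise, and then to read off exactly which moment orders such a generalized estimator consumes. First I would fix a multi-index $\beta$ with $|\beta| = r \ge 1$, abbreviate $\mathbf{z}_k := \mathbf{y}_k^{\beta}$, and note that the hypothesis means $\E(\mathbf{z}_k \mid \mathbf{y}_{1:k-1})$ coincides with a function $\mu_\beta(\mathbf{y}_{1:k-1})$ that the predictor can actually form recursively from $\mathcal{I}_m$ and the past data. The construction the statement alludes to is the minimum-variance one underlying \eqref{eq:kf_predict}--\eqref{eq:kf_update}: one propagates $\mu_\beta$ forward and, at each update, corrects it by a gain-weighted combination of the first-order innovations $\mathbf{y}_i - \E(\mathbf{y}_i \mid \mathbf{y}_{1:i-1})$, where the gain is built from the cross-covariances of $\mathbf{z}_k$ with those innovations together with the innovation covariances.

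Next I would do the moment bookkeeping. Forming that gain requires the cross-moments $\E(\mathbf{z}_k\, \mathbf{y}_i^{\top})$ for $1 \le i \le k-1$ to be finite, and each such entry is a moment of total order $|\beta| + 1 = r+1$ of the joint law of $(\mathbf{y}_k, \mathbf{y}_i)$ --- this is the binding requirement, strictly stronger than the order-$r$ existence of $\E(\mathbf{z}_k)$ itself. I would then pull this condition back through the dynamics \eqref{eq:sds}: expanding $\mathbf{y}_k$ and $\mathbf{y}_i$ in terms of the primitives $\mathbf{x}_0, \mathbf{w}_{1:n}, \mathbf{v}_{1:n}, \mathbf{u}_{1:n}$ and invoking the independence assumptions already granted in the model, every term of $\E(\mathbf{z}_k\,\mathbf{y}_i^{\top})$ is a product of individual primitive moments each of order at most $r+1$, so the gain is guaranteed to exist as soon as $\mathcal{I}_m$ holds with $m \ge r+1$. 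For necessity I would reuse the device of the example of Section IV: choosing one of the primitives to follow a Student-$t$ law of order exactly $r+1$ keeps $\mathcal{I}_r$ intact while sending the relevant order-$(r+1)$ moment --- and hence the quantity $\mu_\beta$ one is trying to determine --- to infinity. This is the obstruction that forces $m \ge r+1$.

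Assembling the pieces: the determinability of $\E(\mathbf{y}_k^\beta \mid \mathbf{y}_{1:k-1})$ as a function of $\mathbf{y}_{1:k-1}$ entails finiteness of the order-$(r+1)$ primitive moments, which $\mathcal{I}_m$ supplies only when $m \ge r+1$; hence $m \ge r+1$. The step I expect to be the main obstacle is upgrading the middle paragraph from ``the recursive minimum-variance estimator needs these moments'' to ``\emph{no} function-of-$\mathbf{y}_{1:k-1}$ estimator can determine the conditional moment without them'': one must rule out fortuitous cancellations by exhibiting two laws of the primitives that are both admissible under $\mathcal{I}_r$, respect the state-space structure of $\Phi$, and yet induce different (or non-integrable) conditional moments $\E(\mathbf{y}_k^\beta \mid \mathbf{y}_{1:k-1})$. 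Producing that pair explicitly, compatibly with the recursive dynamics, is where the real work lies; the chain-rule decomposition and the independence-based factorization of the moments are then routine.
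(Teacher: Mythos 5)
Your central observation --- that determining $\E(\mathbf{y}_k^{\beta}\mid\mathbf{y}_{1:k-1})$ forces the predictor to evaluate cross-moments of total order $|\beta|+1=r+1$, which $\mathcal{I}_m$ supplies only when $m\ge r+1$ --- is exactly the paper's, but the paper reaches it by a shorter and more general route than your Kalman-gain bookkeeping. It invokes the variational characterization $\E(\mathbf{y}_k^{\beta}\mid\mathbf{y}_{1:k-1})=\arg\min_{\rho}\E\|\mathbf{y}_k^{\beta}-\rho(\mathbf{y}_{1:k-1})\|_2^2$ over \emph{all} functions $\rho$ (not just the gain-weighted linear-innovation class), expands the objective as $\E\mathbf{y}_k^{2\beta}+\E\rho(\mathbf{y}_{1:k-1})^2-2\E\bigl(\mathbf{y}_k^{\beta}\rho(\mathbf{y}_{1:k-1})\bigr)$, and notes that the $\rho$-dependent cross term is a joint moment of order at least $r+1$, hence not contained in $\mathcal{I}_m$ when $m\le r$, so the minimizer cannot be uniquely determined. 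Quantifying over arbitrary $\rho$ rather than over one estimator family is precisely how the paper sidesteps the ``main obstacle'' you flag; that said, the paper's closing inference (``the cross term is not in $\mathcal{I}_m$, therefore $\rho$ cannot be determined'') is itself asserted rather than witnessed by two admissible laws inducing different conditional moments, so the construction you correctly identify as the real work is left implicit there as well. Your Student-$t$ device for exhibiting non-integrability is consistent with how the paper argues elsewhere (the Section IV example and the heuristic after the $m$-th moment robustness theorem) but is not part of this proof; if you rewrite your argument around the variational characterization with arbitrary $\rho$, you recover the paper's proof almost verbatim.
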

\begin{proof}
   Suppose $|\beta| + 1 > m$, according to the property of the conditional expectation, there is
   \begin{equation*}
      \begin{aligned}
         \E(\mathbf{y}_k^{\beta} \mid \mathbf{y}_{1:k-1}) = & \arg\min_{\rho} \E \|\mathbf{y}_k^\beta - \rho(\mathbf{y}_{1:k-1})\|_2^2.
      \end{aligned}
   \end{equation*}
   Notice $\E \|\mathbf{y}_k^\beta - \rho(\mathbf{y}_{1:k-1})\|_2^2 = \E \mathbf{y}_k^{2\beta} + \E\rho(\mathbf{y}_{1:k-1})^2- 2\E(\mathbf{y}_k^\beta \rho(\mathbf{y}_{1:k-1})) $, then the value of $\E(\mathbf{y}_k^\beta \rho(\mathbf{y}_{1:k-1}))$ is not included in the information set $\mathcal{I}_m$, which means that $\rho$ cannot be uniquely determined.
\end{proof}

\section{Numerical Simulations}
In this section, we apply the KF moment-based robust probabilistic predictor to predict the output trajectories of different linear SDSs. To statistically evaluate the prediction performance of an online predictor, we randomly simulate $100,000$ trajectories with each trajectory's length fixed to be $100$, calculate the predictor's log-likelihood performance on each trajectory, and visualize how the performances on these trajectories are distributed at each time step. Then, following the three-step procedure of designing a KF moment-based robust probabilistic predictor, we test the polynomial moment-based robust probabilistic predictors with the order ranging from $2$ to $0$ and the non-polynomial moment-based robust probabilistic predictors.
\subsection{Simulation Setup \& Visualization}
Consider the following linear system $\Phi$:
\begin{equation}\label{eq:sds_sim}
   \Phi_0:
   x_{k+1} = \begin{pmatrix*}
      1 & 1\\
      0 & 1
   \end{pmatrix*}x_k + w_k,~
   y_k     = \begin{pmatrix*}
      1 & 0\\
      0 & 1
   \end{pmatrix*}x_k + v_k,
\end{equation}
with the initial state $x_0 = [1, 2]^\top$.
Based on the structure of $\Phi$, we simulate two kinds of SDS: the first one $\Phi_1$ has $t(1)$ process noises, the second one $\Phi_2$ has standard Gaussian process noises, and both of them have standard Gaussian observation noises.

To run the KF moment-based robust probabilistic predictor in Algorithm \ref{alg:general-kf}, the prior knowledge of the first two moments of $W$ and $V$ are chosen as $\E(W) = \E(V) = \boldsymbol{0}$ and
\begin{equation*}
   \cov(W) = Q = \begin{pmatrix*}
      1 & 0 \\
      0   & 1
   \end{pmatrix*},~
   \cov(V) = R = \begin{pmatrix*}
      1 & 0 \\
      0   & 1
   \end{pmatrix*}.
\end{equation*}

To visualize how the predictors act on different trajectories generated from an SDS, we want to present the likelihood-time plot for each trajectory on a single figure. However, there are two difficulties: first, since the trajectory number is large, explicitly drawing each trajectory's likelihood makes the figure both messy and short of information; second, if an unrobust predictor is used, there are trajectories that decrease the likelihood too much (beyond the numeric limitation of a 64-bit PC), which cannot be visualized by a line plot. For the first problem, we use a heat map to show how the likelihoods of different trajectories are distributed at each time step and use the mean of trajectories' likelihood to show how the expected performance changed as the time step goes from $0$ to $100$. In the heat map, each grid's color visualizes the frequency of trajectories that possess prediction performance located in the according interval. For the second problem, we add a right y-axis to denote the proportion of trajectories that have $-\infty$ likelihood and show how this rate grows as time step increases.

\subsection{Results of $\Phi_1$}
For the SDS $\Phi_1$, an appropriate information set is $\mathcal{I}_0$, since the first moment of $t(1)$ does not exist. If the information set's order is overestimated by $2$ or $1$, the accordingly designed KF moment-based predictor is no longer robust, and the prediction performance will be quite bad. As the second-order predictor in Fig. \ref{sim:dy1:guassian} shows, i) the proportion of trajectories with $-\infty$ log-likelihood continues growing to $80\%$ from time $0$ to $100$; ii) for those trajectories with finite log-likelihood, their mean decreases fast and their variance is quite large such that many outliers exist.  The first-order predictor in Fig. \ref{sim:dy1:laplace} presents similar features, but the growing speed of $-\infty$ proportion is much slower, and those trajectories with finite log-likelihood have larger means and smaller variances.

On the contrary, the $t(1)$ and $t(2)$ based predictor keep $0\%$ proportion of $-\infty$ likelihood. Moreover, their trajectories's means are significantly larger and their variances are significantly smaller, as shown in Fig. \ref{sim:dy1:t1} and Fig. \ref{sim:dy1:t2}. Of course, $t(1)$ predictor outweighs $t(2)$ predictor a little bit in the mean and variance of likelihoods, and this makes sense since the trajectories are sampled from $\Phi_1$, where the process noises are subjected to $t(1)$.

\subsection{Results of $\Phi_2$}
For the SDS $\Phi_2$, an appropriate information is $\mathcal{I}_2$, since the first two moments of the noises all exist. Therefore, using any moment-based robust probabilistic predictors with an order of no more than $2$ will be robust. As Fig. \ref{sim:dy2} shows, each predictor is robust with $0\%$ proportion of $-\infty$ trajectories. The second-order moment robust probabilistic predictor performs best with the largest log-likelihood, and as the order decreases, the performance has a mild decrease. However, those conservative robust probabilistic predictors do not have too much performance degradation, and the $t(2)$ and $t(1)$ predictor even has smaller variances.

\section{Conclusion}
In conclusion, this paper addresses the issue of robustifying probabilistic predictions for SDSs, which inherently encompasses uncertainty quantification through distribution predictions. We introduce the concept of likelihood functional as a generalized measure of likelihood function to evaluate the performance of probabilistic predictors, which is proved to be a proper scoring rule. By leveraging this metric, we propose a comprehensive framework to assess the robustness of predictors and investigate the impact of different information sets on robustness. Our framework enables the design of robust probabilistic predictors through functional optimization problems tailored to specific information sets. Notably, we develop a class of moment-based optimal robust probabilistic predictors and present a practical implementation algorithm utilizing the Kalman filter. Through extensive numerical simulations, we provide detailed insights and validation of our findings.

Overall, this research contributes to the development of robust probabilistic predictors for SDSs, advancing our understanding and capabilities in making robust predictions with UQ. Future works include developing more learning-based online robust probabilistic predictors to further improve the prediction performance.

\begin{appendices}
   \section{Proof of Theorem \ref{thm:evl-1}}\label{app:thm:evl-1}
   To begin with, the expected log-likelihood functional can be decomposed as
   \begin{equation*}
      \begin{aligned}
         \mathcal{L}(\mathscr{F}, \mathbf{y}_{1:n}) \overset{(\mathrm{i})}{=} & \E_{y_{1:n}}\log \hat{p}_{\mathbf{y}_{1:n}}(y_{1:n})                                                \\
         \overset{(\mathrm{ii})}{=}                                           & \E_{y_{1:n}} \sum_{k=1}^{n}\log \hat{p}_{\mathbf{y}_k \mid \mathbf{y}_{1:k-1}}(y_k\mid y_{1:k-1})   \\
         \overset{(\mathrm{iii})}{=}                                          & \sum_{k=1}^{n} \E_{y_{1:n}} \log \hat{p}_{\mathbf{y}_k \mid \mathbf{y}_{1:k-1}}(y_k\mid y_{1:k-1})  \\
         \overset{(\mathrm{iv})}{=}                                           & \sum_{k=1}^{n} \E_{y_{1:k}} \log \hat{p}_{\mathbf{y}_k \mid \mathbf{y}_{1:k-1}}(y_k\mid y_{1:k-1}),
      \end{aligned}
   \end{equation*}
   where $\mathrm{(i)}$ follows from the definition of $\mathcal{L}(\mathscr{F}, \mathbf{y}_{1:n})$, $\mathrm{(ii)}$ follows from the chain rule equation \eqref{eq:chain}, $\mathrm{(iii)}$ exchange the expectation with the finite summation, and $\mathrm{(iv)}$ holds because  $\log \hat{p}_{\mathbf{y}_k \mid \mathbf{y}_{1:k-1}}(y_k\mid y_{1:k-1})$ only depends on $y_{1:k}$. Furthermore, we have
   \begin{equation*}
      \begin{aligned}
                                     & \E_{y_{1:k}} \log \hat{p}_{\mathbf{y}_k \mid \mathbf{y}_{1:k-1}}(y_k\mid y_{1:k-1})                                                      \\
         \overset{\mathrm{(i)}}{=}   & \E_{y_{1:k-1}}\!\left\{\E_{y_k}\left[\log \hat{p}_{\mathbf{y}_k \mid \mathbf{y}_{1:k-1}}(y_k\mid y_{1:k-1})\mid y_{1:k-1}\right]\right\} \\
         \overset{\mathrm{(ii)}}{=}  & \E_{y_{1:k-1}}\!\left\{-\mathrm{H}(p_{\mathbf{y}_k\mid \mathbf{y}_{1:k-1}})\!-\! D_{KL}\!\left(q_k\Vert \hat{q}_k\right) \right\}        \\
         \overset{\mathrm{(iii)}}{=} & \!\!-\!\mathrm{H}(\mathbf{y}_k\mid \mathbf{y}_{1:k-1}) \!-\! \E_{y_{1:k-1}}D_{KL}\!\left(q_k\Vert \hat{q}_k\right),
      \end{aligned}
   \end{equation*}
   where $\mathrm{(i)}$ follows from the tower property of conditional expectation, $\mathrm{(ii)}$ follows from the definition of cross entropy and its decomposition into the sum of differential entropy and KL-divergence, $\mathrm{(iii)}$ holds according to the definition conditional entropy. Finally, according to the chain rule that decomposes the joint entropy into conditional entropies, 
   \begin{equation*}
      \begin{aligned}
           & \sum_{k=1}^{n} \E_{y_{1:k}} \log \hat{p}_{\mathbf{y}_k \mid \mathbf{y}_{1:k-1}}(y_k\mid y_{1:k-1})                           \\
         = & \sum_{k=1}^{n} \!-\mathrm{H}(\mathbf{y}_k\mid \mathbf{y}_{1:k-1}) \!-\!\E_{y_{1:k-1}}D_{KL}\!\left(q_k\Vert \hat{q}_k\right) \\
         = & \!-\mathrm{H}(\mathbf{y}_{1:n}) \!-\! \sum_{k=1}^{n}\E_{y_{1:k-1}} D_{KL}\!\left(q_k\Vert \hat{q}_k\right),
      \end{aligned}
   \end{equation*}
   and the proof is completed.

   \section{Proof of Theorem \ref{lem:finite-condition}}\label{app:lem:finite-condition}
   The expected log-likelihood can be decomposed as the sum of one-step expected log-likelihood as follows,
   \begin{equation*}
      \begin{aligned}
           & \mathcal{L}(\mathcal{F}, \mathbf{y}_{1:n})
         = \E_{y_{1:n}}\sum_{k=1}^{n} \log \hat{p}_{\mathbf{y}_k \mid \mathbf{y}_{1:k-1}}(y_k \mid y_{1:k-1})                                                                                                        \\
         = & \sum_{k=1}^{n}\E_{y_{1:k}} \log \hat{p}_{\mathbf{y}_k \mid \mathbf{y}_{1:k-1}}(y_k \mid y_{1:k-1})                                                                                                      \\
         = & \sum_{k=1}^{n}\E_{y_{1:k\!-\!1}}\!\!\int\!\! p_{\mathbf{y}_k \mid \mathbf{y}_{1:k\!-\!1}}(s\!\mid\! y_{1:k-1}) \log \hat{p}_{\mathbf{y}_k \mid \mathbf{y}_{1:k\!-\!1}}(s \!\mid\! y_{1:k\!-\!1}) \dd s.
      \end{aligned}
   \end{equation*}
   Additionally, the effect of $\mathbf{u}_k$ is to control the conditional distribution $p_{\mathbf{y}_{k+1} \mid \mathbf{y}_{1:k}}$ with the constraint that the $m$-th order moments of $\mathbf{u}_k$ exist.

   On the one hand, if there is \[\min\limits_{p}\int\!\! p_{\mathbf{y}_k \mid \mathbf{y}_{1:k\!-\!1}}(s\!\mid\! y_{1:k-1}) \log \hat{p}_{\mathbf{y}_k \mid \mathbf{y}_{1:k\!-\!1}}(s \!\mid\! y_{1:k\!-\!1}) \dd s > -\infty\] holding for each trajectory $y_{1:k-1}$, then the expectation over $\mathbf{y}_{1:k-1}$ is guaranteed to be lower bounded, and the sum over $k$ from $1$ to $n$ is also lower bounded.

   On the other hand, if there exists a trajectory set $A$ with $\pr(\mathbf{y}_{1:k-1} \in A) > 0$ such that $\forall y_{1:k-1} \in A$ \[\min\limits_{p}\int\!\! p_{\mathbf{y}_k \mid \mathbf{y}_{1:k\!-\!1}}(s\!\mid\! y_{1:k-1}) \log \hat{p}_{\mathbf{y}_k \mid \mathbf{y}_{1:k\!-\!1}}(s \!\mid\! y_{1:k-1}) \dd s = -\infty,\] then we have
   \begin{equation*}
      \begin{aligned}
          & \min\limits_{p} \E_{y_{1:k-1}}\int\!\! p_{\mathbf{y}_k \mid \mathbf{y}_{1:k\!-\!1}}(s\!\mid\! y_{1:k-1}) \log \hat{p}_{\mathbf{y}_k \mid \mathbf{y}_{1:k\!-\!1}}(s \!\mid\! y_{1:k-1}) \dd s \\
          & = -\infty,
      \end{aligned}
   \end{equation*}
   and the proof is completed.

   \section{Proof of Theorem \ref{thm:robust-form}}\label{app:thm:robust-form}
   We form an augmented variational problem for \eqref{eq:maxmin-aux} by defining
   \begin{equation*}
      \begin{aligned}
         J[p, \lambda] = & \int p_{\mathbf{y}_k \mid \mathbf{y}_{1:k-1}}(s \mid y_{1:k-1}) \log \hat{p}_{\mathbf{y}_k \mid \mathbf{y}_{1:k-1}}(s \mid y_{1:k-1}) \dd s                \\
                         & -\sum_{i=0}^m\sum_{|\alpha|=i} \lambda_\alpha \left[\int s^\alpha p_{\mathbf{y}_k \mid \mathbf{y}_{1:k-1}}(s\!\mid\! y_{1:k-1})\dd s - \mu_\alpha \right],
      \end{aligned}
   \end{equation*}
   where $\lambda_\alpha\in \R$ are the constant Lagrange multipliers corresponding to the integral constraints.

   Doing variation on $J[p, \lambda]$ with respect to $p_{\mathbf{y}_k \mid \mathbf{y}_{1:k-1}}$, the Euler-Lagrange equation admits a necessary condition that the optimal value is bounded below:
   \begin{equation*}
      \begin{aligned}
         \log p_{\mathbf{y}_k \mid \mathbf{y}_{1:k-1}}(\cdot \mid y_{1:k-1}) - \sum_{i=0}^{m} \sum_{|\alpha|=i} \lambda_\alpha s^\alpha = 0.
      \end{aligned}
   \end{equation*}

   Substituting this equation to the optimization objective, it follows that
   \begin{equation*}
      \begin{aligned}
           & \int p_{\mathbf{y}_k \!\mid \mathbf{y}_{1:k-1}}(s\!\mid\! y_{1:k-1})\log \hat{p}_{\mathbf{y}_k \mid \mathbf{y}_{1:k-1}}(s\!\mid\! y_{1:k-1}) \dd s \\
         = & \int p_{\mathbf{y}_k \!\mid \mathbf{y}_{1:k-1}}(s\!\mid\! y_{1:k-1})\sum_{i=0}^{m} \sum_{|\alpha|=i} \lambda_\alpha s^\alpha \dd s                 \\
         = & \sum_{i=0}^{m} \sum_{|\alpha|=i}\int p_{\mathbf{y}_k \!\mid \mathbf{y}_{1:k-1}}(s\!\mid\! y_{1:k-1}) \lambda_\alpha s^\alpha \dd s                 \\
         = & \sum_{i=0}^{m} \sum_{|\alpha|=i} \lambda_\alpha \mu_\alpha(\mathbf{y}_k \mid y_{1:k-1}).                                                           \\
      \end{aligned}
   \end{equation*}
   Since $|\mu_\alpha(\mathbf{y}_k \mid y_{1:k-1})| < \infty$ for $y_{1:k-1}\in\R^{(k-1)d_y}$ a.s., we have verified that the optimal value to \eqref{eq:maxmin-aux} is indeed bounded below, and the proof is completed.

   \section{Proof of Theorem \ref{thm:maxmin-2}}\label{app:thm:maxmin-2}
   Considering Theorem \ref{thm:robust-form} and the information that $\operatorname{supp}(\mathbf{y}_k)$ is both upper and lower unbounded, we have the optimal second-moment robust probabilistic predictor $\hat{p}_{\mathbf{y}_k \mid \mathbf{y}_{1:k-1}}^\star$ should be a multivariate Gaussian distribution:
   \[ (2 \pi)^{-d_y / 2} \operatorname{det}(\hat{\Sigma})^{-1 / 2} e^{\left(-\frac{1}{2}(s-\hat{z})^{\top} \hat{\Sigma}^{-1}(s-\hat{z})\right)},\]
   and there is
   \begin{equation*}
      \begin{aligned}
                                       & \int\! p_{\mathbf{y}_k \!\mid \mathbf{y}_{1:k-1}}(s\!\mid\! y_{1:k-1})\log \hat{p}_{\mathbf{y}_k \mid \mathbf{y}_{1:k-1}}(s\!\mid\! y_{1:k-1}) \dd s                    \\
         =                             & -\frac{d_y}{2}\log(2\pi) - \frac{1}{2}\log\operatorname{det}(\hat{\Sigma})                                                                                              \\
                                       & + \int\! p_{\mathbf{y}_k \!\mid \mathbf{y}_{1:k-1}}(s\!\mid\! y_{1:k-1}) \left(-\frac{1}{2}(s-\hat{z})^{\top} \hat{\Sigma}^{-1}(s-\hat{z})\right)\dd s                          \\
         =                             & -\frac{d_y}{2}\log(2\pi) - \frac{1}{2}\log\operatorname{det}(\hat{\Sigma})                                                                                              \\
                                       & + \int\! p_{\mathbf{y}_k \!\mid \mathbf{y}_{1:k-1}}(s\!\mid\! y_{1:k-1}) \left(-\frac{1}{2}(s-z + z - \hat{z})^{\top}\right.                                                \\
                                       & \qquad \left.\hat{\Sigma}^{-1}(s-z + z-\hat{z})\right)\dd s                                                                                                                       \\
         \overset{\mathrm{(i)}}{=}     & \!-\!\frac{d_y}{2}\log(2\pi) \!-\! \frac{1}{2}\log\operatorname{det}(\hat{\Sigma}) \!-\! \frac{1}{2}\langle \Sigma,\hat{\Sigma}^{-1}\rangle \!-\! \|z \!-\! \hat{z}\|_{\hat{\Sigma}^{-1}}^2 \\
         \overset{\mathrm{(ii)}}{\leq} & \!-\!\frac{d_y}{2}\log(2\pi) \!-\! \frac{1}{2}\log\operatorname{det}(\hat{\Sigma}) \!-\! \frac{1}{2}\langle \Sigma,\hat{\Sigma}^{-1}\rangle
      \end{aligned}
   \end{equation*}
   where $\mathrm{(i)}$ holds according to the definition of the inner product of matrices, and the equation of $\mathrm{(ii)}$ holds if and only if $\hat{z} = z$.

   Now we can do differential on the objective function with respect to $\hat{\Sigma}^{-1}$, such that
   \begin{equation*}
      \begin{aligned}
           & \frac{\dd }{\dd \hat{\Sigma}^{-1}} \left\{ -\frac{d_y}{2}\log(2\pi) - \frac{1}{2}\log\operatorname{det}(\hat{\Sigma}) \!-\! \frac{1}{2}\langle \Sigma,\hat{\Sigma}^{-1}\rangle \right\} \\
         = & \frac{1}{2} (\hat{\Sigma} - \Sigma).
      \end{aligned}
   \end{equation*}
   It follows that the objective function is maximized when $\hat{\Sigma} = \Sigma$. Substituting $\hat{\Sigma}$ and $\hat{z}$ by the above calculations, the optimal conditional distribution is 
   \begin{align*}
      \hat{p}_{\mathbf{y}_{k} \mid \mathbf{y}_{1:k-1}}^\star(s \mid y_{1:k-1})= & (2 \pi)^{-{d_y\over 2}} \operatorname{det}(\Sigma)^{-{1\over 2}}   \\
                                                                                & \times e^{\left(-\frac{1}{2}(s-z)^{\top} \Sigma^{-1}(s-z)\right)}.
   \end{align*}

   \section{Proof of Theorem \ref{thm:maxmin-1}}\label{app:thm:maxmin-1}
   Solving the optimal first-moment robust probabilistic predictor is equivalent to doing the following optimization problem for each $i \in \{1, \ldots, d_y\}$:
   \begin{align}
                  & \max\limits_{\lambda_0, \lambda_1} \lambda_1^{(i)} z^{(i)} + \lambda_0^{(i)} \nonumber \\
      \text{s.t.} &
      \begin{aligned}
          & \int_{\underline{y}_k^{(i)}}^{\bar{y}_k^{(i)}} e^{\lambda_1^{(i)} s^{(i)} + \lambda_0^{(i)} } \dd s^{(i)} = 1.
      \end{aligned}
   \end{align}

   i) When $\underline{y}_k^{(i)} > -\infty$ and $\bar{y}_k^{(i)} = \infty$, there is
   \begin{equation*}
      \begin{aligned}
                     & e^{\lambda_0^{(i)}}(0 - e^{\lambda_1^{(i)}\underline{y}_k^{(i)}}) = \lambda_1^{(i)} \\
         \Rightarrow & \lambda_0^{(i)} = \log(-\lambda_1^{(i)}) - \lambda_1^{(i)}\underline{y}_k^{(i)}.
      \end{aligned}
   \end{equation*}
   Substituting this into the objective function, we have
   \begin{equation*}
      \begin{aligned}
           & \lambda_1^{(i)} z^{(i)} + \lambda_0^{(i)}                                  \\
         = & \lambda_1^{(i)}(z^{(i)} - \underline{y}_k^{(i)}) + \log(-\lambda_1^{(i)}),
      \end{aligned}
   \end{equation*}
   which is maximized when $\lambda_1^{(i)} = (\underline{y}_k^{(i)} - z^{(i)})^{-1}$. Then $\lambda_0^{(i)} = -\log\left(z^{(i)} - \underline{y}_k^{(i)}\right) + \underline{y}_k^{(i)}(z^{(i)} - \underline{y}_k^{(i)})^{-1}$.

   ii) When $\underline{y}_k^{(i)} = -\infty$ and $\bar{y}_k^{(i)} < \infty$, there is
   \begin{equation*}
      \begin{aligned}
                     & e^{\lambda_0^{(i)}}(e^{\lambda_1^{(i)}\bar{y}_k^{(i)}} - 0) = \lambda_1^{(i)} \\
         \Rightarrow & \lambda_0^{(i)} = \log(\lambda_1^{(i)}) - \lambda_1^{(i)}\bar{y}_k^{(i)}.
      \end{aligned}
   \end{equation*}
   Substituting this into the objective function, we have
   \begin{equation*}
      \begin{aligned}
           & \lambda_1^{(i)} z^{(i)} + \lambda_0^{(i)}                                  \\
         = & \lambda_1^{(i)}(z^{(i)} - \underline{y}_k^{(i)}) + \log(-\lambda_1^{(i)}),
      \end{aligned}
   \end{equation*}
   which is maximized when $\lambda_1^{(i)} = (\bar{y}_k^{(i)} - z^{(i)})^{-1}$.Then $\lambda_0^{(i)} = -\log\left(\bar{y}_k^{(i)} - z^{(i)}\right) + \bar{y}_k^{(i)}(z^{(i)} - \bar{y}_k^{(i)})^{-1}$.

   ii) When $\underline{y}_k^{(i)} > -\infty$ and $\bar{y}_k^{(i)} < \infty$, the constraint is equivalent to
   \[\lambda_1^{(i)}e^{-\lambda_0^{(i)}} = e^{\bar{y}_k^{(i)}\lambda_1^{(i)}} - e^{\underline{y}_k^{(i)}\lambda_1^{(i)}}.\]
   The objective function can be replaced by $e^{ \lambda_1^{(i)} z^{(i)} + \lambda_0^{(i)}}$. Then
   \begin{equation*}
      \lambda_1^{(i)} \!=\! \arg\max_x \frac{xe^{ux}}{e^{bx} - e^{ax}} \!=\! \arg\min_x (e^{(b-u)x} - e^{(a-u)x})x^{-1},
   \end{equation*}
   where $b = \bar{y}_k^{(i)}, a = \underline{y}_k^{(i)}, u = z^{(i)}$ for the simplicity of notation. Let \[l(x) := (e^{(b-u)x} - e^{(a-u)x})x^{-1},\] then \[l^\prime(x) = \{[(b-u)x-1]e^{(b-u)x} \!-\! [(a-u)x-1]e^{(a-u)x}\}x^{-2}.\] Let \[J(x) = [(b-u)x-1]e^{(b-u)x} - [(a-u)x-1]e^{(a-u)x},\] then $J^\prime(x) = xe^{(a-u)x}[(b-u)^2e^{(b-a)x} - (a-u)^2]$. Moreover, we get that $l(x)$ is maximized at the non-zero solution to the equation $J(x) = 0$.

   \section{Proof of Lemma \ref{lem:nonpoly-moment}}\label{app:lem:nonpoly-moment}
   (i) Given a random variable $X$, the fact that its expectation exists actually yields that $\mathbb{E}\left[X^{+}\right]<\infty \text { and } \mathbb{E}\left[X^{-}\right]<\infty$, where $X^{+}=\max \{X, 0\}$ and $X^{-}=-\min \{X, 0\}$. Therefore $\E|X| = \E X^{+} + \E X^{-} < \infty$. Now if $X \sim p_{\mathbf{y}_k \mid \mathbf{y}_{1:k-1}}(\cdot \mid t)$, the information set $\mathcal{I}_1$ guarantees that $\E X < \infty$ and therefore $\E |X| < \infty$.

   (ii) When $\|s\| > 2$, there is $\log(1+\|s\|^2) < \|s\|$, then
   \begin{equation*}
      \begin{aligned}
           & \int \log(1 + s^\top s)\, p_{\mathbf{y}_k \mid \mathbf{y}_{1:k-1}}(s\mid y_{1:k-1}) \dd s               \\
         = & \int_{\|s\|\leq 2} \log(1 + s^\top s)\, p_{\mathbf{y}_k \mid \mathbf{y}_{1:k-1}}(s\mid y_{1:k-1}) \dd s \\
           & + \int_{\|s\| > 2} \log(1 + s^\top s)\, p_{\mathbf{y}_k \mid \mathbf{y}_{1:k-1}}(s\mid y_{1:k-1}) \dd s \\
         < & \log(5) + \E \|s\| < \infty,
      \end{aligned}
   \end{equation*}
   and the proof is completed.

   \section{Proof of Lemma \ref{lem:kf_pdf}} \label{app:lem:kf_pdf}
   Under the assumption of linear Gaussian system, we have the prediction error is subjected to the Gaussian distribution, and its explicit distribution is given as
   \[\mathbf{y}_k - H_k\mathbf{{x}}_k^- \sim \mathcal{N}(0, H_k{P}^-_kH_k^\top + {R}_k).\]
   Notice that when the observations $y_{1:k-1}$ and control inputs $u_{0:k-1}$ are given, the random variable $\mathbf{{x}}_k^-$ has a fixed value ${x}_k^-$, it follows that
   \begin{equation*}
      \begin{aligned}
                     & p_{\mathbf{y}_k- H_k\mathbf{{x}}_k^-\mid \mathbf{y}_{1:k-1}}(\cdot \mid y_{1:k-1}) \sim \mathcal{N}(0, H_k{P}^-_kH_k^\top + {R}_k) \\
         \Rightarrow & p_{\mathbf{y}_k\mid \mathbf{y}_{1:k-1}}(\cdot \mid y_{1:k-1}) \sim \mathcal{N}(H_k{x}^-_k, H_k{P}^-_kH_k^\top + {R}_k),
      \end{aligned}
   \end{equation*}
   and the proof is completed.

   \section{Proof of Theorem \ref{thm:evl-gauss}} \label{app:thm:evl-gauss}
   According to Lemma \ref{lem:kf_pdf}, there is \[p_{\mathbf{y}_k\mid \mathbf{y}_{1:k-1}}(\cdot \mid y_{1:k-1}) \sim \mathcal{N}(H_k{x}^-_k, \Sigma_k).\] Then, according to the definition of the KF-based predictor, there is \[\hat{p}_{\mathbf{y}_k\mid \mathbf{y}_{1:k-1}}(\cdot \mid y_{1:k-1}) \sim \mathcal{N}(H_k{\hat{x}}^-_k, \hat{\Sigma}_k).\] Next, during the iteration of the Kalman filter, we have
   \begin{equation*}
      \begin{aligned}
         \E(\mathbf{x}_{k+1} - \mathbf{\hat{x}}_{k+1}^-) = & F_k(I-K_kH_k)\E(\mathbf{x}_k - \mathbf{\hat{x}}_k^-)                    \\
         =                                                 & \left[\prod_{i=1}^{k}F_{i}(I-K_{i}H_{i})\right] F_0(x_0 - \hat{x}_0^+).
      \end{aligned}
   \end{equation*}
   It follows that
   \begin{equation*}
      \begin{aligned}
         H_kx_k^- - H_k\hat{x}_k^- & = H_k\E(\mathbf{x}_k^-) - H_k\E(\mathbf{\hat{x}}_k^-)                        \\
                                   & \overset{\mathrm{(i)}}{=} H_k\E(\mathbf{x}_k) - H_k\E(\mathbf{\hat{x}}_k^-)  \\
                                   & \overset{\mathrm{(ii)}}{=} H_k\E(\mathbf{x}_k) - (H_k\E(\mathbf{x}_k) - e_k) \\
                                   & = e_k,
      \end{aligned}
   \end{equation*}
   where $\mathrm{(i)}$ holds because $\E(\mathbf{x}_k - \mathbf{x}^-_k) = 0$, $\mathrm{(ii)}$ holds because $H_k\E(\mathbf{x}_{k+1} - \mathbf{\hat{x}}_{k+1}^-) = e_k$. Then, there is $g^u_k \sim \mathcal{N}(e_k, \Sigma_k)$ and $\hat{g}^u_k \sim \mathcal{N}(0, \hat{\Sigma}_k)$, and the expected likelihood functional can be evaluated as
   \begin{equation*}
      \begin{aligned}
                                     & \mathcal{L}(\mathscr{F}, \mathbf{y}_{1:n}) \overset{\mathrm{(i)}}{=}  -\mathrm{H}(\mathbf{y}_{1:n})-\sum_{k=1}^{n} D_{KL}\left(g_k^u(\cdot)\Vert \hat{g}^u_k(\cdot)\right) \\
         \overset{\mathrm{(ii)}}{=}  & -\sum_{k=1}^{n}  \mathrm{H}(\mathbf{y}_k \mid \mathbf{x}_k, \mathbf{y}_{1:k-1}) + \mathrm{H}(\mathbf{x}_k \mid \mathbf{x}_{k-1}, \mathbf{y}_{k-1})                         \\
                                     & \qquad + \mathrm{H}(\mathbf{x}_{k-1} \mid \mathbf{y}_{1:k-1}) + D_{KL}\left(g_k^u(\cdot)\Vert \hat{g}^u_k(\cdot)\right)                                                    \\
         \overset{\mathrm{(iii)}}{=} & -\sum_{k=1}^{n}  \mathrm{H}(\mathbf{y}_k \mid \mathbf{x}_k) + \mathrm{H}(\mathbf{x}_k \mid \mathbf{x}_{k-1})                                                               \\
                                     & \qquad + \mathrm{H}(\mathbf{x}_{k-1} \mid \mathbf{y}_{1:k-1}) + D_{KL}\left(g_k^u(\cdot)\Vert \hat{g}^u_k(\cdot)\right),
      \end{aligned}
   \end{equation*}
   where $\mathrm{(i)}$ follows from Theorem \ref{thm:evl-1}, $\mathrm{(ii)}$ follows from the chain rule of conditional entropy, and $\mathrm{(iii)}$ holds because of the Markov property of the stochastic dynamical system. Then, we can calculate each part separately:
   \begin{equation*}
      \begin{aligned}
          & \mathrm{H}(\mathbf{y}_k \mid \mathbf{x}_k) =                     \frac{d_y}{2}(\ln(2\pi) + 1)+\frac{1}{2}\ln|R_k|,                                                                                                            \\
          & \mathrm{H}(\mathbf{x}_k \mid \mathbf{x}_{k-1}) =                 \frac{d_x}{2}(\ln(2\pi) + 1)+\frac{1}{2}\ln|Q_{k-1}|,                                                                                                        \\
          & \mathrm{H}(\mathbf{x}_{k-1} \mid \mathbf{y}_{1:k-1})=            \frac{d_x}{2}(\ln(2\pi) + 1)+\frac{1}{2}\ln|P_{k-1}^+|,                                                                                                      \\
          & D_{KL}\!\left(g_k^u(\cdot)\Vert \hat{g}^u_k(\cdot)\right) \!=\!  \frac{1}{2}\!\!\left[e_k^\top\hat{\Sigma}_k^{-1}e_k \!+\! \tr(\hat{\Sigma}_k^{-1}\Sigma_k) \!-\! \ln\frac{|\Sigma_k|}{|\hat{\Sigma}_k|} \!-\!d_y\!  \right].
      \end{aligned}
   \end{equation*}
   Substituting above expressions into $\mathcal{L}(\mathscr{F}, \mathbf{y}_{1:n})$, it equals
   \begin{equation}
      \begin{aligned}
         -\frac{1}{2}\sum_{k=1}^{n} & \left\{e_k^\top\hat{\Sigma}_k^{-1}e_k\!+\!\tr[\hat{\Sigma}_k^{-1}\Sigma_k]\!+\!\ln |\hat{\Sigma}_k||R_k||Q_{k-1}||P_{k-1}^+|\right. \\
                                    & -\ln|\Sigma_k| +\left.(2\ln(2\pi)+2)d_x + \ln(2\pi)d_y\right\},
      \end{aligned}
   \end{equation}
   and the proof is completed.
\end{appendices}

\bibliographystyle{IEEEtran}
\bibliography{ref}
\begin{IEEEbiographynophoto}{Tao Xu}
   (S'22) received the B.S. degree in the School of Mathematical Sciences from Shanghai Jiao Tong University (SJTU), Shanghai, China. He is currently working toward the Ph.D. degree with the Department of Automation, SJTU. His research interests mainly include differential game, prediction and learning, optimization and control in stochastic dynamical system.
\end{IEEEbiographynophoto}

\begin{IEEEbiographynophoto}{Jianping He}
   (SM'19) is currently an associate professor in the Department of Automation at Shanghai Jiao Tong University. He received the Ph.D. degree in control science and engineering from Zhejiang University, Hangzhou, China, in 2013, and had been a research fellow in the Department of Electrical and Computer Engineering at University of Victoria, Canada, from Dec. 2013 to Mar. 2017. His research interests mainly include the distributed learning, control and optimization, security and privacy in network systems.

   Dr. He serves as an Associate Editor for IEEE Tran. Control of Network Systems, IEEE Open Journal of Vehicular Technology, and KSII Trans. Internet and Information Systems. He was also a Guest Editor of IEEE TAC, International Journal of Robust and Nonlinear Control, etc. He was the winner of Outstanding Thesis Award, Chinese Association of Automation, 2015. He received the best paper award from IEEE WCSP'17, the best conference paper award from IEEE PESGM'17, and was a finalist for the best student paper award from IEEE ICCA'17, and the finalist best conference paper award from IEEE VTC20-FALL.
\end{IEEEbiographynophoto}
\end{document}